\documentclass[reqno,a4paper,12pt]{amsart}
\usepackage{bbm}
\usepackage{mathtools}

\usepackage{amssymb}
\usepackage[usenames,dvipsnames]{xcolor}
\usepackage{hyperref}
\definecolor{tianyi}{HTML}{1A237E}
\definecolor{secondary}{HTML}{1B5E20}
\definecolor{doi_link}{HTML}{2E7D32}
\definecolor{DarkR}{HTML}{B71C1C}
\hypersetup{
    pdftitle = {The Berry-Ess\'{e}en Upper Bounds of Vasicek Model Estimators},
    pdfauthor = {Yumin Cheng},
    pdfsubject={Stochastic Analysis},
    pdfkeywords={Vasicek model, Malliavin calculus, Central limit theorem, Berry-Ess\'{e}en upper bounds},
    pdfcreator = {Texlive 2021},
    pdfproducer = {Texlive 2021},
    bookmarksopen=true,
    colorlinks=true,
    linkcolor=tianyi,
    anchorcolor=secondary,
    citecolor=tianyi,
    urlcolor=secondary,
    bookmarksnumbered =true,
    pdfdisplaydoctitle = true,
}
\usepackage{cleveref}
\usepackage{mathrsfs}
\usepackage[numbers,sort]{natbib}
\newtheorem{theorem}{Theorem}
\newtheorem{lemma}[theorem]{Lemma}
\newtheorem{proposition}[theorem]{Proposition}

\newtheorem{definition}{Definition}
\newtheorem{hypothesis}{Hypothesis}

\newtheorem*{remarkn}{Remark}

\usepackage{geometry}
\geometry{a4paper,left=3.14cm,right=3.14cm,top=2.5cm,bottom=2.5cm}
\def\bE{Berry-Ess\'{e}en\ }

\newcommand{\norm}[1]{\left\Vert #1\right\Vert}
\newcommand{\normh}[1]{\left\Vert #1\right\Vert_{\mathfrak{H}}}
\newcommand{\abs}[1]{\left\vert #1\right\vert}

\def\Rnum{\mathbb{R}}

\def\mE{\mathbb{E}}

\def\m{\mbox}
\def\Pro{\mathbb{P}}

\def\me{\mathrm{e}}
\def\kls{\hat{k}_{LS}}
\def\muls{\hat{\mu}_{LS}}
\makeatletter
\newcommand\dlmu[2][3.5cm]{\hskip1pt\underline{\hb@xt@ #1{\hss#2\hss}}\hskip3pt}
\makeatother
\newcommand*{\dif}{\mathop{}\!\mathrm{d}}
\newcommand\distribution{\stackrel{law}{\longrightarrow}}
\newcommand{\RomanNum}[1]{\uppercase\expandafter{\romannumeral #1\relax}}
\newcommand{\etal}{{\text{et al.}}}
\newcommand{\iproct}[2]{\left\langle #1,#2\right\rangle}

\begin{document}
\title{The Berry-Ess\'{e}en Upper Bounds of Vasicek Model Estimators}
\author{Yong Chen}
    \address{School of Mathematics and Statistics, Jiangxi Normal University, Nanchang, 330022, Jiangxi, China}\email{zhishi@pku.org.cn}
\author{Yumin Cheng*}
    \address{School of Mathematics and Statistics, Jiangxi Normal University, Nanchang, 330022, Jiangxi, China}\email{chengym@jxnu.edu.cn}

\subjclass[2020]{60H07,60G15,60G22}
\keywords{Vasicek model, Malliavin calculus, Central limit theorem, \bE upper bounds}
\begin{abstract}
    The \bE upper bounds of moment estimators and least squares estimators of the mean and drift coefficients in Vasicek models driven by general Gaussian processes are studied. When studying the parameter estimation problem of Ornstein-Uhlenbeck (OU) process driven by fractional Brownian motion, the commonly used methods are mainly given by Kim and Park, they show the upper bound of Kolmogorov distance between the distribution of the ratio of two double Wiener-It\^{o} stochastic integrals and the Normal distribution. The main innovation in this paper is extending the above ratio process, that is to say, the numerator and denominator respectively contain triple Wiener-It\^{o} stochastic integrals at most. As far as we know, the upper bounds between the distribution of above estimators and the Normal distribution are novel.
\end{abstract}

\maketitle
\bibliographystyle{unsrtnat}%

\section{Introduction}\label{bib_vasicek}

Vasicek model is a type of 1-dimensional stochastic processes, it is used in various fields, such as economy, finance, environment. It was originally used to describe short-term interest rate fluctuations influenced by single market factors. 
Proposed by O. Vasicek \cite{vasicek_equilibrium_1977}, it is the first stochastic process model to describe the ``mean reversion" characteristic of short-term interest rates. In the financial field, it can also be used as a random investment model in Wu \etal \cite{wu_optimal_2020} and Han \etal \cite{han2021calibrating}.

\begin{definition}\label{general_vasicek}
    Consider the Vasicek model driven by general Gaussian process, it satisfies the following Stochastic Differential Equation (SDE):
    \begin{equation}
        \dif V_t=k(\mu -V_t)\dif t+\sigma \dif G_t,\;\;\; t\in[0,T],
        \label{Vmdif}
    \end{equation}
    where $k, T>0,\ V_0=0$ and $G=\{G_t\}_{t\geq 0}$ is a general one-dimensional centered Gaussian process that satisfies \Cref{hyp_1}. 
\end{definition}

This paper mainly focuses on the convergence rate of estimators of coefficient $k,\mu$. Without loss of generality, we assume $\sigma=1$, then Vasicek model can be represent by the following form:
\begin{equation*}
        V_t=\mu(1-\me^{-kt})+\int^t_0 e^{-k(t-s)}\dif G_s.
        \label{Vmexp}
\end{equation*}

When the coefficients in the drift function is unknown, an important problem is to estimate the drift coefficients based on the observation.
Based on the Brownian motion, Fergusson and Platen \cite{fergusson_application_2015} present the maximum likelihood estimators of coefficients in Vasicek model. When the Vasicek model driven by the fractional Brownian motion, Xiao and Yu \cite{xiao2019asymptotic} consider the least squares estimators and their asymptotic behaviors. When $k>0$, Hu and Nualart \cite{hu_parameter_2010} study the moment estimation problem.

Since the Gaussian process $G_t$ mainly determines the trajectory properties of Vasicek model. Therefore, following the assumptions in Chen and Zhou \cite{chen_parameter_2021}, we make the following Hypothesis about $G_t$.
\begin{hypothesis}[\cite{chen_parameter_2021} Hypothesis 1.1]\label{hyp_1}
    Let $\beta \in (\frac{1}{2},1)$ and $t\neq s\in \lbrack 0,\infty )$, Covariance function $R(t,s)=\mathbb{E}[G_{t}G_{s}]$ of Gaussian process $G_t$ satisfies the following condition:
    \begin{align}
        \frac{\partial ^{2}}{\partial t\partial s}R(t,s)&=C_{\beta}\abs{t-s}^{2\beta-2}+\Psi (t,s),
        \label{cond hyp1}
    \end{align}
    where
    \begin{align*}
        \abs{\Psi (t,s)}&\leq C_{\beta}^{\prime}\abs{ts}^{\beta -1},
    \end{align*}
    $\beta,\ C_{\beta}>0,C_{\beta}^{\prime}\geq 0$ are constants independent with $T$. Besides, $R(0,t)=0$ for any $t\geq 0$.
\end{hypothesis}
\begin{remarkn}
    The covariance functions of Gaussian processes such as fractional Brownian motion, subfractional Brownian motion and double fractional Brownian motion satisfy the above Hypothesis \cite[Examples 1.5-1.8]{chen_parameter_2021}. 
\end{remarkn}

Assuming that there is only one trajectory $(V_t, {t\geq0})$, we can construct the least squares estimators (LSE) and the moment estimators (ME) (See \cite{yu_statistical_2018,xiao_least_2018,xiao2019asymptotic,cai2022mixed} for more details).
\begin{proposition}[\cite{peixingzhi} (4) and (5)]
    The estimator of $\mu$ is the continuous-time sample mean:
    \begin{equation}
        \hat{\mu}=\frac{1}{T}\int_{0}^{T}V_{t}\dif t.
        \label{mu_moment}
    \end{equation}
    The second moment estimator of $k$ is given by
    \begin{equation}
        \hat{k}=\biggl[ \frac{\frac{1}{T}\int_{0}^{T}V_{t}^{2}\dif t-(\frac{1}{T}\int_{0}^{T}V_{t}\dif t)^{2}}{C_{\beta }\Gamma (2\beta -1)}\biggr] ^{-\frac{1}{2\beta }}.
        \label{k_moment}
    \end{equation}
\end{proposition}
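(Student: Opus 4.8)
The plan is to derive both estimators by the method of moments, matching the time-averaged (ergodic) sample moments of a single trajectory to their stationary population counterparts and then solving for the parameters. For the mean, I would start from the explicit representation $V_t=\mu(1-\me^{-kt})+\int_0^t \me^{-k(t-s)}\dif G_s$, so that $\mE[V_t]=\mu(1-\me^{-kt})\to\mu$ as $t\to\infty$. Averaging in time gives $\frac{1}{T}\int_0^T \mE[V_t]\dif t=\mu\bigl(1-\tfrac{1-\me^{-kT}}{kT}\bigr)\to\mu$, and a law-of-large-numbers argument for the centered Gaussian part shows $\frac{1}{T}\int_0^T V_t\dif t\to\mu$. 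This identifies the continuous-time sample mean $\hat{\mu}=\frac{1}{T}\int_0^T V_t\dif t$ as the natural moment estimator of $\mu$.

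For $k$, the relevant population moment is the stationary variance. Writing $V_t-\mE[V_t]=\int_0^t\me^{-k(t-s)}\dif G_s$ and using the inner-product structure induced by \Cref{hyp_1}, I would express
\[
\mathrm{Var}(V_t)=\int_0^t\int_0^t \me^{-k(t-u)}\me^{-k(t-v)}\frac{\partial^2}{\partial u\partial v}R(u,v)\dif u\dif v,
\]
then split the kernel into the leading term $C_{\beta}\abs{u-v}^{2\beta-2}$ and the remainder $\Psi(u,v)$. For the leading part, the substitution $u'=t-u,\ v'=t-v$ followed by the change of variables $s=u'+v',\ r=u'-v'$ collapses the double integral to $\frac{C_{\beta}}{2\beta-1}\int_0^\infty \me^{-ks}s^{2\beta-1}\dif s=\frac{C_{\beta}\Gamma(2\beta)}{(2\beta-1)k^{2\beta}}=C_{\beta}\Gamma(2\beta-1)k^{-2\beta}$ in the limit $t\to\infty$, using $\Gamma(2\beta)=(2\beta-1)\Gamma(2\beta-1)$.

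The remainder must then be shown to be asymptotically negligible: the bound $\abs{\Psi(u,v)}\le C_{\beta}^{\prime}\abs{uv}^{\beta-1}$ yields a contribution dominated by $C_{\beta}^{\prime}\bigl(\int_0^t \me^{-k(t-u)}u^{\beta-1}\dif u\bigr)^2=O(t^{2\beta-2})\to 0$, since $\beta<1$. Hence the stationary variance equals $C_{\beta}\Gamma(2\beta-1)k^{-2\beta}$, and combining this with the ergodic convergence of the sample variance $\frac{1}{T}\int_0^T V_t^2\dif t-(\frac{1}{T}\int_0^T V_t\dif t)^2$ to the same limit and solving for $k$ gives exactly \eqref{k_moment}. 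I expect the main obstacle to be the uniform control of the $\Psi$-remainder together with the rigorous justification of the ergodic limits for a general Gaussian driver: unlike the fractional-Brownian-motion case, one cannot rely on an explicit covariance and must instead exploit only the structural bounds of \Cref{hyp_1}.
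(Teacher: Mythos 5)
Your derivation is mathematically sound, but there is nothing in the paper to compare it against: this proposition carries no proof at all. It is a statement of \emph{definitions}, quoted from \cite{peixingzhi} (formulas (4) and (5)), and the paper treats the two formulas \eqref{mu_moment} and \eqref{k_moment} as the starting point rather than as something to be derived. What you have reconstructed is the method-of-moments motivation standing behind those definitions, and you have reconstructed it correctly: the leading-term computation of the stationary variance via the change of variables $s=u'+v'$, $r=u'-v'$ together with $\Gamma(2\beta)=(2\beta-1)\Gamma(2\beta-1)$ does give $C_{\beta}\Gamma(2\beta-1)k^{-2\beta}$, and the $\Psi$-remainder is indeed $O(t^{2\beta-2})$ (splitting $\int_0^t \me^{-k(t-u)}u^{\beta-1}\dif u$ at $t/2$ shows it is $O(t^{\beta-1})$). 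The ergodic limits you invoke to close the argument --- strong consistency of the sample mean and the a.s.\ convergence of the sample variance to $C_{\beta}\Gamma(2\beta-1)k^{-2\beta}$ --- are precisely the content of Proposition 3.14 and Corollary 3.15 of \cite{peixingzhi}, which the present paper itself only cites later (see \eqref{limit_J_0} in the proof of \Cref{lemma_kls1}); they are genuine theorems in the general Gaussian setting of \Cref{hyp_1}, not routine facts, so your closing remark that this is the main obstacle is well placed. In short: your proposal supplies the derivation that the paper delegates entirely to the citation, and it matches the standard construction used there.
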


Following from Xiao and Yu \cite{xiao2019asymptotic}, we present the LSE in  Vasicek model.
\begin{proposition}[\cite{peixingzhi} (7) and (8)]
    The LSE is motivated by the argument of minimize a quadratic function $L(k,\mu)$ of $k$ and $\mu$: 
    \begin{equation*}
        L(k,\mu )=\int_{0}^{T}\bigl(\overset{\cdot }{V_{t}}-k(\mu -V_{t})\bigr)^{2}\dif t,
    \end{equation*}
    Solving the equation, we can obtain the LSE of $k$ and $\mu$, denoted by $\kls$ and $\muls$ respectively.
    \begin{equation}
        \hat{k}_{LS}=\dfrac{V_{T}\int_{0}^{T}V_{t}\dif t-T\int_{0}^{T}V_{t}\dif V_{t}}{T\int_{0}^{T}V_{t}^{2}\dif t-(\int_{0}^{T}V_{t}\dif t)^{2}},
        \label{mu_ls}
    \end{equation}
    \begin{equation}
        \hat{\mu }_{LS}=\frac{V_{T}\int_{0}^{T}V_{t}^{2}\dif t-\int_{0}^{T}V_{t}\dif V_{t}\int_{0}^{T}V_{t}\dif t}{V_{T}\int_{0}^{T}V_{t}\dif t-T\int_{0}^{T}V_{t}\dif V_{t}},
        \label{k_ls}
    \end{equation}
    where the integral $\int_{0}^{T}V_{t}\dif V_{t}$ is an  It\^{o}-Skorohod integral. 
\end{proposition}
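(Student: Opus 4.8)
The plan is to treat $L(k,\mu)$ as an ordinary smooth function of the two real parameters $(k,\mu)$, form the two normal equations by setting its partial derivatives to zero, and then solve the resulting algebraic system. The only analytic inputs are the endpoint identity $\int_0^T \dot V_t\,\dif t = V_T - V_0 = V_T$ (using $V_0=0$) and the convention that $\int_0^T V_t\dot V_t\,\dif t$ is read as the It\^{o}-Skorohod integral $\int_0^T V_t\,\dif V_t$; with these, every occurrence of the formally defined symbol $\dot V_t$ is replaced by a legitimate integral against $\dif V_t$.

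First I would differentiate in $\mu$. Since $\partial_\mu L = -2k\int_0^T\bigl(\dot V_t - k(\mu - V_t)\bigr)\,\dif t$, cancelling the nonzero factor $-2k$ and using the endpoint identity gives the first normal equation
\begin{equation*}
    V_T - k\mu T + k\int_0^T V_t\,\dif t = 0.
\end{equation*}
Differentiating in $k$ gives $\partial_k L = -2\int_0^T\bigl(\dot V_t - k(\mu - V_t)\bigr)(\mu - V_t)\,\dif t$; expanding the product and again using $\int_0^T\dot V_t\,\dif t = V_T$ together with $\int_0^T V_t\,\dif V_t$ yields the second normal equation
\begin{equation*}
    \mu V_T - \int_0^T V_t\,\dif V_t - k\Bigl(\mu^2 T - 2\mu\int_0^T V_t\,\dif t + \int_0^T V_t^2\,\dif t\Bigr) = 0.
\end{equation*}

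The key step is to eliminate the quadratic $\mu^2$ term. Writing the first equation as $k\mu T = V_T + k\int_0^T V_t\,\dif t$ and substituting into $k\mu^2 T = \mu\cdot(k\mu T)$ collapses the second equation to the clean relation $k\mu\int_0^T V_t\,\dif t = \int_0^T V_t\,\dif V_t + k\int_0^T V_t^2\,\dif t$. I am then left with two relations linear in the unknown combinations, namely $k\bigl(\mu T - \int_0^T V_t\,\dif t\bigr) = V_T$ and $k\bigl(\mu\int_0^T V_t\,\dif t - \int_0^T V_t^2\,\dif t\bigr) = \int_0^T V_t\,\dif V_t$. Dividing the first by the second eliminates $k$ and, after cross-multiplying, solves for $\mu$ to give precisely \eqref{k_ls}; equating the two expressions for $\mu$ instead eliminates $\mu$ and produces $k = \bigl(V_T\int_0^T V_t\,\dif t - T\int_0^T V_t\,\dif V_t\bigr)/\bigl(T\int_0^T V_t^2\,\dif t - (\int_0^T V_t\,\dif t)^2\bigr)$, which is exactly \eqref{mu_ls}.

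There is no genuine analytic obstacle: once the normal equations are in hand the derivation is pure algebra. The single delicate point is the meaning of $\int_0^T V_t\,\dif V_t$. Because $G$ is only assumed to be a general Gaussian process satisfying \Cref{hyp_1} and need not be a semimartingale, the derivative $\dot V_t$ has no pathwise sense, so the term $\int_0^T V_t\dot V_t\,\dif t$ arising from the differentiation in $k$ must be interpreted as the It\^{o}-Skorohod integral. Fixing this convention makes the whole formal minimization well posed and delivers the two stated estimators.
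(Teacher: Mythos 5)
Your derivation is correct and is essentially the argument the paper intends: the paper quotes this proposition from Pei et al.\ without reproducing a proof, and the statement itself prescribes exactly your route (set $\partial_\mu L=\partial_k L=0$, interpret $\int_0^T V_t\,\dif V_t$ as an It\^{o}-Skorohod integral since $\dot V_t$ has no pathwise meaning, and solve). Your algebra checks out --- the substitution $k\mu^2 T=\mu\,(k\mu T)$ correctly collapses the second normal equation, and the resulting linear system in $k$ and $k\mu$ yields precisely \eqref{mu_ls} and \eqref{k_ls} (modulo the a.s.\ nondegeneracy $T\int_0^T V_t^2\,\dif t-(\int_0^T V_t\,\dif t)^2>0$, which justifies the divisions).
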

Pei \etal \cite{peixingzhi} prove the following consistencies and central limit theorems (CLT) of estimators. 
\begin{theorem}[\cite{peixingzhi}, Theorem 1.2]
    When \Cref{hyp_1} is satisfied, both ME and LSE of  $\mu, k$ are strongly consistent, that is
    \begin{align*}
        &\lim_{T\to\infty} \hat{\mu}=\mu,\;\;\;\;\;\; \lim_{T\to\infty} \hat{k}=k,\;\;\;\mathrm{a.s.};\\
        &\lim_{T\to\infty} \hat{\mu}_{LS}=\mu,\;\;\; \lim_{T\to\infty} \hat{k}_{LS}=k, \;\;\;\mathrm{a.s.}.
    \end{align*}
\end{theorem}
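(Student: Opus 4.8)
The plan is to reduce every estimator to the asymptotics of the driftless Ornstein--Uhlenbeck part. Writing $V_t=\mu(1-\me^{-kt})+Z_t$ with $Z_t=\int_0^t \me^{-k(t-s)}\dif G_s$, the deterministic factor satisfies $\frac1T\int_0^T\mu(1-\me^{-kt})\dif t\to\mu$, so strong consistency follows once I control the time averages of $Z$. Concretely I would first establish the almost sure limits $\frac1T\int_0^T Z_t\dif t\to 0$, $\frac1T\int_0^T Z_t^2\dif t\to\sigma_\infty^2$, and $\frac1T\int_0^T Z_t\dif G_t\to 0$, together with $Z_T/T\to0$ and $V_T/T\to0$, where $\sigma_\infty^2:=\lim_{t\to\infty}\mE[Z_t^2]=C_\beta\Gamma(2\beta-1)k^{-2\beta}$. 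This identification comes from substituting \Cref{hyp_1} into $\mE[Z_t^2]=\int_0^t\int_0^t \me^{-k(t-u)}\me^{-k(t-v)}\frac{\partial^2}{\partial u\partial v}R(u,v)\dif u\dif v$: the main term $C_\beta\abs{u-v}^{2\beta-2}$ yields, after the change of variables $a=t-u,\ b=t-v$ and $t\to\infty$, the Gamma integral $C_\beta\int_0^\infty\int_0^\infty \me^{-ka-kb}\abs{a-b}^{2\beta-2}\dif a\dif b=C_\beta\Gamma(2\beta-1)k^{-2\beta}$, while the correction $\Psi$ contributes a quantity bounded by $C_\beta'\bigl(\int_0^t\me^{-k(t-u)}u^{\beta-1}\dif u\bigr)^2\to0$ because $\beta<1$.

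For the almost sure limits I would exploit the Wiener chaos structure: each functional lies in a fixed finite sum of chaoses of order at most two, so I would compute its mean and variance from the covariance in \Cref{hyp_1}, proving the mean converges to the stated limit and the variance is $o(T^2)$ (indeed of polynomial decay after normalization). Hypercontractivity then bounds every $L^p$ norm by the $L^2$ norm, and Chebyshev's inequality together with Borel--Cantelli gives convergence along a suitable subsequence $T_n$. I would upgrade to the continuous limit by interpolation: for the increasing functional $\int_0^T Z_t^2\dif t$ monotonicity between consecutive subsequence points is immediate, whereas for $\int_0^T Z_t\dif t$ and $\int_0^T Z_t\dif G_t$ I would control the oscillation $\sup_{T_n\le T\le T_{n+1}}\abs{F_T-F_{T_n}}$ by a Garsia--Rodemich--Rumsey / Kolmogorov-type continuity estimate fed by the uniform $L^p$ bounds. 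The bound $Z_T/T\to0$ follows because $\sup_t\mE[Z_t^2]<\infty$ forces $Z_T=O(\sqrt{\log T})$ almost surely by the Gaussian tail, and then $V_T/T\to0$ is immediate.

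It remains to assemble the estimators. For the moment estimators, $\hat\mu=\frac1T\int_0^T V_t\dif t\to\mu$ directly, and $\frac1T\int_0^T V_t^2\dif t-\hat\mu^2\to\sigma_\infty^2$, so $\hat k=\bigl[(\frac1T\int_0^T V_t^2\dif t-\hat\mu^2)/(C_\beta\Gamma(2\beta-1))\bigr]^{-1/(2\beta)}\to (k^{-2\beta})^{-1/(2\beta)}=k$. For the LSE I would substitute $\dif V_t=k(\mu-V_t)\dif t+\dif G_t$ to write $\frac1T\int_0^T V_t\dif V_t=k\mu\hat\mu-k\cdot\frac1T\int_0^T V_t^2\dif t+\frac1T\int_0^T V_t\dif G_t$, whose limit is $-k\sigma_\infty^2$ since the normalized It\^o--Skorohod term vanishes (the divergence integral has mean zero and variance $o(T^2)$). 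Dividing numerator and denominator of $\kls$ and $\muls$ by $T^2$ and using $V_T/T\to0$, the denominator of $\kls$ tends to $\sigma_\infty^2$ and its numerator to $k\sigma_\infty^2$, giving $\kls\to k$; the identical limits then give $\muls\to\mu$.

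The main obstacle I expect is twofold. First, the variance estimate for the mixed second-chaos object $\int_0^T Z_t\dif G_t$ and the verification, via its Skorohod (divergence) nature, that its normalized mean vanishes and its variance is genuinely $o(T^2)$ under the general covariance of \Cref{hyp_1}. Second, the interpolation step promoting subsequential convergence to the full continuous-time almost sure statement for the non-monotone functionals, where the correction $\Psi$ must be shown not to destroy the decay of the normalized variance.
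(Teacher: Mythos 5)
This theorem is imported verbatim from Pei et al.\ \cite{peixingzhi} (their Theorem 1.2); the present paper gives no proof of it, so there is nothing internal to compare against, but your sketch is correct and follows exactly the standard route of that literature: split $V_t$ into the deterministic mean-reversion term plus the OU functional $Z_t$, establish the almost sure limits of $\frac{1}{T}\int_0^T Z_t\dif t$, $\frac{1}{T}\int_0^T Z_t^2\dif t$ and $\frac{1}{T}\int_0^T Z_t\dif G_t$ through chaos-expansion variance bounds, hypercontractivity, Borel--Cantelli along a subsequence and an interpolation/monotonicity step, and then assemble the four estimators algebraically. Your limit $\sigma_\infty^2=C_\beta\Gamma(2\beta-1)k^{-2\beta}$ and the reductions of $\hat{k}$, $\kls$, $\muls$ agree with the quantities the paper itself relies on later (its $\alpha$, $b_T$, and the a.s.\ limit \eqref{limit_J_0} quoted from \cite{peixingzhi}), so the reconstruction is consistent with the cited proof.
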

\begin{theorem}[\cite{peixingzhi}, Theorem 1.3]\label{CLT}
    Assume \Cref{hyp_1} is satisfied. When $G_t$ is self-similar and  $\mE[G_1^2] =1$, $\hat{\mu}$ and $\muls$ are asymptotically normal as $T\to\infty$, that is, 
    \begin{equation*}
        T^{1-\beta}(\hat{\mu}-\mu)\stackrel{law}{\longrightarrow} \mathcal{N}(0,1/k^2),\;\;\;\sqrt{T}(\muls-\mu)\distribution \mathcal{N}(0, 1/k^2).
    \end{equation*}
    When $\beta\in(\frac{1}{2},\frac{3}{4})$, 
    \begin{equation*}
        \sqrt{T}(\hat{k}-k)\distribution \mathcal{N}(0, k\sigma^2_\beta/4\beta^2),
    \end{equation*}
    where
    \begin{align}
        \sigma_\beta^2 = (4\beta-1)\biggl[1+\frac{\Gamma(3-4\beta)\Gamma(4\beta-1)}{\Gamma(2\beta)\Gamma(2-2\beta)}\biggr].
        \label{sigma_beta}
    \end{align}
    Simalarly, $\sqrt{T}(\kls-k)$ is also asymptotically normal as $T\to\infty$:
    \begin{align*}
        \sqrt{T}(\kls-k)&\distribution \mathcal{N}(0,k \sigma _{\beta }^{2}).
    \end{align*}
\end{theorem}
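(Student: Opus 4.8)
The plan is to write each estimator's centred and rescaled deviation as a finite sum of Wiener-It\^o chaoses with respect to the isonormal Gaussian process generated by $G$, and then to combine the fourth moment theorem of Nualart and Peccati with Slutsky's lemma. Throughout I set $Z_t=\int_0^t \me^{-k(t-s)}\dif G_s$, the stationary-type part of $V_t$, so that $V_t=\mu(1-\me^{-kt})+Z_t$; the deterministic exponential corrections contribute only $O(1/T)$ and are negligible against every scaling rate in the statement.

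For $\hat\mu$ and $\muls$ I would first apply a stochastic Fubini argument to rewrite $\frac1T\int_0^T Z_t\dif t$ as a single Wiener integral $I_1(h_T)$ with $h_T(s)=\frac{1}{kT}(1-\me^{-k(T-s)})$. Since a first-chaos element is exactly Gaussian, no fourth-moment verification is needed: it suffices to compute $\normh{h_T}^2$ from \Cref{hyp_1} and to show $T^{2(1-\beta)}\normh{h_T}^2\to 1/k^2$. The dominant contribution comes from the $C_\beta\abs{t-s}^{2\beta-2}$ term in \eqref{cond hyp1}, whose double integral against $h_T\otimes h_T$ scales like $T^{2\beta-2}$ after the $1/T^2$ prefactor, yielding the stated variance, while the $\Psi$ term is of strictly lower order. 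The same first-chaos representation, now rescaled by $\sqrt{T}$, delivers the limit for $\muls$ once its defining ratio \eqref{k_ls} is reorganised and Slutsky's lemma is applied to the denominator, which converges almost surely by the preceding consistency theorem.

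The genuinely two-chaos object is the drift estimator. Writing $\Theta_T=\frac1T\int_0^T V_t^2\dif t-(\frac1T\int_0^T V_t\dif t)^2$, I would expand $\int_0^T Z_t^2\dif t$ into $\mE[\int_0^T Z_t^2\dif t]+I_2(f_T)$ via the product formula $I_1(g_t)^2=I_2(g_t\otimes g_t)+\normh{g_t}^2$, so that $\sqrt{T}(\Theta_T-\theta)$, with $\theta=C_\beta\Gamma(2\beta-1)k^{-2\beta}$, is driven by $I_2(\widetilde f_T)$, $\widetilde f_T=f_T/\sqrt{T}$, plus asymptotically negligible first-chaos cross terms. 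The CLT for this double integral follows from the fourth moment theorem once one shows $\mE[I_2(\widetilde f_T)^2]$ converges to a finite variance and that the single contraction $\norm{\widetilde f_T\otimes_1\widetilde f_T}\to0$. Here the restriction $\beta\in(\frac12,\frac34)$ is decisive: it is exactly the threshold below which the variance grows like $T$ (so that $\sqrt{T}$ is the right rate) and the contraction vanishes; for $\beta\ge\frac34$ a Rosenblatt-type non-normal limit would appear instead. The delta method applied to $g(x)=(x/C_\beta\Gamma(2\beta-1))^{-1/(2\beta)}$ at $x=\theta$ then transfers the limit to $\sqrt{T}(\hat k-k)$, the factor $g'(\theta)^2$ producing the coefficient $k\sigma_\beta^2/4\beta^2$. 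The estimator $\kls$ is treated by the same machinery after expressing the numerator of \eqref{mu_ls} through the Skorohod integral $\int_0^T V_t\dif V_t$ and using the divergence-operator identity to isolate its second-chaos part; the differing linearisation accounts for the factor $4\beta^2$ between the two drift variances.

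The main obstacle, I expect, is the explicit identification of $\sigma_\beta^2$ in \eqref{sigma_beta}. This demands a delicate asymptotic evaluation of $T^{-1}\mE[I_2(f_T)^2]$, a double integral of $\me^{-k\abs{\cdot}}$-type kernels against $\abs{t-s}^{2\beta-2}$, together with the contraction bound; it is precisely here that the Gamma factors $\Gamma(3-4\beta),\Gamma(4\beta-1),\Gamma(2\beta),\Gamma(2-2\beta)$ emerge from Beta-integral computations that remain finite only for $\beta<\frac34$. Controlling the $\Psi$-perturbation and the Skorohod cross terms uniformly in $T$, so that they disturb neither the limiting variance nor the fourth-moment condition, is the accompanying technical difficulty.
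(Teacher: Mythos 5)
Your treatment of the drift estimators $\hat k$ and $\kls$ is essentially the route the paper relies on (chaos decomposition of the quadratic functional, fourth moment theorem, contraction decay exactly for $\beta<3/4$, delta method producing the factor $4\beta^2$), but there is a genuine gap in your argument for $\hat\mu$ and $\muls$: the claim that the $\Psi$-perturbation in \eqref{cond hyp1} is of strictly lower order is false for the first-chaos functional. With $h_T(s)=\frac{1}{kT}\bigl(1-\me^{-k(T-s)}\bigr)$, the contribution of the term $C_\beta\abs{t-s}^{2\beta-2}$ to $\normh{h_T}^2$ is $\frac{C_\beta}{\beta(2\beta-1)k^2}T^{2\beta-2}\bigl(1+o(1)\bigr)$, while the $\Psi$-part is bounded by $\frac{C_\beta'}{k^2T^2}\int_{[0,T]^2}(ts)^{\beta-1}\dif t\dif s=\frac{C_\beta'}{\beta^2k^2}T^{2\beta-2}$ --- the \emph{same} order $T^{2\beta-2}$, and for it you have only an upper bound, not a limit. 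So under \Cref{hyp_1} alone your limit $T^{2-2\beta}\normh{h_T}^2$ is neither computable nor equal to $1/k^2$; even ignoring $\Psi$, the leading constant is $C_\beta/(\beta(2\beta-1)k^2)$, which equals $1/k^2$ only in the fractional Brownian case $C_\beta=\beta(2\beta-1)$, $\Psi\equiv 0$. Your variance identification therefore fails precisely where the theorem's extra hypotheses (self-similarity and $\mE[G_1^2]=1$) are supposed to enter.

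The argument behind the statement (this paper imports it from the cited reference, but reproduces the decomposition in Section 3) avoids the kernel computation entirely: by stochastic Fubini, $kT(\hat\mu-\mu)=\mu(\me^{-kT}-1)+\int_0^T(1-\me^{-k(T-s)})\dif G_s$ and the algebraic identity $\int_0^T(1-\me^{-k(T-s)})\dif G_s=G_T-Z_T$, with $Z_T=I_1\bigl(\me^{-k(T-s)}\mathbbm{1}_{[0,T]}(s)\bigr)$, isolates $G_T$. Self-similarity with $\mE[G_1^2]=1$ makes $G_T/T^\beta$ \emph{exactly} standard normal, and \Cref{hyp_1} is used only to show $\mE[Z_T^2]=O(1)$ (as in \Cref{Lemma_MT}), so $Z_T/T^\beta\to 0$ in $L^2$; the limit $\mathcal N(0,1/k^2)$ follows with no control of $\Psi$ at the leading order. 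This also explains the asymmetry in the hypotheses: for the quadratic functional driving $\hat k$ the $\Psi$-term genuinely is negligible (this is the content of Chen--Zhou's Theorem 1.4, valid without self-similarity), whereas for the linear functional it is not. A secondary inconsistency: you repeat the $\sqrt T$ normalization for $\muls$ from the statement, yet your own asymptotics $\normh{h_T}^2\asymp T^{2\beta-2}$ force the rate $T^{1-\beta}$; the rest of the paper (Theorem \ref{thm_mum} and the quoted Proposition 4.21) indeed uses $T^{1-\beta}(\muls-\mu)\distribution\mathcal N(0,1/k^2)$, so the $\sqrt T$ should have been flagged as a misprint rather than built upon.
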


We now present the main Theorems for the whole paper, and their details are given in the following sections.
\begin{theorem}\label{thm_km}
    Let $Z$ be a standard Normal random variable, and $\sigma_\beta^2$ be the constant defined by \eqref{sigma_beta}. 
    Assume $\beta\in (1/2,3/4)$ and \Cref{hyp_1} is satisfied. When $T$ is large enough, there exists a constant $C_{\beta,V}$ such that
    \begin{align}
        \sup_{z\in\mathbb{R}}&\abs{\Pro\biggl(\sqrt{\frac{4\beta^2 T}{k\sigma^2_\beta}}(\hat{k}-k)\leq z\biggr)-\Pro(Z\leq z)}\leq \frac{C_{\beta,V}}{T^m},\label{hat_k}\\
        \sup_{z\in\Rnum}&\abs{\Pro\biggl(\sqrt{\frac{T}{k\sigma_\beta^2}}(\kls-k)\biggr)-\Pro(Z\leq z)}\leq \frac{C_{\beta,V}}{T^{3/4-\beta}},\label{hat_kls}
    \end{align}
    where $m= \min\{1/3, (3-4\beta)/2\}$.
\end{theorem}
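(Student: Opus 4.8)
The plan is to reduce both Berry--Ess\'een bounds to the normal approximation of a normalized Wiener--It\^o multiple integral divided by a denominator that converges to a constant, and then to invoke a ratio-type estimate in the spirit of Kim and Park, extended here to accommodate a numerator whose chaos expansion reaches the third order. First I would exploit the explicit representation $V_t=\mu(1-\me^{-kt})+\int_0^t\me^{-k(t-s)}\dif G_s$, whose stochastic part is an element of the first Wiener chaos, to expand each building block $\int_0^T V_t\,\dif t$, $\int_0^T V_t^2\,\dif t$, $\int_0^T V_t\,\dif V_t$ and $V_T$ into its Wiener chaos decomposition, isolating the leading fluctuating term that drives the central limit theorem of \Cref{CLT} from lower-order remainders.

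For the moment estimator write $\hat{k}=\bigl(Q_T/c\bigr)^{-1/(2\beta)}$ with $c=C_\beta\Gamma(2\beta-1)$ and $Q_T=\frac1T\int_0^T V_t^2\,\dif t-\bigl(\frac1T\int_0^T V_t\,\dif t\bigr)^2$, whose almost sure limit is $Q_\infty=c\,k^{-2\beta}$. A delta-method (mean value) expansion gives
\begin{equation*}
\sqrt{\tfrac{4\beta^2 T}{k\sigma_\beta^2}}\,(\hat{k}-k)=a_T\,\sqrt{T}\,(Q_T-Q_\infty)+\mathrm{Rem}_T,
\end{equation*}
where $a_T$ converges to a finite constant and the Taylor remainder satisfies $\abs{\mathrm{Rem}_T}\le C\sqrt{T}\,(Q_T-Q_\infty)^2$. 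The dominant term $\sqrt{T}(Q_T-Q_\infty)$ is, up to negligible pieces, a second-chaos functional $\tilde F_T$; I would apply the Stein--Malliavin/fourth-moment bound
\begin{equation*}
d_{\mathrm{Kol}}(\tilde F_T,Z)\le C\sqrt{\mE\Bigl[\bigl(1-\tfrac12\normh{D\tilde F_T}^2\bigr)^2\Bigr]},
\end{equation*}
and estimate the right-hand side via \Cref{hyp_1}, the singular kernel $\abs{t-s}^{2\beta-2}$ producing the rate $T^{-(3-4\beta)/2}$ for $\beta\in(1/2,3/4)$.

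The contribution of the quadratic remainder is transferred to the Kolmogorov distance by the smoothing inequality $d_{\mathrm{Kol}}(X+R,Z)\le d_{\mathrm{Kol}}(X,Z)+\Pro(\abs{R}>\delta)+C\delta$: since hypercontractivity gives $\mE[\mathrm{Rem}_T^2]=O(T^{-1})$, a Chebyshev bound makes the optimal choice $\delta\sim T^{-1/3}$ yield a $T^{-1/3}$ term, which together with the chaos rate explains the exponent $m=\min\{1/3,(3-4\beta)/2\}$ in \eqref{hat_k}. For the least squares estimator I would instead write $\kls-k$ as a genuine ratio whose denominator $T\int_0^T V_t^2\,\dif t-(\int_0^T V_t\,\dif t)^2$ concentrates around $T^2Q_\infty$, and control its $L^2$-fluctuation so that the extended ratio lemma reduces the problem to the normal approximation of the numerator.

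The hard part will be the numerator of $\kls$: the It\^o--Skorohod integral $\int_0^T V_t\,\dif V_t$ has a chaos expansion reaching the third Wiener chaos, which falls outside the double-integral framework of Kim and Park. The core technical work is to establish a Berry--Ess\'een estimate for a normalized sum of multiple integrals of order up to three, and to bound the relevant contractions $f\otimes_r f$ of the third-order kernels against the covariance singularity of \Cref{hyp_1}. These contraction integrals converge more slowly than their second-chaos analogues, producing the weaker rate $T^{-(3/4-\beta)}$ in \eqref{hat_kls}; verifying their convergence and sharpness for $\beta\in(1/2,3/4)$ is the main obstacle.
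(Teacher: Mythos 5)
Your plan for the moment estimator $\hat{k}$ is essentially sound and parallels the paper's argument. Where you Taylor-expand $g(Q_T)=(Q_T/c)^{-1/(2\beta)}$ and push the quadratic remainder through the smoothing inequality at level $\delta\sim T^{-1/3}$, the paper instead inverts the event $\{\hat{k}-k\le\sqrt{k\sigma_\beta^2/(4\beta^2T)}\,z\}$ exactly (by monotonicity of $x\mapsto x^{-1/(2\beta)}$), pays $O(T^{-1/2})$ for replacing the transformed quantile $\nu$ by $z$, and then splits $Q_T-\alpha$ into the OU quadratic functional (rate $T^{-(3-4\beta)/2}$ by the known second-chaos Berry--Ess\'{e}en bound of Chen--Zhou) plus cross terms controlled by Chebyshev via \Cref{chang_lemma}; both routes give $m=\min\{1/3,(3-4\beta)/2\}$. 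Your delta method needs only the standard localization to the event $\{\abs{Q_T-Q_\infty}\le Q_\infty/2\}$ (whose complement has probability $O(T^{-1})$) to keep $g''$ bounded, so this half of the proposal is fine.

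The genuine gap is in your treatment of $\kls$, and it is twofold. First, your claim that $\int_0^T V_t\,\dif V_t$ ``has a chaos expansion reaching the third Wiener chaos'' is false: since $V_t=\mu(1-\me^{-kt})+I_1\bigl(\me^{-k(t-\cdot)}\mathbbm{1}_{[0,t]}\bigr)$ is deterministic plus first chaos, we have
\begin{equation*}
\int_0^T V_t\,\dif V_t=k\mu\int_0^T V_t\,\dif t-k\int_0^T V_t^2\,\dif t+\int_0^T V_t\,\dif G_t,
\end{equation*}
and every term on the right (like $V_T\hat{\mu}$ and $\hat{\mu}(\hat{\mu}-\mu)$ in the numerator of the CLT representation) lies in chaoses of order at most two; indeed, in \Cref{lemma_formal} the numerator of $\sqrt{T}(\kls-k)$ is exactly $I_0+I_1(f_{1,T})+I_2(f_{2,T})$. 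Triple integrals only ever arise for $\muls$, i.e.\ in \Cref{thm_mum}, not in the present theorem, so the ``core technical work'' you describe would have you estimating contraction kernels that do not exist. Second, and more importantly, you miss the term that actually dictates the exponent $3/4-\beta$: the numerator is \emph{not} centered. Its deterministic part $I_0$ in \eqref{formula_I_0} is of order $T^{2\beta-3/2}$ (driven by $q_T,e_T\asymp T^{2\beta}$), while the Kim--Park ratio framework requires a zero-mean numerator. The paper splits off $L_1=I_0/(\sqrt{k}\sigma_\beta H_T)$, bounds it almost surely by $C/T^{3/2-2\beta}$ (\Cref{lemma_kls1}), and absorbs it through \Cref{chang_lemma} with threshold $a\sim T^{-(3/4-\beta)}$; it is this smoothing term $a/\sqrt{2\pi}$ that produces the stated rate, whereas the zero-mean ratio part converges faster, at rate $\min\{1/2,\,3-4\beta\}$ (\Cref{lemma_kls}), which beats $3/4-\beta$ for all $\beta\in(1/2,3/4)$. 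Without isolating and bounding this bias, the ratio lemma does not apply as you state it, and your heuristic for the origin of the $T^{-(3/4-\beta)}$ rate is incorrect.
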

Next, we show the convergence speed of mean coefficient estimators $\hat{\mu}$ and $\muls$.
\begin{theorem}\label{thm_mum}
    Assume $\beta\in (1/2,1)$, and $G_t$ is a self-similar Gaussian process satisfying \Cref{hyp_1} and $\mE[G_1^2] =1$. Then there exists a constant $C_{\beta,V}$ such that 
    \begin{align}
        \sup_{z\in\mathbb{R}}&\abs{\Pro\biggl(\frac{k}{T^{\beta-1}}(\hat{\mu}-\mu)\leq z\biggr)-\Pro(Z\leq z)}\leq \frac{C_{\beta,V}}{T^{\beta/2}},\label{mumoment}\\
        \sup_{z\in\Rnum}&\abs{\Pro\biggl(\frac{k}{T^{\beta-1}}(\muls-\mu)\biggr)-\Pro(Z\leq z)}\leq \frac{C_{\beta,V}}{T^{(1-\beta)/2}}.\label{thm_muls}
    \end{align}
\end{theorem}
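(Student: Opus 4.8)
The plan is to treat the two estimators separately, exploiting that the moment estimator $\hat\mu$ lives in a single Wiener chaos whereas $\muls$ is a genuine ratio of multiple integrals. For $\hat\mu$, writing the noise part of the model as $\int_0^t e^{-k(t-s)}\dif G_s$ and applying the stochastic Fubini theorem, I would first record the exact decomposition
$$\hat\mu-\mu=-\mu\,\frac{1-e^{-kT}}{kT}+\frac{1}{kT}\int_0^T\bigl(1-e^{-k(T-s)}\bigr)\dif G_s .$$
Consequently $X_T:=\tfrac{k}{T^{\beta-1}}(\hat\mu-\mu)$ is a first-chaos element shifted by a constant, i.e. $X_T\sim\mathcal N(m_T,\sigma_T^2)$ \emph{exactly}, and the quantity in \eqref{mumoment} is nothing but the Kolmogorov distance between two Gaussians, for which $\sup_z|\Pro(X_T\le z)-\Pro(Z\le z)|\le C(|m_T|+|\sigma_T^2-1|)$. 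It then remains to estimate the two defects: the bias is $|m_T|=|\mu|\,T^{-\beta}(1-e^{-kT})\le CT^{-\beta}$, while for the variance I would use \Cref{hyp_1} to write $\sigma_T^2=T^{-2\beta}\,\mathrm{Var}\bigl(\int_0^T(1-e^{-k(T-s)})\dif G_s\bigr)$, whose leading contribution is $\int_0^T\int_0^T\partial^2_{s r}R(s,r)\dif s\dif r=R(T,T)=T^{2\beta}$ (by self-similarity, $R(0,\cdot)=0$ and $\mE[G_1^2]=1$), the pieces weighted by $e^{-k(T-\cdot)}$ contributing only $O(T^{2\beta-1})$; hence $\sigma_T^2=1+O(T^{-1})$. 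Both defects are then bounded by $C_{\beta,V}T^{-\beta/2}$, which yields \eqref{mumoment}.

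For the least squares estimator I would set $W_t:=V_t-\mu$ (so $W_0=-\mu$ and $\dif W_t=-kW_t\dif t+\dif G_t$); a direct manipulation of the definition \eqref{k_ls} makes all the $\mu$-cross terms cancel and produces the exact identity
$$\muls-\mu=\frac{1}{T}\int_0^T W_t\dif t+R_T,\qquad R_T=\frac{(W_T+\mu)\bigl[T\int_0^T W_t^2\dif t-(\int_0^T W_t\dif t)^2\bigr]}{T\bigl[(W_T+\mu)\int_0^T W_t\dif t-T\int_0^T W_t\dif W_t\bigr]} .$$
Since $\frac1T\int_0^T W_t\dif t=\hat\mu-\mu$, the first summand is precisely the Gaussian already treated, so $\muls$ and $\hat\mu$ are asymptotically equivalent and I would write $\tfrac{k}{T^{\beta-1}}(\muls-\mu)=X_T+\tfrac{k}{T^{\beta-1}}R_T$. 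The numerator of $R_T$ is the product of the first-chaos factor $W_T+\mu$ with the quadratic functional $T\int W^2-(\int W)^2$ (a second-chaos variable plus a constant), so it carries a triple Wiener--It\^o integral, while its denominator contains at most a double integral; this is exactly the ratio for which the extension of Kim and Park's estimate to numerators and denominators with triple Wiener--It\^o integrals — the technical device of this paper — is built. Applying that ratio bound to $R_T$, after checking that its denominator concentrates about its leading part $-T\int_0^T W_t\dif W_t=kT\int_0^T W_t^2\dif t-T\int_0^T W_t\,\delta G_t\sim c\,T^2$, should deliver the bound $C_{\beta,V}T^{-(1-\beta)/2}$ of \eqref{thm_muls}.

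The delicate point, and the main obstacle, is the remainder $R_T$. Its denominator is itself a random second-chaos quantity which, although it concentrates about a positive multiple of $T^2$, is small on an event of non-negligible probability, so one cannot merely bound $R_T$ in $L^2$. The extended ratio estimate is designed to handle exactly this difficulty, controlling simultaneously the normal approximation of the third-chaos numerator and the lower tail of the denominator; the real labour is to feed the variance ratios and the fourth-moment (contraction) norms of these double and triple integrals into that estimate and to verify that the slowest surviving term is of order $T^{-(1-\beta)/2}$ — slower than the $T^{-\beta}$ produced by the Gaussian part $X_T$, and hence the one that governs \eqref{thm_muls}.
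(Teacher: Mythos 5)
For the moment estimator \eqref{mumoment} your argument is correct, and it takes a genuinely different route from the paper's. The paper isolates the exactly standard normal variable $G_T/T^{\beta}$, treats the remaining term $(-Z_T+\mu(\me^{-kT}-1))/T^{\beta}$ as a perturbation via \Cref{chang_lemma} with threshold $a=T^{-\beta/2}$ and a Chebyshev bound, and it is the $a/\sqrt{2\pi}$ term of that lemma that fixes the rate $T^{-\beta/2}$. You instead observe that $\frac{k}{T^{\beta-1}}(\hat{\mu}-\mu)$ is \emph{exactly} Gaussian (a deterministic bias plus a single first-chaos integral) and compare two Gaussian laws, getting $O(\abs{m_T}+\abs{\sigma_T^2-1})$. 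This buys a stronger rate, $O(T^{-\beta})$; in fact even the crude Cauchy--Schwarz bound $\abs{\mathrm{Cov}(G_T,Z_T)}\leq\sqrt{\mathrm{Var}(G_T)\,\mathrm{Var}(Z_T)}=O(T^{\beta})$, giving $\sigma_T^2=1+O(T^{-\beta})$, already suffices, so your finer $O(T^{-1})$ computation is not even needed.

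For \eqref{thm_muls} your algebraic identity is correct: with $A_W=\int_0^T W_t\dif t$, $B_W=\int_0^T W_t^2\dif t$, $C_W=\int_0^T W_t\dif W_t$ one indeed has $\muls-\mu=\frac{(W_T+\mu)B_W-C_WA_W}{(W_T+\mu)A_W-TC_W}=\frac{1}{T}A_W+R_T$ with your $R_T$, and $\frac1T A_W=\hat{\mu}-\mu$. The genuine gap is in how you handle $R_T$. \Cref{theorem_triple} (like Kim--Park's \Cref{kim2017}) is a \emph{normal-approximation} statement: it assumes a centered numerator and $\Psi_i(T)\to0$, and it bounds the Kolmogorov distance of the ratio to $\mathcal{N}(0,1)$; it provides no tail bound for a ratio, and it does not "control the lower tail of the denominator" --- positivity and concentration of the denominator are hypotheses feeding into the $\Psi_i$, not conclusions. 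Your $R_T$ does not fit this framework: it converges to $0$, and its numerator $(W_T+\mu)\bigl[TB_W-A_W^2\bigr]$ has mean of order $\mu\alpha T^2$, which is the \emph{same} order as its fluctuations (the $O(1)$-variance fluctuation of $V_T=W_T+\mu$ multiplies the bulk $\alpha T^2$), so no rescaling of $R_T$ is asymptotically standard normal and the hypotheses $\Psi_i(T)\to0$ cannot hold for it. The decisive estimate --- a bound on $\Pro\bigl(\abs{kT^{1-\beta}R_T}>a\bigr)$ --- is therefore missing from your plan.

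Moreover, even granted such a tail bound, you never invoke the device that converts "Gaussian principal part plus small perturbation" into a Kolmogorov bound, namely \Cref{chang_lemma}; and it is precisely the choice of threshold $a\sim T^{-(1-\beta)/2}$ in that lemma (through its $a/\sqrt{2\pi}$ term) that generates the exponent $(1-\beta)/2$, which your write-up asserts without any mechanism producing it. A correct completion along your lines is available and elementary: estimate $\Pro\bigl(\abs{kT^{1-\beta}R_T}>a\bigr)\leq\Pro\bigl(\abs{(W_T+\mu)(TB_W-A_W^2)}>c\,a\,T^{2+\beta}\bigr)+\Pro\bigl((W_T+\mu)A_W-TC_W<\tfrac12 k\alpha T^2\bigr)$, control both terms by Chebyshev (using second and fourth moment bounds for fixed-chaos variables and the concentration of $B_W/T$ around $\alpha$), and then apply \Cref{chang_lemma}. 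This mirrors the paper's own architecture --- there the zero-mean ratio is handled by \Cref{theorem_triple}, the non-centered remainder $I_0^*/H_T^*$ by an almost-sure bound, and the pieces are glued by \Cref{chang_lemma} --- except that in your decomposition the principal part is exactly Gaussian, so the entire burden falls on the remainder, which is exactly the piece you delegated to the wrong tool.
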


\section{Preliminary}\label{preliminary}
In this section, we recall some basic facts about Malliavin calculus with respect to Gaussian process. The reader is referred to \cite{nourdin_normal_2012,peccati_gaussian_2007,nualart_malliavin_2006} for a more detailed explanation. Let $G=\{G_t,t\in[0,T]\}$ be a continuous centered Gaussian process with $G_0=0$ and covariance function 
\begin{equation}
    \mathbb{E}(G_{t}G_{s})=R(s,t),\,\, s,t\in[0,T],
\end{equation}
defined on a complete probability space $(\Omega,\mathscr{F},\Pro)$, where $\mathscr{F}$ is generated by the Gaussian family $G$.
Denote $\mathcal{E}$ as the the space of all real valued step functions on $[0,T]$. The Hilbert space $\mathfrak{H}$ is
defined as the closure of $\mathcal{E}$ endowed with the inner product:
\begin{equation}
    \langle \mathbbm{1}_{[a,b)},\mathbbm{1}_{[c,d)}\rangle_{\mathfrak{H}}=\mathbb{E}((G_{b}-G_{a})(G_{d}-G_{c})).
    \label{inner_pro}
\end{equation}
We denote $G=\{G(h), h\in{\mathfrak{H}}\}$ as the isonormal Gaussian process on the probability space, indexed by the elements in $\mathfrak{H}$, which satisfies the following isometry relationship:
\begin{equation}
    \mE\bigl(G(h)\bigr)=0,\,\,\mathbb{E}[G(g)G(h)]=\langle g,h\rangle_{\mathfrak{H}},\quad \forall\ g,h \in\mathfrak{H}.
    \label{isometry}
\end{equation}

The following Proposition shows the inner products representation of the Hilbert space $\mathfrak{H}$ \cite{jolis_wiener_2007}.
\begin{proposition}[\cite{chen_parameter_2021} Proposition 2.1]
    Denote $\mathcal{V}_{[0,T]}$ as the set of bounded variation functions on $[0,T]$. Then $\mathcal{V}_{[0,T]}$ is dense in $\mathfrak{H}$ and
    \begin{equation*}
        \langle f,g\rangle_\mathfrak{H}=\int_{[0,T]^{2}}R(t,s){v}_{f}(\dif t){v}_{g}(\dif s),  \quad    \forall\ f,g\in \mathcal{V}_{[0,T]},
    \end{equation*}
    where $v_g$ is the Lebesgue-Stieljes signed measure associated with $g^0$ defined as
    \begin{equation*}
        g^{0}= \left\{
        \begin{array}{rcr}
            g(x),&& {\text{if}\ x\in [0,T];}\\
            0,&& \text{otherwise}.\\
        \end{array}
    \right.
    \end{equation*}
    When the covariance function $R(t,s)$ satisfies \Cref{hyp_1}, 
    \begin{align}
        \langle f,g\rangle_\mathfrak{H}=\int_{[0,T]^{2}} f(t)g(s)\frac{\partial ^{2}R(t,s)}{\partial t\partial s}\dif t\dif s,  \quad  \forall\ f,g\in \mathcal{V}_{[0,T]}.
        \label{inner product 000}
    \end{align}
    Furthermore, the norm $\left\Vert\cdot\right\Vert_{\mathfrak{H}}$ of the elements in $\mathfrak{H}$ can be induced naturally:
    \begin{equation*}
        \left\Vert\phi\right\Vert_{\mathfrak{H}}^{2}= \int_{[0,T]^{2}}\phi(r_{1})\phi(r_{2})\frac{\partial ^{2}R(r_1,r_2)}{\partial r_1\partial r_2}\dif r_{1}\dif r_{2},\;\;\;\forall\ \phi\in\mathfrak{H}.
    \end{equation*}
\end{proposition}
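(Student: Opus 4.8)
The plan is to establish the representation in three stages: first verify the Lebesgue--Stieltjes formula on the step functions $\mathcal{E}$, where the inner product is prescribed directly by \eqref{inner_pro}; then extend it to all of $\mathcal{V}_{[0,T]}$ by density and bilinearity; and finally convert the Stieltjes representation into the Lebesgue integral against $\partial^{2}R/\partial t\partial s$ by integration by parts, invoking the boundary condition $R(0,t)=0$ from \Cref{hyp_1}. For the first stage, take indicators $f=\mathbbm{1}_{[a,b)}$ and $g=\mathbbm{1}_{[c,d)}$. Expanding the covariance bilinearly, \eqref{inner_pro} gives
\[
    \langle f,g\rangle_{\mathfrak{H}}=R(b,d)-R(b,c)-R(a,d)+R(a,c).
\]
On the other hand, the Lebesgue--Stieltjes measure of the zero-extended function $f^{0}$ is the atomic measure $v_{f}=\delta_{a}-\delta_{b}$, and likewise $v_{g}=\delta_{c}-\delta_{d}$; evaluating $\int_{[0,T]^{2}}R(t,s)\,v_{f}(\dif t)\,v_{g}(\dif s)$ against these atoms reproduces exactly the same four terms. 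By bilinearity the identity extends to every finite linear combination of indicators, that is, to all of $\mathcal{E}$.

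For the second stage, since $\mathcal{E}\subseteq\mathcal{V}_{[0,T]}$ and $\mathfrak{H}$ is by definition the closure of $\mathcal{E}$, the density of $\mathcal{V}_{[0,T]}$ in $\mathfrak{H}$ is immediate. To see that an arbitrary bounded variation function defines a genuine element of $\mathfrak{H}$, I would approximate $f\in\mathcal{V}_{[0,T]}$ by step functions subordinate to increasingly fine partitions and check, through the bilinear form $\int R\,v_{f}\,v_{g}$, that the approximants are Cauchy in $\mathfrak{H}$; finiteness of this form follows from the local integrability of $R$ together with the finite total variation of $f$ and $g$. Passing to the limit then gives the first displayed formula of the Proposition for all $f,g\in\mathcal{V}_{[0,T]}$.

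The crux is the third stage. Assuming \Cref{hyp_1}, I would rewrite $\int_{[0,T]^{2}}R(t,s)\,v_{f}(\dif t)\,v_{g}(\dif s)$ by integrating by parts in each variable separately, writing the Stieltjes measure as $v_{f}(\dif t)=\dif f^{0}(t)$. Using that $f^{0}$ is zero-extended and that $R(0,s)=R(t,0)=0$, the boundary contributions at the origin drop out, and each integral against $\dif f$ is traded for a Lebesgue integral of $f$ against $\partial R/\partial t$; carrying this out in both $t$ and $s$ produces $\int_{[0,T]^{2}}f(t)g(s)\,\partial^{2}R(t,s)/\partial t\partial s\,\dif t\,\dif s$. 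Setting $f=g=\phi$ yields the norm formula. I expect the main obstacle to be precisely this double integration by parts with signed Stieltjes measures: one must justify the interchange of the order of integration, verify that all boundary and endpoint-atom terms genuinely cancel --- which is exactly where $R(0,t)=0$ is needed --- and confirm that the diagonal singularity of the mixed partial does not obstruct the identity. The last point is controlled by the bound of \Cref{hyp_1}, namely $\bigl\lvert\partial^{2}R/\partial t\partial s\bigr\rvert\le C_{\beta}\abs{t-s}^{2\beta-2}+C_{\beta}^{\prime}\abs{ts}^{\beta-1}$, whose right-hand side is integrable over $[0,T]^{2}$ precisely because $\beta>\tfrac12$, so the resulting Lebesgue integral converges.
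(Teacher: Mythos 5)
The paper itself contains no proof of this Proposition: it is imported verbatim from Chen and Zhou \cite{chen_parameter_2021} (Proposition 2.1), whose argument (building on Jolis \cite{jolis_wiener_2007}) runs exactly along your three stages --- the indicator computation against \eqref{inner_pro}, the density/approximation step identifying $\mathcal{V}_{[0,T]}$ inside $\mathfrak{H}$, and the Fubini/integration-by-parts step in which $R(0,t)=0$ kills the boundary terms and $\beta>\tfrac12$ makes $\partial^{2}R/\partial t\partial s$ integrable on $[0,T]^{2}$. Your reconstruction is correct and is essentially that same proof; the one loose phrase is in stage two, where the Cauchy property of the step-function approximants requires the continuity of $R$ on $[0,T]^{2}$ (available here because $R$ is the double integral of an $L^{1}$ mixed partial), not merely its local integrability.
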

\begin{remarkn}[\cite{chen_parameter_2021} Notation 1]
    Let $C_{\beta}$ and $C_{\beta}^{\prime}$ be the constants given in \Cref{hyp_1}. For any $\phi(r)\in\mathcal{V}_{[0,T]}$, we define two norms as
    \begin{align*}
        \norm{\phi}_{\mathfrak{H}_1}^{2}&= C_{\beta}\int_{[0,T]^{2}}\phi(r_1)\phi(r_2)\abs{r_1-r_2}^{2\beta-2}\dif r_1\dif r_2,\\
        \norm{\phi}_{\mathfrak{H}_2}^{2}&= C_{\beta}^{\prime}\int_{[0,T]^{2}}\abs{\phi(r_1)\phi(r_2)}(r_1 r_2)^{\beta-1}\dif r_1\dif r_2.
    \end{align*}
    For any $\varphi(r,s)$ in $[0,T]^2$, define an operator $K$ from $\mathcal{V}_{[0,T]}^{\otimes 2}$ to $\mathcal{V}_{[0,T]}$ to be 
    \begin{equation}
        (K\varphi)(r)=\int_{0}^{T}\abs{\varphi(r,u)}u^{\beta-1}\dif u.
    \end{equation}
\end{remarkn}
\begin{proposition}[\cite{chen_parameter_2021} Proposition 3.2]\label{prop_norm_ieq}
    Suppose that \Cref{hyp_1} holds, then for any $\phi(r)\in\mathcal{V}_{[0,T]}$,
    \begin{equation}
        \abs{\norm{\phi}_{\mathfrak{H}}^{2}-\norm{\phi}_{\mathfrak{H}_{1}}^{2}}\leq \norm{\phi}_{\mathfrak{H}_{2}}^{2},
        \label{2.9}
    \end{equation}
    and for any $\varphi,\psi \in (\mathcal {V}_{[0,T]})^{\odot 2}$,
    \begin{align*}
        \abs{\norm{\phi}_{\mathfrak{H}^{\otimes{2}}}^{2}-\norm{\phi}_{\mathfrak{H}_{1}^{\otimes{2}}}^{2}}& \leq \norm{\phi}_{\mathfrak{H}_{2}^{\otimes{2}}}^{2}+2C_{\beta}^{\prime}\norm{K\varphi}_{\mathfrak{H}_{1}}^{2},\\
        \abs{\langle\varphi,\psi\rangle_{\mathfrak{H}^{\otimes{2}}}-\langle\varphi,\psi\rangle_{\mathfrak{H}_{1}^{\otimes{2}}}}& \leq \abs{\langle\varphi,\psi\rangle_{\mathfrak{H}_{2}^{\otimes{2}}}}+2C_{\beta}^{\prime}\abs{\langle K\varphi,K\psi\rangle_{\mathfrak{H}_{1}}}.
    \end{align*}
\end{proposition}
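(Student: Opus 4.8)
The plan is to reduce every estimate to the pointwise bound $\abs{\Psi(t,s)}\leq C_\beta^{\prime}\abs{ts}^{\beta-1}$ of \Cref{hyp_1}, applied after the decomposition $\frac{\partial^{2}R}{\partial t\partial s}=C_\beta\abs{t-s}^{2\beta-2}+\Psi$. For the scalar inequality \eqref{2.9} I would first rewrite the difference as a single integral over $[0,T]^{2}$, using the kernel representation of $\normh{\cdot}$:
\begin{align*}
    \norm{\phi}_{\mathfrak{H}}^{2}-\norm{\phi}_{\mathfrak{H}_{1}}^{2}&=\int_{[0,T]^{2}}\phi(r_1)\phi(r_2)\Bigl(\tfrac{\partial^{2}R(r_1,r_2)}{\partial r_1\partial r_2}-C_\beta\abs{r_1-r_2}^{2\beta-2}\Bigr)\dif r_1\dif r_2\\
    &=\int_{[0,T]^{2}}\phi(r_1)\phi(r_2)\Psi(r_1,r_2)\dif r_1\dif r_2.
\end{align*}
Pulling the absolute value inside the integral and invoking the bound on $\abs{\Psi}$ produces exactly $\norm{\phi}_{\mathfrak{H}_2}^{2}$, which is \eqref{2.9}.

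For the two $\mathfrak{H}^{\otimes2}$ statements I would run the same argument one dimension higher. Writing the kernel of $\mathfrak{H}^{\otimes2}$ as the product $\frac{\partial^{2}R(r_1,s_1)}{\partial r_1\partial s_1}\frac{\partial^{2}R(r_2,s_2)}{\partial r_2\partial s_2}$ and substituting the decomposition into each factor expands the quadratic form into four pieces; the product of the two $C_\beta\abs{\cdot}^{2\beta-2}$ factors is precisely $\norm{\varphi}_{\mathfrak{H}_1^{\otimes2}}^{2}$ and cancels against the subtracted term. Abbreviating the four-fold differential by $\dif r\dif s$, this leaves
\begin{align*}
    \norm{\varphi}_{\mathfrak{H}^{\otimes2}}^{2}-\norm{\varphi}_{\mathfrak{H}_1^{\otimes2}}^{2}&=\int_{[0,T]^{4}}\varphi(r_1,r_2)\varphi(s_1,s_2)\Psi(r_1,s_1)\Psi(r_2,s_2)\dif r\dif s\\
    &\quad+C_\beta\int_{[0,T]^{4}}\varphi(r_1,r_2)\varphi(s_1,s_2)\abs{r_1-s_1}^{2\beta-2}\Psi(r_2,s_2)\dif r\dif s\\
    &\quad+C_\beta\int_{[0,T]^{4}}\varphi(r_1,r_2)\varphi(s_1,s_2)\Psi(r_1,s_1)\abs{r_2-s_2}^{2\beta-2}\dif r\dif s.
\end{align*}
Bounding both $\Psi$ factors in the first term by the hypothesis gives $\norm{\varphi}_{\mathfrak{H}_2^{\otimes2}}^{2}$.

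The decisive step, and the one I expect to be the genuine obstacle, is handling the two mixed terms, which is exactly where the operator $K$ is engineered to enter. In, say, the second term I would bound $\abs{\Psi(r_2,s_2)}\leq C_\beta^{\prime}r_2^{\beta-1}s_2^{\beta-1}$ and integrate out the pair $(r_2,s_2)$ first; because the weight factorizes, $\int_0^{T}\abs{\varphi(r_1,r_2)}r_2^{\beta-1}\dif r_2=(K\varphi)(r_1)$, and the remaining integral of $(K\varphi)(r_1)(K\varphi)(s_1)$ against $\abs{r_1-s_1}^{2\beta-2}$ is precisely $\norm{K\varphi}_{\mathfrak{H}_1}^{2}/C_\beta$. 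Hence each mixed term is at most $C_\beta^{\prime}\norm{K\varphi}_{\mathfrak{H}_1}^{2}$, and adding the three contributions yields the claimed bound. The inner-product inequality is the polarized version of the same computation, with one copy of $\varphi$ replaced by $\psi$ and the squared norms replaced by the corresponding absolute-value bilinear forms; the only care needed is to keep the absolute values consistent across the triangle inequality and to note that the kernels $\abs{r-s}^{2\beta-2}$ and $r^{\beta-1}$ are integrable on $[0,T]$ since $\beta\in(1/2,1)$, so all the interchanges of integration are justified.
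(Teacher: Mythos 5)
This proposition is imported by the paper from \cite{chen_parameter_2021} (Proposition 3.2) without any in-paper proof, so there is nothing internal to compare against; your derivation is correct and is essentially the standard argument behind the cited result: substitute the decomposition of $\frac{\partial^2 R}{\partial t\partial s}$ from \Cref{hyp_1}, expand the quadratic form into four pieces (the pure $C_\beta$-piece cancelling, the pure $\Psi$-piece giving the $\mathfrak{H}_2$-term, the mixed pieces giving the $K$-terms), and polarize for the inner-product version. The only step worth making explicit is that $K$ integrates the \emph{second} argument of $\varphi$, so reducing the mixed term containing $\Psi(r_1,s_1)$ to $(K\varphi)(r_2)(K\psi)(s_2)$ uses the symmetry assumption $\varphi,\psi\in(\mathcal{V}_{[0,T]})^{\odot 2}$.
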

Let $\mathfrak{H}^{\otimes p}$ and $\mathfrak{H}^{\odot p}$ be the $p$-th tensor product and the $p$-th symmetric tensor product of $\mathfrak{H}$. For every $p\geq 1$, denote $\mathcal{H}_p$ as the $p$-th Wiener chaos of $G$. It is defined as the closed linear subspace of $L^2(\Omega)$ generated by $\{H_{p}(G(h)): h\in{\mathfrak{H}},\normh{h}=1\}$, where $H_p$ is the $p$-th Hermite polynomial. Let $h\in \mathfrak{H}$ such that $\normh{h}=1$, then for every $p\geq 1$ and $h\in\mathfrak{H}$, 
\begin{equation*}
    I_{p}(h^{\otimes p})= H_{p}\bigl(G(h)\bigr),
\end{equation*}
where $I_p(\cdot)$ is the $p$-th Wiener-It\^{o} stochastic integral.

Denote $\{e_k,k\geq 1\}$ as a complete orthonormal system in $\mathfrak{H}$. The $q$-th contraction between $f\in\mathfrak{H}^{\odot m}$ and $g\in\mathfrak{H}^{\odot n}$ is an element in $\mathfrak{H}^{\otimes (m+n-2q)}$: 
\begin{align*}
    f \otimes_q g =\sum_{i_1,\cdots,i_q=1}^{\infty}\langle f,e_{i_1},\cdots,e_{i_q}\rangle_{\mathfrak{H}^{\otimes q}}\otimes \langle g,e_{i_1},\cdots,e_{i_q}\rangle_{\mathfrak{H}^{\otimes q}},\;\;\; q= 1,\cdots,m\wedge n.
\end{align*}
The following proposition shows the product formula for the multiple integrals. 
\begin{proposition}[\cite{nourdin_normal_2012} Theorem 2.7.10]\label{lemma_mutli_equa}
    Let $f\in\mathfrak{H}^{\odot p}$ and $g\in\mathfrak{H}^{\odot q}$ be two symmetric function. Then
    \begin{equation}
        I_{p}(f)I_{q}(g)=\sum_{r=0}^{p\wedge q}r!\tbinom{p}{r}\tbinom{q}{r}I_{p+q-2r}(f\tilde{\otimes}_{r}g),
        \label{mutil_n}
    \end{equation} 
    where $f\tilde{\otimes}_{r}g$ is the symmetrization of $f {\otimes}_{r}g$.
\end{proposition}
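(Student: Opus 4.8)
The plan is to reduce the general identity \eqref{mutil_n} to the classical one-dimensional product formula for Hermite polynomials, exploiting the fact that multiple integrals taken along orthogonal directions factorize into products of Hermite polynomials of independent standard Gaussians. The two ingredients I would rely on are the $L^2$-isometry $\mE[I_p(f)^2]=p!\,\normh{f}^2$, together with the continuity of the contractions $\otimes_r$ and of symmetrization, and the elementary identity $H_m(x)H_n(x)=\sum_{r=0}^{m\wedge n} r!\tbinom{m}{r}\tbinom{n}{r}H_{m+n-2r}(x)$.

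First I would reduce, by multilinearity and density, to a very special pair $(f,g)$. Since linear combinations of symmetrized tensors $\tilde{\otimes}_{j}\,\phi_j^{\otimes a_j}$ built from finitely many orthonormal $\phi_1,\dots,\phi_m\in\mathfrak{H}$ are dense in $\mathfrak{H}^{\odot p}$, and since both sides of \eqref{mutil_n} depend continuously on $(f,g)$—the right-hand side because the contractions $\otimes_r$ are bounded bilinear maps and each $I_{p+q-2r}$ is an isometry up to a constant, and the left-hand side because $I_p$ maps into a fixed Wiener chaos, on which all $L^r$-norms are equivalent by hypercontractivity, so $f_n\to f,\,g_n\to g$ forces $I_p(f_n)I_q(g_n)\to I_p(f)I_q(g)$ in $L^2$—it suffices to verify the formula for such tensors over a common orthonormal set. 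Writing $f=\tilde{\otimes}_j\,\phi_j^{\otimes a_j}$ and $g=\tilde{\otimes}_j\,\phi_j^{\otimes b_j}$ with $\sum_j a_j=p$, $\sum_j b_j=q$, I would use the structural fact that $I_p(f)=\prod_{j=1}^m H_{a_j}(G(\phi_j))$ and $I_q(g)=\prod_{j=1}^m H_{b_j}(G(\phi_j))$, where $G(\phi_1),\dots,G(\phi_m)$ are i.i.d. standard normal because the $\phi_j$ are orthonormal.

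Next I would multiply these two products factor by factor and apply the one-dimensional Hermite product formula in each coordinate $j$, producing a sum over a multi-index $(r_1,\dots,r_m)$ with $0\le r_j\le a_j\wedge b_j$. Re-reading $\prod_j H_{a_j+b_j-2r_j}(G(\phi_j))$ as a single multiple integral identifies it with $I_{p+q-2r}$ of a symmetrized tensor in which $r=\sum_j r_j$ pairs of slots have been contracted. The remaining task is to recognize that collecting all multi-indices with $\sum_j r_j=r$ reproduces exactly $r!\tbinom{p}{r}\tbinom{q}{r}I_{p+q-2r}(f\tilde{\otimes}_r g)$: the contraction $f\otimes_r g$ is, by definition, the sum over all ways of pairing $r$ indices of $f$ with $r$ indices of $g$ and tracing out the matched $\phi_j$, and after symmetrization the per-coordinate coefficients $\prod_j r_j!\tbinom{a_j}{r_j}\tbinom{b_j}{r_j}$ must aggregate to the stated coefficient.

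The hard part will be precisely this combinatorial bookkeeping: one must verify the Vandermonde-type identity $\sum_{r_1+\cdots+r_m=r}\prod_j r_j!\tbinom{a_j}{r_j}\tbinom{b_j}{r_j}=r!\tbinom{p}{r}\tbinom{q}{r}$ while simultaneously tracking the symmetrization normalizations so that the multi-index sum assembles into $f\tilde{\otimes}_r g$ without spurious factors. An alternative route that sidesteps the multivariate combinatorics is an induction on $q$: the base case $q=1$ follows from $I_p(f)I_1(g)=I_{p+1}(f\tilde{\otimes}g)+p\,I_{p-1}(f\otimes_1 g)$, obtained from the duality $F\,\delta(g)=\delta(Fg)+\iproct{DF}{g}$ together with the derivative rule $D_s I_p(f)=p\,I_{p-1}\big(f(\cdot,s)\big)$, which yields $\iproct{DI_p(f)}{g}=p\,I_{p-1}(f\otimes_1 g)$ and $\delta(I_p(f)\,g)=I_{p+1}(f\tilde{\otimes}g)$; one then raises $q$ by expressing $I_{q+1}$ through the same base identity and invoking the induction hypothesis, at the cost of repeatedly reorganizing nested contractions. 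I would pursue the Hermite reduction as the primary line and keep the inductive argument in reserve to cross-check the coefficients.
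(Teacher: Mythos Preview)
The paper does not prove this proposition at all: it is quoted as a preliminary result from \cite{nourdin_normal_2012} (their Theorem 2.7.10) and used as a black box later on. So there is no ``paper's own proof'' to compare against.

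That said, your sketch is essentially the standard proof one finds in Nourdin--Peccati: reduce by density and multilinearity to elementary tensors built from a finite orthonormal family, use the identification $I_p\bigl(\tilde{\otimes}_j\,\phi_j^{\otimes a_j}\bigr)=\prod_j H_{a_j}(G(\phi_j))$, apply the one-variable Hermite product formula coordinatewise, and then reassemble. Your alternative inductive route via $I_p(f)I_1(g)=I_{p+1}(f\tilde{\otimes}g)+p\,I_{p-1}(f\otimes_1 g)$ is also a well-known argument. The only place to be careful is exactly where you flagged it: the combinatorial identity you wrote down is not literally an identity of numbers but must be read together with the symmetrization weights hidden in $f\tilde{\otimes}_r g$, since the contraction of two symmetrized elementary tensors already carries multinomial factors. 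If you carry those normalizations through (or simply run the induction on $q$), the argument closes.
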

We then introduce the derivative operator and the divergence operator. For these details, see sections 2.3-2.5 of  \cite{nourdin_normal_2012}. 
Let $\mathscr{T}$ be the class of smooth random variables of the form:
\begin{equation*}
    F= f\bigl(G(\psi_1),G(\psi_2),\cdots, G(\psi_n)\bigr),
\end{equation*}
where $n\geq 1$, $f\in\mathcal{C}_b^\infty(\mathbb{R}^n)$ which partial derivatives have at most polynomial growth, and for $i=1,\cdots,n$, $\psi_i\in \mathfrak{H}$. Then, the Malliavin derivative of $F$ (with respect to $G$) is the element of $L^2(\Omega,\mathfrak{H})$ defined by 
\begin{equation*}
    D F = \sum_{i_=1}^n \frac{\partial f}{\partial x_i} \bigl(G(\psi_1),\cdots, G(\psi_n)\bigr)\psi_i.
\end{equation*}
Given $q\in[1,\infty)$ and integer $p\geq 1$, let $\mathbb{D}^{p,q}$ denote the closure of $\mathscr{T}$ with respect to the norm
\begin{equation*}
    \norm{F}_{\mathbb{D}^{p,q}} = \Bigl[\mE(\abs{F}^q)+ \sum_{k=1}^p \mE(\norm{D^k F}^q_{\mathfrak{H}\otimes k})\Bigr]^{1/q}.
\end{equation*}
Denote $\delta$ (the divergence operator) as the adjoint of $D$. The domain of $\delta$ is composed of those elements:
\begin{equation*}
    \abs{\mE[\langle D^p F, u\rangle_{\mathfrak{H}\otimes p}]}\leq C[\mE(\abs{F}^2)]^{\frac{1}{2}},\;\;\; \forall\ F\in \mathbb{D}^{p,2},
\end{equation*}
and is denoted by $\mathrm{Dom}(\delta)$. If $u\in \mathrm{Dom}(\delta)$, then $\delta(u)$ is the unique element of  $L^2(\Omega)$ characterized by the duality formula:
\begin{equation*}
    \mE [F\delta (u)] =\mE (\langle D F, u\rangle_{\mathfrak{H} }), \;\;\; \forall\ F\in \mathbb{D}^{1,2}.
\end{equation*}

We now introduce the infinitesimal generator $L$ of the Ornstein-Uhlenbeck semigroup. Let $F\in L^2(\Omega)$ be a square integrable random variable. Denote $\mathbb{J}_n : L^2(\Omega)\to \mathcal{H}_n$ as the orthogonal projection on the $n$-th Wiener chaos $\mathcal{H}_n$. The operator $L$ is defined by $L F=- \sum_{n=0}^\infty n \mathbb{J}_n F$. The domain of $L$ is 
\begin{align*}
    \mathrm{Dom} (L) := \{F\in L^2(\Omega), \sum_{p=1}^\infty p^2 \mE [\mathbb{J}_p(F)^2]\leq \infty\}.
\end{align*}
For any $F\in L^2(\Omega)$, define $L^{-1} F =- \sum_{n=1}^\infty \frac{1}{n}\mathbb{J}_n F$. $L^{-1}$ is called the Pseudo-inverse of $L$. Note that $L^{-1} F\in\mathrm{Dom} (L)$ and $L L^{-1} F= F - \mE (F)$ holds for any $F\in L^2(\Omega)$.

The following \Cref{chang_lemma} provides the \bE upper bound on the sum of two random variables.

\begin{lemma}[\cite{chang1989berry} Lemma 2]\label{chang_lemma}
    For any variable $\xi,\eta$ and $a\in\Rnum$, the following inequality holds:
    \begin{equation}
        \sup_{z\in\mathbb{R}}\abs{\Pro(\xi+\eta\leq z)-\Phi(z)}\leq \sup_{z\in\mathbb{R}}\abs{\Pro(\xi\leq z)-\Phi(z)}+\Pro(\abs{\eta}>a)+\frac{a}{\sqrt{2\pi}},
        \label{chang_ieq}
    \end{equation}
    where $\Phi(z)$ is the standard Normal distribution function. 
\end{lemma}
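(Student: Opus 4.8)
The plan is to prove the two-sided estimate by a standard truncation argument: I would split the event $\{\xi+\eta\le z\}$ according to whether the error term $\eta$ is small or large, and then control the resulting shift in the argument of $\Phi$ using the Lipschitz continuity of the Gaussian distribution function. The only genuinely interesting case is $a>0$, which I assume throughout (if $a<0$ then $\Pro(\abs{\eta}>a)=1$ and the right-hand side already exceeds $1$, and the degenerate case $a=0$ follows by letting $a\downarrow 0$). Fix $z\in\Rnum$.

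For the upper bound I would write $\Pro(\xi+\eta\le z)=\Pro(\xi+\eta\le z,\,\abs{\eta}\le a)+\Pro(\xi+\eta\le z,\,\abs{\eta}>a)$. On the event $\{\abs{\eta}\le a\}$ one has $\eta\ge -a$, hence $\xi\le z-\eta\le z+a$, which gives the inclusion $\{\xi+\eta\le z\}\cap\{\abs{\eta}\le a\}\subseteq\{\xi\le z+a\}$ and therefore $\Pro(\xi+\eta\le z)\le\Pro(\xi\le z+a)+\Pro(\abs{\eta}>a)$. I would then insert $\Phi(z+a)$ and decompose $\Pro(\xi\le z+a)-\Phi(z)=[\Pro(\xi\le z+a)-\Phi(z+a)]+[\Phi(z+a)-\Phi(z)]$. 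The first bracket is at most $\sup_{z}\abs{\Pro(\xi\le z)-\Phi(z)}$, while the second equals $\int_z^{z+a}\phi(t)\dif t\le\frac{a}{\sqrt{2\pi}}$, since the standard Gaussian density $\phi(t)=\frac{1}{\sqrt{2\pi}}\me^{-t^2/2}$ is bounded by $\frac{1}{\sqrt{2\pi}}$. Combining these yields $\Pro(\xi+\eta\le z)-\Phi(z)\le\sup_{z}\abs{\Pro(\xi\le z)-\Phi(z)}+\Pro(\abs{\eta}>a)+\frac{a}{\sqrt{2\pi}}$.

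For the matching lower bound I would argue symmetrically: on $\{\abs{\eta}\le a\}$ one has $\eta\le a$, so $\{\xi\le z-a\}\cap\{\abs{\eta}\le a\}\subseteq\{\xi+\eta\le z\}$, whence $\Pro(\xi+\eta\le z)\ge\Pro(\xi\le z-a)-\Pro(\abs{\eta}>a)$. Splitting $\Pro(\xi\le z-a)-\Phi(z)$ around $\Phi(z-a)$ and using the same uniform density bound produces the reverse inequality with the identical three error terms. Taking the supremum over $z\in\Rnum$ and merging the two directions gives the claim. This argument is elementary; the only points requiring care are fixing the two event inclusions in the correct direction and recording that both the forward and backward shifts of the argument of $\Phi$ cost at most $\frac{a}{\sqrt{2\pi}}$, so there is no real obstacle beyond this bookkeeping.
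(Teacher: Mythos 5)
Your truncation argument is correct where it matters, and it is the standard proof of this inequality; note, however, that the paper itself gives no proof to compare against --- \Cref{chang_lemma} is imported verbatim from Chang and Rao \cite{chang1989berry}, so your write-up in effect supplies the omitted argument. Both event inclusions, the insertion of $\Phi(z\pm a)$, and the Lipschitz bound $\abs{\Phi(z\pm a)-\Phi(z)}\leq a/\sqrt{2\pi}$ via $\phi(t)\leq 1/\sqrt{2\pi}$ are exactly right for $a>0$, and taking the supremum at the end is legitimate since your bound is uniform in $z$. The one genuine slip is your parenthetical dismissal of $a<0$: you claim the right-hand side ``already exceeds $1$'', but for $a<0$ the term $a/\sqrt{2\pi}$ is \emph{negative}, so the right-hand side can be strictly below $1$ while the left-hand side is arbitrarily close to $1$; indeed the inequality as stated is false for $a<0$ (take $\xi$ standard normal, so the supremum term vanishes, and $\eta=-M$ a large negative constant: the left side tends to $1$ as $M\to\infty$, while the right side equals $1+a/\sqrt{2\pi}<1$). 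So restricting to $a>0$ is a necessity, not a convenience, and the lemma's phrase ``any $a\in\Rnum$'' is itself inaccurate --- harmlessly so for the paper, since every application there takes $a$ positive (of the form $T^{-\beta/2}$, $T^{-1/3}$, or $C_1 T^{\beta-3/4}$). Your limiting argument for $a=0$ is fine by continuity of measure along $\{\abs{\eta}>a\}\uparrow\{\abs{\eta}>0\}$.
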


Using Malliavin calculus, Kim and Park \cite{kim2017optimal} provide the \bE  upper bound of the quotient of two random variables.

Let $F_T\in\mathbb{D}^{1,2}$ be a zero-mean process, and $G_T\in\mathbb{D}^{1,2}$ satisfies $G_T >0$ a.s.. For simplicity, we define the following four functions:
\begin{align*}
    \Psi_1(T) &= \frac{1}{(\mE{G_T})^2} \sqrt{\mE\Bigl[\Bigl((\mE G_T)^2-(\langle DF_T ,- DL^{-1} F_T\rangle_{\mathfrak{H}})\Bigr)^2\Bigr]},\\
    \Psi_2(T) &= \frac{1}{(\mE{G_T})^2} \sqrt{\mE\Bigl[\langle DF_T,-DL^{-1}(G_T-\mE G_T)\rangle_{\mathfrak{H}}^2\Bigr]},\\
    \Psi_3(T) &= \frac{1}{(\mE{G_T})^2} \sqrt{\mE\Bigl[\langle DG_T,-DL^{-1}F_T\rangle_{\mathfrak{H}}^2\Bigr]},\\
    \Psi_4(T) &= \frac{1}{(\mE{G_T})^2} \sqrt{\mE \Bigl[\langle DG_T ,-DL^{-1}(G_T-\mE G_T)\rangle_{\mathfrak{H}}^2\Bigr]}.
\end{align*}

\begin{theorem}[\cite{kim2017optimal} Theorem 2 and Corollary 1]\label{kim2017}
    Let $Z$ be a standard Normal variable. Assuming that for every $z\in\Rnum$, $F_T+zG_T$ has an absolutely continuous law with respect to Lebesgue measure and $\Psi_i(T)\to 0$, $i=1,\cdots,4$, as $T\to\infty$. Then, there exists a constant $c$ such that for $T$ large enough,
    \begin{align*}
        \sup_{z\in\Rnum} \abs{\Pro \Bigl(\frac{F_T}{G_T}\leq z\Bigr)-\Pro(Z\leq z)}&\leq c\cdot \max_{i=1,\cdots,4} \Psi_i(T).
    \end{align*}
\end{theorem}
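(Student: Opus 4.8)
Since $G_T>0$ almost surely, $\{F_T/G_T\le z\}=\{F_T-zG_T\le 0\}$, so the task is to compare $\Pro(F_T-zG_T\le 0)$ with $\Phi(z)=\Pro(Z\le z)$ uniformly in $z\in\Rnum$. Because $\mE F_T=0$, I would centre by introducing
\[
    \tilde{U}_z:=\frac{F_T-z(G_T-\mE G_T)}{\mE G_T},\qquad \mE\tilde{U}_z=0,
\]
for which a one-line rearrangement gives $\{F_T-zG_T\le 0\}=\{\tilde{U}_z\le z\}$. Hence it suffices to prove $\sup_{z}\abs{\Pro(\tilde{U}_z\le z)-\Phi(z)}\le c\max_i\Psi_i(T)$; the four functionals $\Psi_i$ should then appear as the ingredients measuring how far the centred family $\tilde U_z$ is from standard normal.

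For each fixed $z$ I would run the Malliavin--Stein scheme. Let $f_z$ solve $f_z'(x)-xf_z(x)=\mathbbm{1}_{(-\infty,z]}(x)-\Phi(z)$, a bounded solution with $\norm{f_z'}_\infty\le 1$, so that $\Pro(\tilde U_z\le z)-\Phi(z)=\mE[f_z'(\tilde U_z)-\tilde U_z f_z(\tilde U_z)]$. Using $LL^{-1}=\mathrm{Id}-\mE$ together with $L=-\delta D$ and the $\delta$--$D$ duality stated above, the centred variable obeys $\mE[\tilde U_z f_z(\tilde U_z)]=\mE[f_z'(\tilde U_z)\langle D\tilde U_z,-DL^{-1}\tilde U_z\rangle_{\mathfrak H}]$, whence
\[
    \Pro(\tilde U_z\le z)-\Phi(z)=\mE\Big[f_z'(\tilde U_z)\big(1-\langle D\tilde U_z,-DL^{-1}\tilde U_z\rangle_{\mathfrak H}\big)\Big].
\]
Writing $D\tilde U_z=(\mE G_T)^{-1}(DF_T-z\,DG_T)$ and $-DL^{-1}\tilde U_z=(\mE G_T)^{-1}(-DL^{-1}F_T+z\,DL^{-1}(G_T-\mE G_T))$ and expanding the inner product, the bracket separates into four pieces: a $z$-free term governed by $(\mE G_T)^2-\langle DF_T,-DL^{-1}F_T\rangle_{\mathfrak H}$, two terms linear in $z$ built from the cross inner products $\langle DF_T,-DL^{-1}(G_T-\mE G_T)\rangle_{\mathfrak H}$ and $\langle DG_T,-DL^{-1}F_T\rangle_{\mathfrak H}$, and a term in $z^2$ built from $\langle DG_T,-DL^{-1}(G_T-\mE G_T)\rangle_{\mathfrak H}$. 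Bounding $\abs{f_z'}\le1$ and applying the Cauchy--Schwarz inequality to each piece reproduces exactly
\[
    \abs{\Pro(\tilde U_z\le z)-\Phi(z)}\le \Psi_1(T)+\abs{z}\big(\Psi_2(T)+\Psi_3(T)\big)+z^2\Psi_4(T).
\]

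The main obstacle is that this pointwise estimate grows polynomially in $z$ and therefore cannot be supremised over $\Rnum$ directly. I would handle it by splitting at a threshold $M$: on $\abs z\le M$ the bound is at most $c(1+M^2)\max_i\Psi_i(T)$, while for $\abs z> M$ I would estimate $\abs{\Pro(F_T-zG_T\le 0)-\Phi(z)}$ by the Gaussian tail $1-\Phi(M)$ together with a Chebyshev bound on $\Pro(F_T> zG_T)$; here the positivity of $G_T$, the $L^2$-control from $F_T,G_T\in\mathbb D^{1,2}$, and the elementary observation that $\mathrm{Var}(G_T)/(\mE G_T)^2\le\Psi_4(T)$ keep the tail contribution within the target order. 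The delicate point, and the part I expect to cost the most, is carrying out this truncation so that the polynomial-in-$z$ factors are absorbed into $c\max_i\Psi_i(T)$ without an extraneous logarithmic loss; this requires the refined bounds on $f_z$ (whose derivative is large only near $x=z$, i.e.\ in the tail of $\tilde U_z$) rather than the crude $\norm{f_z'}_\infty\le1$. The absolute-continuity hypothesis on the law of $F_T+zG_T$ is what legitimises the Stein representation throughout, and $\Psi_i(T)\to0$ guarantees the argument is effective once $T$ is large.
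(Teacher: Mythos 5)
Your first half is sound, and it is exactly the route of Kim and Park --- note that the paper does not prove this statement at all (it imports it verbatim from \cite{kim2017optimal}), so the only meaningful comparison is with the original proof there. The reduction $\{F_T/G_T\le z\}=\{\tilde U_z\le z\}$ with $\tilde U_z=(F_T-z(G_T-\mE G_T))/\mE G_T$, the Stein--Malliavin identity obtained from $L=-\delta D$ and the duality formula, the four-way expansion of $1-\iproct{D\tilde U_z}{-DL^{-1}\tilde U_z}_{\mathfrak H}$, and the resulting pointwise estimate
\begin{equation*}
    \abs{\Pro(\tilde U_z\le z)-\Phi(z)}\le \Psi_1(T)+\abs{z}\bigl(\Psi_2(T)+\Psi_3(T)\bigr)+z^2\Psi_4(T)
\end{equation*}
are all correct (so is your observation $\mathrm{Var}(G_T)/(\mE G_T)^2\le\Psi_4(T)$, by integration by parts and Jensen), and the absolute-continuity hypothesis is indeed what makes the representation legitimate despite the jump of $f_z'$ at $z$.

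The genuine gap is in the passage to the supremum over $z$, and it is not a merely logarithmic issue. Quantify your own truncation: on $\abs{z}\le M$ your bound is of order $M^2\max_i\Psi_i$, while on $\abs{z}>M$ Chebyshev with the only moments available to you ($L^2$ control of $F_T$, and $\mathrm{Var}(G_T)\le(\mE G_T)^2\Psi_4(T)$) gives $\Pro(F_T>zG_T)\le C/z^2+4\Psi_4(T)$, hence a tail contribution of order $1/M^2+\Psi_4(T)$. To have $C/M^2\le K\max_i\Psi_i$ you need $M^2\ge C/(K\max_i\Psi_i)$, which is incompatible with $M^2\max_i\Psi_i\le K\max_i\Psi_i$ once $\max_i\Psi_i\to 0$; the optimal choice $M\sim(\max_i\Psi_i)^{-1/4}$ yields a bound of order $\sqrt{\max_i\Psi_i}$, a full square-root loss rather than the claimed $c\max_i\Psi_i$. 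The missing ingredient --- which is precisely the content of the ``optimal'' bound in \cite{kim2017optimal} --- is a mechanism that kills the polynomial factors in $z$ by Gaussian decay: one must exploit the refined Stein-solution estimates (for $z>0$ and $x\le z$, $f_z(x)\le e^{(x^2-z^2)/2}/z$, so $f_z'$ is only of order one in a narrow window to the left of $z$) \emph{together with} a control of $\Pro(\tilde U_z>z-\varepsilon)$; but that probability is essentially the tail one is trying to estimate, so it must come from a bootstrap on $\sup_z\abs{\Pro(\tilde U_z\le z)-\Phi(z)}$ itself (or from a comparison with a variance-matched Gaussian), not from Chebyshev. You correctly flag this step as the expensive one, but you give no argument for it; as written, the proposal proves a strictly weaker statement (rate $\sqrt{\max_i\Psi_i}$) than the theorem asserts.
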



\section{Berry-Ess\'{e}en upper bounds of moment estimators}\label{sec_moment}

In this section, we will prove the \bE upper bounds of Vasicek model moment estimators $\hat{\mu}$ and $\hat{k}$.
For the convenience of the following discussion, we first define $A(z)$:
\begin{align}
    A(z):=\Pro\biggl(\frac{k}{T^{\beta-1}}(\hat{\mu}-\mu)\leq z\biggr)-\Pro(Z\leq z),
    \label{formal_az}
\end{align}
where $Z\sim \mathcal{N}(0,1)$ is a standard Normal variable. Next, we introduce the CLT of $\hat{\mu}$.
\begin{theorem}[\cite{peixingzhi} Proposition 4.19]
    Assume $\beta\in (1/2,1)$, and $G_t$ is a self-similar Gaussian process satisfying \Cref{hyp_1} and $\mE[G_1^2] =1$. Then $T^{1-\beta}(\hat{\mu}-\mu)$ is asymptotically normal as $T\to\infty$:
    \begin{align}
        T^{1-\beta}(\hat{\mu}-\mu)=\frac{\mu}{k}\frac{\me^{-kT}-1}{T^\beta}+\frac{1}{k}\frac{G_T-Z_T}{T^\beta}\distribution \mathcal{N}(0,1/k^2),
    \end{align}
    where
    \begin{equation*}
        Z_T = I_1\bigl(\me^{-k(T-s)}\mathbbm{1}_{[0,T]}(s)\bigr)
    \end{equation*}
    is stochastic integral with respect to $G_t$.
\end{theorem}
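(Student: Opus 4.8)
The plan is to first establish the displayed equality as an exact algebraic identity and then read the limit law off it. First I would write $\hat\mu-\mu=\frac1T\int_0^T(V_t-\mu)\dif t$ and substitute the explicit solution, so that $V_t-\mu=-\mu\me^{-kt}+\int_0^t\me^{-k(t-s)}\dif G_s$. The deterministic part integrates elementarily, $\frac1T\int_0^T(-\mu\me^{-kt})\dif t=\frac{\mu}{kT}(\me^{-kT}-1)$, and multiplying by $T^{1-\beta}$ produces the first summand $\frac{\mu}{k}\frac{\me^{-kT}-1}{T^\beta}$. For the stochastic part the integrand is deterministic, so $\int_0^t\me^{-k(t-s)}\dif G_s=I_1\bigl(\me^{-k(t-s)}\mathbbm{1}_{[0,t]}\bigr)$ is a Wiener integral and the (stochastic) Fubini theorem applies; swapping the order of integration gives
\begin{equation*}
\int_0^T\!\!\int_0^t\me^{-k(t-s)}\dif G_s\,\dif t=\int_0^T\Bigl(\int_s^T\me^{-k(t-s)}\dif t\Bigr)\dif G_s=\frac1k\int_0^T\bigl(1-\me^{-k(T-s)}\bigr)\dif G_s.
\end{equation*}
Since $G_0=0$, the first piece is $\frac1kG_T$ and the second is $\frac1kZ_T$, which yields the stated identity with $\frac1k\frac{G_T-Z_T}{T^\beta}$.

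Next I would pass to the limit. The deterministic summand tends to $0$. The remaining term is $\frac1kT^{-\beta}(G_T-Z_T)$, where $G_T-Z_T=I_1(f_T)$ with $f_T(s)=\bigl(1-\me^{-k(T-s)}\bigr)\mathbbm{1}_{[0,T]}(s)$ lies in the first Wiener chaos and is therefore exactly a centered Gaussian variable for every $T$. Consequently convergence in law to $\mathcal N(0,1/k^2)$ is equivalent to the single scalar statement $T^{-2\beta}\normh{f_T}^2\to1$; no fourth-moment or higher-chaos argument is needed.

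The computational heart is thus this variance asymptotics, and I would organize it through the decomposition $f_T=\mathbbm{1}_{[0,T]}-g_T$ with $g_T(s)=\me^{-k(T-s)}\mathbbm{1}_{[0,T]}(s)$, so that
\begin{equation*}
\mathrm{Var}(G_T-Z_T)=\normh{\mathbbm{1}_{[0,T]}}^2-2\langle\mathbbm{1}_{[0,T]},g_T\rangle_{\mathfrak H}+\normh{g_T}^2.
\end{equation*}
The first term equals $\mE[G_T^2]=T^{2\beta}$ exactly, by self-similarity together with $\mE[G_1^2]=1$; this is what lets me avoid analysing the $\Psi$-contribution to the variance of $G_T$ itself. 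For the two corrections I would use \Cref{hyp_1} and \Cref{prop_norm_ieq} to split each $\mathfrak H$-inner product into its $C_\beta$-singular part plus a remainder controlled by the $\mathfrak H_2$-norm. After the substitution $u=T-s$, $v=T-t$ the singular part of $\normh{g_T}^2$ is $C_\beta\int_{[0,T]^2}\me^{-ku}\me^{-kv}\abs{u-v}^{2\beta-2}\dif u\,\dif v$, which converges to a finite constant as $T\to\infty$ because $2\beta-2>-1$ makes the kernel integrable, while the $\mathfrak H_2$-remainder is $O(T^{2\beta-2})$; hence $\normh{g_T}^2=O(1)$. A parallel estimate gives $\langle\mathbbm{1}_{[0,T]},g_T\rangle_{\mathfrak H}=O(T^{2\beta-1})$, the exponential weight in one variable costing a full power of $T$. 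Therefore $T^{-2\beta}\mathrm{Var}(G_T-Z_T)=1+O(T^{-1})\to1$.

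The main obstacle is the bookkeeping in this last step: one must confirm that \emph{every} term carrying a factor $g_T$ --- its singular variance, its $\mathfrak H_2$-remainder, and its cross inner product with $\mathbbm{1}_{[0,T]}$ in both norms --- is of order strictly smaller than $T^{2\beta}$, so that only the self-similar term survives after normalization. The self-similarity hypothesis does the essential work here: it fixes $\normh{\mathbbm{1}_{[0,T]}}^2=T^{2\beta}$ with no reference to $C_\beta$, whereas estimating this quantity from \Cref{hyp_1} alone would produce the leading coefficient $C_\beta/(\beta(2\beta-1))$ together with a genuinely $T^{2\beta}$-sized $\mathfrak H_2$-term, neither of which could be discarded. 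This is precisely why the assumptions of self-similarity and $\mE[G_1^2]=1$ cannot be relaxed to \Cref{hyp_1} alone for this central limit theorem.
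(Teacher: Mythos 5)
Your proposal is correct, but there is nothing in the paper to compare it against: the paper does not prove this statement at all --- it is quoted verbatim from \cite{peixingzhi} (Proposition 4.19) and used as a black box in the proof of formula \eqref{mumoment}. Judged on its own, your argument holds up. The exact identity follows from the Fubini interchange you describe, which is the same interchange the paper uses silently when it writes $\hat{\mu}=\bigl(\mu d_T+I_1(l_T)\bigr)/T$ in \Cref{lemma_formal}, since $l_T(s)=\frac{1}{k}(1-\me^{-k(T-s)})\mathbbm{1}_{[0,T]}(s)$. Because $G_T-Z_T=I_1\bigl(\mathbbm{1}_{[0,T]}-k_T\bigr)$ (your $g_T$ is the paper's $k_T$) lies in the first Wiener chaos, convergence in law is indeed equivalent to convergence of the normalized variance, so no fourth-moment machinery is needed. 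Your order estimates also check out: $\normh{k_T}^2=O(1)$ is the time-reversed analogue of \Cref{Lemma_MT}, and $\langle\mathbbm{1}_{[0,T]},k_T\rangle_{\mathfrak{H}}=O(T^{2\beta-1})$ follows from \Cref{hyp_1} via $\int_0^T\abs{t-s}^{2\beta-2}\dif t=O(T^{2\beta-1})$ and $\int_0^T\me^{-k(T-s)}s^{\beta-1}\dif s=O(T^{\beta-1})$, giving $T^{-2\beta}\normh{\mathbbm{1}_{[0,T]}-k_T}^2=1+O(T^{-1})$. Your closing remark is the correct diagnosis of the hypotheses: self-similarity together with $\mE[G_1^2]=1$ pins down $\normh{\mathbbm{1}_{[0,T]}}^2=T^{2\beta}$ exactly, whereas \Cref{hyp_1} alone would produce a different leading constant and an $\mathfrak{H}_2$ remainder of the same order $T^{2\beta}$ that cannot be discarded --- precisely why this CLT needs stronger assumptions than the one for $\hat{k}$. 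The only point worth recording explicitly is the justification of the Fubini step (Bochner integrability in $\mathfrak{H}$ of $t\mapsto\me^{-k(t-\cdot)}\mathbbm{1}_{[0,t]}(\cdot)$, which follows from the uniform bound underlying \Cref{Lemma_MT}); with that noted, your proof is a complete, self-contained substitute for the citation.
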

Following from the above Theorem, we can obtain the expanded form of \eqref{formal_az}:
\begin{align*}
    A(z)&=\Pro\Bigl(kT^{1-\beta}(\hat{\mu}-\mu)\leq z\Bigr)-\Pro(Z\leq z)\\
    &=\Pro\biggl(\frac{G_T-Z_T+\mu(\me^{-kT}-1)}{T^\beta}\leq z\biggr)-\Pro(Z\leq z).
\end{align*}
Then, we can prove the convergence speed of $\hat{\mu}$.
\begin{proof}[Proof of formula \eqref{mumoment}]
    Let $a=T^{-\beta/2}$, According to \Cref{chang_lemma}, we have
    \begin{align*}
        \sup_{z\in\mathbb{R}}\abs{A(z)}&\leq \sup_{z\in\mathbb{R}}\abs{\Pro\Bigl(\frac{G_T}{T^\beta}\leq z\Bigr)-\Pro(Z\leq z)}\\
        &\ \ \ +\Pro\biggl(\abs{\frac{-Z_T+\mu(\me^{-kT}-1)}{T^\beta}}>T^{-\frac{\beta}{2}}\biggr)+\frac{T^{-\frac{\beta}{2}}}{\sqrt{2\pi}}.
    \end{align*}
    Since $G_T$ is self-similar, $(G_T/{T^\beta})$ is standard Normal variable,
    \begin{align*}
        \sup_{z\in\mathbb{R}}\abs{\Pro\Bigl(\frac{G_T}{T^\beta}\leq z\Bigr)-\Pro(Z\leq z)}=0.
    \end{align*}
    Following from Chebyshev inequality, we can obtain
    \begin{align*}
        \Pro\biggl(\abs{\frac{-Z_T+\mu(\me^{-kT}-1)}{T^\beta}}>T^{-\frac{\beta}{2}}\biggr)=\Pro\bigl(\abs{Z_T-\mu(\me^{-kT}-1)}>T^{\frac{\beta}{2}}\bigr)\leq \frac{C_1}{T^{\beta}},
    \end{align*}
    where 
    \begin{align*}
        C_1 = \mE\bigl(\abs{Z_T-\mu(\me^{-kT}-1)}^2\bigr).
    \end{align*}
    The Proposition 3.10 of \cite{peixingzhi} ensures that $C_1$ is bounded. Combining the above results, we have
    \begin{align}
        \sup_{z\in\mathbb{R}}\abs{A(z)}\leq \frac{1}{\sqrt{2\pi}T^{\beta/2}}+\frac{C_1}{T^{\beta}}.
    \end{align}
    When $T$ is sufficiently large, there exists the constant $C_{\beta,V}$ such that the formula \eqref{mumoment} holds.
\end{proof}
Similarly, we review the central limit theorem of $\hat{k}$.
\begin{theorem}[\cite{peixingzhi} Proposition 4.18]
    Assume $\beta\in(1/2,3/4)$ and $G_t$ is a Gaussian process satisfying \Cref{hyp_1}. Then $\sqrt{T}(\hat{k}-k)$ is asymptotically normal as $T\to\infty$:
    \begin{align*}
        \sqrt{T}(\hat{k}-k) = \sqrt{T}\left(\left[ \frac{\frac{1}{T}\int_{0}^{T}V_{t}^{2}dt-(\frac{1}{T}\int_{0}^{T}V_{t}dt)^{2}}{C_{\beta }\Gamma (2\beta -1)}\right] ^{-\frac{1}{2\beta }}-k\right)\distribution \mathcal{N}(0,k\sigma^2_\beta/4\beta^2).
    \end{align*}
\end{theorem}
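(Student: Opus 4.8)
The plan is to regard $\hat{k}=\bigl[Q_T/c\bigr]^{-1/(2\beta)}$, with $c=C_\beta\Gamma(2\beta-1)$ and
\begin{align*}
    Q_T=\frac{1}{T}\int_0^T V_t^2\dif t-\Bigl(\frac{1}{T}\int_0^T V_t\dif t\Bigr)^2,
\end{align*}
as a smooth function of the single quadratic functional $Q_T$, and to reduce the claim to a central limit theorem for $Q_T$. First I would split $V_t=\mu(1-\me^{-kt})+X_t$, where $X_t=\int_0^t\me^{-k(t-s)}\dif G_s=I_1(f_t)$ with $f_t:=\me^{-k(t-\cdot)}\mathbbm{1}_{[0,t]}$ carries the second chaos. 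Substituting into $Q_T$ and using the ergodic limit $\frac{1}{T}\int_0^T\mE[X_t^2]\dif t\to ck^{-2\beta}$ together with $\frac{1}{T}\int_0^T(1-\me^{-kt})^2\dif t\to1$, I would verify $Q_T\to Q_0:=ck^{-2\beta}$, so that $\hat{k}\to k$ and the linearization can be anchored at $Q_0$.

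Next I would apply the delta method. With $\phi(x)=(x/c)^{-1/(2\beta)}$, so that $\hat{k}=\phi(Q_T)$ and $k=\phi(Q_0)$, a Taylor expansion yields $\sqrt{T}(\hat{k}-k)=\phi'(Q_0)\,\sqrt{T}(Q_T-Q_0)+R_T$ with $\phi'(Q_0)=-\frac{k^{2\beta+1}}{2\beta c}$, and the remainder $R_T$ is controlled once consistency and an $L^2$ bound on $\sqrt{T}(Q_T-Q_0)$ are available. The crux is then the CLT for $\sqrt{T}(Q_T-Q_0)$. Expanding $Q_T$ and sorting by Wiener chaos, the linear-in-$X$ pieces---namely $\frac{2\mu}{T}\int_0^T(1-\me^{-kt})X_t\dif t$ from $\frac{1}{T}\int_0^T V_t^2\dif t$ and the cross term of $\bigl(\frac{1}{T}\int_0^T V_t\dif t\bigr)^2$---are both asymptotically $\pm\frac{2\mu}{T}\int_0^T X_t\dif t$ and cancel to leading order, while $\bigl(\frac{1}{T}\int_0^T X_t\dif t\bigr)^2=O_P(T^{2\beta-2})$ is killed by the factor $\sqrt{T}$ exactly when $\beta<3/4$. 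What survives is the pure second-chaos term, which by the product formula \eqref{mutil_n} equals $I_2(g_T)$ with
\begin{align*}
    g_T=\sqrt{T}\cdot\frac{1}{T}\int_0^T f_t^{\otimes2}\dif t\in\mathfrak{H}^{\odot2},
\end{align*}
the deterministic recentering $\frac{1}{T}\int_0^T\mE[X_t^2]\dif t-Q_0$ being of lower order after multiplication by $\sqrt{T}$.

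To obtain asymptotic normality of $I_2(g_T)$ I would invoke the Fourth Moment Theorem: it suffices to prove $\mE[I_2(g_T)^2]=2\norm{g_T}_{\mathfrak{H}^{\otimes2}}^2\to\sigma^2$ for a finite $\sigma^2$, and that the contraction norm $\norm{g_T\otimes_1 g_T}_{\mathfrak{H}^{\otimes2}}\to0$. I would compute both with \Cref{hyp_1}, using \Cref{prop_norm_ieq} to replace the $\mathfrak{H}$-inner products by the $\mathfrak{H}_1$-kernel $C_\beta\abs{t-s}^{2\beta-2}$ and absorbing the $\Psi$-correction into the $\mathfrak{H}_2$-norm and the $K$-operator terms; this turns the variance and the contraction into explicit multiple time integrals of products of $\me^{-k\abs{\cdot}}$ and $\abs{\cdot}^{2\beta-2}$. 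Finally I would match constants, checking that $(\phi'(Q_0))^2$ times the limiting variance of $\sqrt{T}(Q_T-Q_0)$ equals $k\sigma_\beta^2/(4\beta^2)$ with $\sigma_\beta^2$ as in \eqref{sigma_beta}.

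The step I expect to be hardest is this final asymptotic analysis: showing that $2\norm{g_T}_{\mathfrak{H}^{\otimes2}}^2$ converges and pinning its limit to the exact constant behind $\sigma_\beta^2$, while simultaneously proving the contraction norm vanishes. This is precisely where $\beta<3/4$ is forced: for $\beta\ge3/4$ the kernel $\abs{t-s}^{2\beta-2}$ is too heavy for the contraction to decay at the $\sqrt{T}$ scale and the $\sqrt{T}$-CLT fails (with a logarithmic correction at $\beta=3/4$). Controlling the $\Psi$-contributions through the $K$-operator estimates of \Cref{prop_norm_ieq}, so that they do not perturb the leading constant, is the delicate bookkeeping.
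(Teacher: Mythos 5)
Your proposal is sound: the decomposition $V_t=\mu(1-\me^{-kt})+X_t$, the cancellation of the first-chaos terms up to $O_{L^2}(T^{-1})$, the observation that $(\frac{1}{T}\int_0^T X_t\dif t)^2=O_P(T^{2\beta-2})$ is killed by the $\sqrt{T}$ scaling exactly when $\beta<3/4$, the reduction to the second-chaos functional $I_2(g_T)$, and the constant bookkeeping ($\phi'(Q_0)=-k^{2\beta+1}/(2\beta c)$ against a limiting variance $c^2\sigma_\beta^2k^{-4\beta-1}$ for $\sqrt{T}(Q_T-Q_0)$, giving $k\sigma_\beta^2/(4\beta^2)$) are all correct. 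Note, however, that this paper does not prove the statement at all --- it is imported from \cite{peixingzhi} (Proposition 4.18) --- so the fair comparison is with the paper's own quantitative analogue, the proof of \eqref{hat_k} via \Cref{lemma_dv}. There the same decomposition appears (their $I_5$, $K_T$, $I_4$), but two things are done differently. First, instead of Taylor-expanding $\phi(x)=(x/c)^{-1/(2\beta)}$, the paper inverts the monotone map exactly, turning the event $\{\sqrt{4\beta^2T/(k\sigma_\beta^2)}(\hat{k}-k)\le z\}$ into an event for $Q_T$ at a $z$-dependent threshold $\nu$, and then bounds $\abs{\overline{\Phi}(\nu)-\Pro(Z\le z)}$ by $C/\sqrt{T}$ using Lemma 5.4 of \cite{chen_parameter_2021}; this is what a uniform-in-$z$ Berry--Ess\'{e}en statement requires, whereas your delta method plus Slutsky yields only the pointwise CLT (which is all the quoted statement asks for, so this is a legitimate economy, not an error). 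Second, the analytic core you flag as hardest --- convergence of $2\norm{g_T}_{\mathfrak{H}^{\otimes 2}}^2$ to the constant behind \eqref{sigma_beta} and decay of $\norm{g_T\otimes_1 g_T}_{\mathfrak{H}^{\otimes 2}}$ under \Cref{hyp_1}, including the $\Psi$-corrections handled through \Cref{prop_norm_ieq} --- is not redone in either paper: both cite Theorem 1.4 of \cite{chen_parameter_2021}, which packages precisely this fourth-moment computation, so your plan would in effect re-prove that theorem where a citation suffices. Two small bookkeeping items you should not omit in a full write-up: the purely deterministic drift discrepancy $\frac{1}{T}\int_0^T\mu^2(1-\me^{-kt})^2\dif t-\mu^2d_T^2/T^2$ (the paper's $K_T$) is $O(T^{-1})$ and must be checked, and the recentering $\sqrt{T}\bigl(\frac{1}{T}\int_0^T\mE[X_t^2]\dif t-Q_0\bigr)=O(T^{2\beta-3/2})$ itself requires $\beta<3/4$, not only the contraction decay and the squared-mean term to which you attribute that constraint.
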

The following Lemma shows the upper bound of the expectation of $(\int_0^T \me^{-ks}\dif G_s)^2$.
\begin{lemma}\label{Lemma_MT}
    Let $M_T$ be the process defined by
    \begin{equation*}
        M_T=I_1\bigl(\me^{-ku}\mathbbm{1}_{[0,T]}(u)\bigr)=\int_0^T \me^{-ks}\dif G_s. 
    \end{equation*}
    When $\beta\in(1/2,1)$, there exists constant $C$ independent of $T$ such that
    \begin{equation}
        \mE(M^2_T)\leq C.
        \label{formula_MT}
    \end{equation}
\end{lemma}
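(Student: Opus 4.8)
The plan is to identify $\mE(M_T^2)$ with a Hilbert-space norm and then control that norm using the comparison inequality of \Cref{prop_norm_ieq}. Since $M_T = I_1(\phi)$ with $\phi(u) = \me^{-ku}\mathbbm{1}_{[0,T]}(u)$, the isometry relation \eqref{isometry} gives $\mE(M_T^2) = \normh{\phi}^2$. By the inner product representation \eqref{inner product 000} this equals $\int_{[0,T]^2} \me^{-kt}\me^{-ks}\frac{\partial^2 R(t,s)}{\partial t \partial s}\dif t\,\dif s$, so the whole problem reduces to bounding a double integral of the mixed derivative of the covariance, uniformly in $T$.

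First I would apply the first inequality of \Cref{prop_norm_ieq}, namely $\normh{\phi}^2 \leq \norm{\phi}_{\mathfrak{H}_1}^2 + \norm{\phi}_{\mathfrak{H}_2}^2$, which separates the two pieces $C_\beta|t-s|^{2\beta-2}$ and $\Psi(t,s)$ appearing in \Cref{hyp_1}. The $\mathfrak{H}_2$-term factorizes: since $\phi \geq 0$, one has $\norm{\phi}_{\mathfrak{H}_2}^2 = C'_\beta\left(\int_0^T \me^{-kt}t^{\beta-1}\dif t\right)^2$, and enlarging the domain to $[0,\infty)$ yields $\int_0^\infty \me^{-kt}t^{\beta-1}\dif t = \Gamma(\beta)/k^\beta < \infty$, finite because $\beta > 0$ makes the singularity at the origin integrable. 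This gives a bound on $\norm{\phi}_{\mathfrak{H}_2}^2$ that does not depend on $T$.

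For the $\mathfrak{H}_1$-term I would again enlarge the domain to $[0,\infty)^2$ and show $\int_0^\infty\int_0^\infty \me^{-k(t+s)}|t-s|^{2\beta-2}\dif t\,\dif s < \infty$. After the rescaling $x = kt$, $y = ks$ this integral becomes $k^{-2\beta}\int_0^\infty\int_0^\infty \me^{-(x+y)}|x-y|^{2\beta-2}\dif x\,\dif y$, a finite constant depending only on $\beta$ and $k$. Combining the two bounds gives $\mE(M_T^2) \leq C$ with $C$ independent of $T$.

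The main obstacle is the convergence of the $\mathfrak{H}_1$-integral across the diagonal $t = s$, where the kernel $|t-s|^{2\beta-2}$ is singular. This is exactly where the hypothesis $\beta \in (1/2,1)$ is essential: it forces $2\beta - 2 > -1$, so the singularity is locally integrable; meanwhile the exponential factor $\me^{-k(t+s)}$ guarantees integrability at infinity and, crucially, lets us pass to the $T$-independent domain $[0,\infty)^2$, which is what makes the final constant free of $T$.
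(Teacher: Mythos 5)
Your proof is correct and follows essentially the same route as the paper's: the isometry $\mE(M_T^2)=\normh{m_T}^2$, the comparison $\normh{m_T}^2\leq\norm{m_T}_{\mathfrak{H}_1}^2+\norm{m_T}_{\mathfrak{H}_2}^2$ from \eqref{2.9}, and then bounding each norm by a $T$-independent constant using the exponential factor together with the integrability of $\abs{t-s}^{2\beta-2}$ and $(ts)^{\beta-1}$ when $\beta>1/2$. The only cosmetic difference is that you enlarge the integration domain to $[0,\infty)$ and rescale to get explicit Gamma-function constants, whereas the paper performs a change of variables on the finite domain; the computations are the same.
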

\begin{proof}
    According to \eqref{isometry} and \eqref{2.9}, we can obtain
    \begin{align*}
        \mE[\abs{M_T}^2]=\normh{m_T}^2 \leq \norm{m_T}^2_{\mathfrak{H}_1}+\norm{m_T}^2_{\mathfrak{H}_2},
    \end{align*}
    where $m_T(u)=\me^{-ku}\mathbbm{1}_{[0,T]}(u)$.

    It is easy to see that 
    \begin{align}
        \norm{m_T}^2_{\mathfrak{H}_1} &=2C_\beta\int_{0\leq u\leq v\leq T} \me^{-k(u+v)}\abs{u-v}^{2\beta-2}\dif u\dif v\notag\\
        &\leq C\int_{0}^T(\int_0^T \me^{-k(2x+y)}x^{2\beta-2}\dif x)\dif y\leq C^\prime,
    \end{align}
    where $C^\prime$ is a constant. Also, we have 
    \begin{align}
        \norm{m_T}^2_{\mathfrak{H}_2}&=C^\prime_\beta\int_0^T\me^{-ku}u^{\beta-1}\dif u\int_0^T\me^{-kv}v^{\beta-1}\dif v\leq C^{\prime\prime}.
    \end{align}
    Combining the above two formulas, we obtain \eqref{formula_MT}.
\end{proof}

Denote $B(z)$ as
\begin{align*}
    B(z):&=\Pro\biggl(\sqrt{\frac{4\beta^2 T}{k\sigma^2_\beta}}(\hat{k}-k)\leq z\biggr)-\Pro(Z\leq z).
\end{align*}
Then we can obtain the Berry-Ess\'{e}en upper bound of ME $\hat{k}$.
\begin{proof}[Proof of formula \eqref{hat_k}]
    According to \cite{peixingzhi} Proposition 4.18, we have
        \begin{align*}
            B(z)&=\Pro\biggl(\sqrt{\frac{4\beta^2 T}{k\sigma^2_\beta}}(\hat{k}-k)\leq z\biggr)-\Pro(Z\leq z)\\
            &=\Pro\biggl(\hat{k}-k\leq \sqrt{\frac{k\sigma^2_\beta}{4\beta^2 T}}z\biggr)-\Pro(Z\leq z)\\
            &=\Pro\biggl(\frac{\frac{1}{T}\int_0^T V^2_t\dif t - \bigl(\frac{1}{T}\int_0^T V_t\dif t\bigr)^2}{C_\beta \Gamma(2\beta -1)}\geq \biggl(\sqrt{\frac{k\sigma^2_\beta}{4\beta^2 T}}z+k\biggr)^{-2\beta}\biggr)-\Pro(Z\leq z)\\
            &=\Pro\biggl(\frac{1}{T}\bigl(\int_0^T V^2_t\dif t -(\frac{1}{T}\int_0^T V_t\dif t)^2\bigr)-\alpha\\
            &\; \; \;\;\;\;\;\;\; \geq C_\beta \Gamma(2\beta -1)\biggl[\biggl(\sqrt{\frac{k\sigma^2_\beta}{4\beta^2 T}}z+k\biggr)^{-2\beta}-k^{-2\beta}\biggr]\biggr)-\Pro(Z\leq z)\\
            &=\Pro\biggl(\frac{1}{T}\bigl(\int_0^T V^2_t\dif t -(\frac{1}{T}\int_0^T V_t\dif t)^2\bigr)-\alpha\geq \alpha\biggl[\biggl(1+\frac{z\sigma_\beta}{2\beta\sqrt{k T}}\biggr)^{-2\beta}-1\biggr]\biggr)\\
            &\ \ \ -\Pro(Z\leq z).
        \end{align*}
        We denote $\overline{\Phi}(z)$ as the tail probability $1-\Pro(Z\leq z)$ and 
        \begin{equation*}
            \nu =\sqrt{\frac{kT}{\sigma_\beta^2}}\biggl[\biggl(1+\frac{z\sigma_\beta}{2\beta\sqrt{k T}}\biggr)^{-2\beta}-1\biggr].
        \end{equation*}
        Then we can obtain
        \begin{align}
            \abs{B(z)}&=\abs{\Pro\biggl(\sqrt{\frac{kT}{\alpha^2\sigma_\beta^2}}\biggl[\frac{1}{T}\bigl(\int_0^T V^2_t\dif t -(\frac{1}{T}\int_0^T V_t\dif t)^2\bigr)-\alpha\biggr]\geq\nu\biggr)-\Pro(Z\leq z)}\notag\\
            &\leq \abs{\Pro\biggl(\sqrt{\frac{kT}{\alpha^2\sigma_\beta^2}}\biggl[\frac{1}{T}\bigl(\int_0^T V^2_t\dif t -(\frac{1}{T}\int_0^T V_t\dif t)^2\bigr)-\alpha\biggr]\geq\nu\biggr)-\overline{\Phi}(\nu)}\notag\\
            &\ \ \ +\abs{\overline{\Phi}(\nu)-\Pro(Z\leq z)} \notag\\
            &= \abs{D(\nu)}+\abs{\overline{\Phi}(\nu)-\Pro(Z\leq z)}.
            \label{kmoment_step1}
    \end{align}
    Denote $D(v)$ as 
    \begin{align*}
        D(\nu):&=\abs{\Pro\biggl(\sqrt{\frac{kT}{\alpha^2\sigma_\beta^2}}\biggl[\frac{1}{T}\bigl(\int_0^T V^2_t\dif t -(\frac{1}{T}\int_0^T V_t\dif t)^2\bigr)-\alpha\biggr]\geq\nu\biggr)-\overline{\Phi}(\nu)},
    \end{align*}
    where $\nu\in\Rnum$, $\alpha:=C_\beta \Gamma(2\beta-1)k^{-2\beta}$. The Lemma 5.4 of \cite{chen_parameter_2021}  ensures that 
    \begin{equation*}
        \abs{\overline{\Phi}(\nu)-\Pro(Z\leq z)}\leq \frac{C}{\sqrt{T}}.
    \end{equation*}
    Combining with \Cref{lemma_dv}, we obtain the desired result.
\end{proof}
The following Lemma provides the upper bound of $D(\nu)$.
\begin{lemma}\label{lemma_dv}
    When $T$ is large enough, there exists constant $C_{\beta,V}^\prime$ such that
    \begin{align*}
        \sup_{\nu\in\Rnum}\abs{D(\nu)}\leq \frac{C_{\beta,V}^\prime}{T^m},
    \end{align*}
    where $m= \min\ \{1/3, (3-4\beta)/2\}$.
\end{lemma}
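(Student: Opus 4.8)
The plan is to recognize $\sup_{\nu\in\Rnum}\abs{D(\nu)}$ as the Kolmogorov distance between the normalized empirical variance
\[
F_T := \sqrt{\frac{kT}{\alpha^2\sigma_\beta^2}}\biggl[\frac1T\Bigl(\int_0^T V_t^2\dif t - \bigl(\tfrac1T\int_0^T V_t\dif t\bigr)^2\Bigr)-\alpha\biggr]
\]
and a standard Normal $Z$: since $\Pro(F_T\geq\nu)-\overline{\Phi}(\nu)=\Phi(\nu)-\Pro(F_T<\nu)$, the quantity $\sup_{\nu}\abs{D(\nu)}$ equals $\sup_{\nu}\abs{\Pro(F_T\leq\nu)-\Phi(\nu)}$ once one records that $F_T$ has an absolutely continuous law. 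First I would split the drift from the noise by writing $V_t=m_t+X_t$ with $m_t:=\mu(1-\me^{-kt})$ and $X_t:=\int_0^t\me^{-k(t-s)}\dif G_s$, and expand the centered functional $\frac1T\int_0^T(V_t-\bar V)^2\dif t$ into a deterministic mean term, a first-chaos cross term $\frac2T\int_0^T(m_t-\bar m)(X_t-\bar X)\dif t$, and the dominant quadratic term $\frac1T\int_0^T(X_t-\bar X)^2\dif t$. By the product formula \Cref{lemma_mutli_equa}, the quadratic term, after subtracting its mean and the centering constant $\alpha$, produces a genuine second-chaos variable $F_T^{(2)}=I_2(f_T)$, while the mean term, the first-chaos term, and the gap between $\alpha$ and the true mean are collected into a remainder $R_T$ supported on the zeroth and first chaos, giving $F_T=F_T^{(2)}+R_T$.

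Next I would apply \Cref{chang_lemma} with $\xi=F_T^{(2)}$ and $\eta=R_T$ to obtain
\[
\sup_{\nu\in\Rnum}\abs{D(\nu)}\leq \sup_{z\in\Rnum}\abs{\Pro(F_T^{(2)}\leq z)-\Phi(z)}+\Pro(\abs{R_T}>a)+\frac{a}{\sqrt{2\pi}}.
\]
Using that $m_t-\bar m=-\mu\me^{-kt}+O(1/T)$ is driven by the integrable factor $\me^{-kt}$, and controlling the exponentially weighted first-chaos integrals by \Cref{Lemma_MT}, I expect $\mE[R_T^2]\leq C/T$. Chebyshev's inequality then gives $\Pro(\abs{R_T}>a)\leq C/(a^2T)$, and optimizing the free parameter by taking $a\asymp T^{-1/3}$ yields a contribution of order $T^{-1/3}$. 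This is exactly the source of the first exponent in $m=\min\{1/3,(3-4\beta)/2\}$.

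For the dominant second-chaos term I would invoke the Nourdin--Peccati fourth-moment \bE bound \cite{nourdin_normal_2012}: since $F_T^{(2)}=I_2(f_T)$ lies in a single Wiener chaos, so that $-DL^{-1}F_T^{(2)}=\tfrac12 DF_T^{(2)}$,
\[
\sup_{z\in\Rnum}\abs{\Pro(F_T^{(2)}\leq z)-\Phi(z)}\leq C\Bigl(\sqrt{\mE\bigl[(1-\tfrac12\norm{DF_T^{(2)}}_{\mathfrak{H}}^2)^2\bigr]}+\abs{1-\mE[(F_T^{(2)})^2]}\Bigr).
\]
The product formula reduces the variance term to the contraction norm $\norm{f_T\otimes_1 f_T}_{\mathfrak{H}^{\otimes 2}}$, and the remaining task is to estimate this norm together with the normalization defect $\mE[(F_T^{(2)})^2]-1$. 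Here I would replace the $\mathfrak{H}$-inner products by the singular kernel of \Cref{hyp_1}, bounding everything through the $\norm{\cdot}_{\mathfrak{H}_1}$ and $\norm{\cdot}_{\mathfrak{H}_2}$ norms and the comparison inequalities of \Cref{prop_norm_ieq}. The resulting multiple integrals of kernels of type $\me^{-k(\cdots)}\abs{u-v}^{2\beta-2}$ decay like $T^{-(3-4\beta)/2}$ precisely for $\beta\in(1/2,3/4)$, which supplies the second exponent of $m$.

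The main obstacle will be this contraction estimate: controlling $\norm{f_T\otimes_1 f_T}_{\mathfrak{H}^{\otimes 2}}$ requires splitting the quadruple integral into near-diagonal and off-diagonal regions and using the integrability of $\me^{-ks}$ to absorb the factors of $T$, so that only the slowly decaying singular part $\abs{u-v}^{2\beta-2}$ governs the rate; the delicacy of tracking these $\beta$-dependent powers is what forces the restriction $\beta<3/4$. Combining the $T^{-1/3}$ remainder bound from \Cref{chang_lemma} with the $T^{-(3-4\beta)/2}$ contraction bound yields $\sup_{\nu}\abs{D(\nu)}\leq C_{\beta,V}^\prime/T^{m}$ with $m=\min\{1/3,(3-4\beta)/2\}$ for $T$ large enough, as claimed.
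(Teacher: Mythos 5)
Your overall architecture matches the paper's: isolate the quadratic functional of the OU part $X_t$, apply \Cref{chang_lemma} with a Chebyshev bound on a remainder, and let the second-chaos main term carry the $T^{-(3-4\beta)/2}$ rate. The one structural difference is that the paper treats the main term $\sqrt{kT/(\alpha^2\sigma_\beta^2)}\,(I_5-\alpha+K_T)$ as a black box via Theorem 1.4 of \cite{chen_parameter_2021}, whereas you propose to re-derive that bound through the fourth-moment theorem and contraction estimates; that is legitimate (it is how the cited theorem is proved), and your choice to keep $\bar X^2$ inside the centered quadratic term is, if anything, tidier than the paper's bookkeeping, which leaves the square of the first-chaos average in its remainder $I_4$. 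The genuine problem is your treatment of the deterministic centering gap.

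Concretely: you place ``the gap between $\alpha$ and the true mean'' into $R_T$ and claim $\mE[R_T^2]\leq C/T$. This is false, and the step fails exactly in the regime $\beta\in(7/12,3/4)$ where the second exponent of $m$ is supposed to matter. The mean of your quadratic term is $\tilde b_T := \mE\bigl[\frac1T\int_0^T(X_t-\bar X)^2\dif t\bigr]=b_T-\mE[\bar X^2]$, where $b_T=\frac1T\int_0^T\normh{\me^{-k(t-\cdot)}\mathbbm{1}_{[0,t]}}^2\dif t$. \Cref{hyp_1} only yields $b_T-\alpha=O(T^{2\beta-2})$ (the correction $\Psi$ integrates to exactly this order), and independently $\mE[\bar X^2]\asymp T^{2\beta-2}$: this lower bound is forced by the CLT for $\hat\mu$, since $T^{1-\beta}\bar X$ is Gaussian with variance converging to $1/k^2$. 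No cancellation between the two pieces occurs (already for fractional Brownian motion, where $\Psi\equiv 0$, one has $b_T-\alpha=O(1/T)$ while $\mE[\bar X^2]\asymp T^{2\beta-2}$). Hence the deterministic component of $R_T$ is of order $\sqrt{T}\cdot T^{2\beta-2}=T^{-(3-4\beta)/2}$, so $\mE[R_T^2]\gtrsim T^{4\beta-3}\gg T^{-1}$ for every $\beta\in(\frac12,\frac34)$; and for $\beta>7/12$ this deterministic part eventually exceeds your threshold $a=T^{-1/3}$, so $\Pro(\abs{R_T}>a)\to 1$ and \Cref{chang_lemma} returns a vacuous bound. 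Note also that the term $\abs{1-\mE[(F_T^{(2)})^2]}$ in your fourth-moment inequality cannot absorb this: it measures a \emph{variance} defect of the centered chaos, not a shift of its mean. The repair is simple and restores your claimed rate: keep the gap $\delta_T:=\sqrt{kT/(\alpha^2\sigma_\beta^2)}\,(\tilde b_T-\alpha)$ attached to the main term and use
\begin{equation*}
\sup_{z\in\Rnum}\abs{\Pro\bigl(F_T^{(2)}+\delta_T\leq z\bigr)-\Phi(z)}\leq \sup_{z\in\Rnum}\abs{\Pro\bigl(F_T^{(2)}\leq z\bigr)-\Phi(z)}+\frac{\abs{\delta_T}}{\sqrt{2\pi}},
\end{equation*}
with $\abs{\delta_T}=O(T^{-(3-4\beta)/2})=O(T^{-m})$, reserving Chebyshev with $a=T^{-1/3}$ for the genuinely random first-chaos part of $R_T$, which does satisfy $\mE[\,\cdot^2\,]\leq C/T$ by the \Cref{Lemma_MT}-type estimates you invoke. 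This is, in effect, what the paper achieves by keeping $I_5-\alpha+K_T$ intact inside the term covered by \cite{chen_parameter_2021}, whose $T^{-(3-4\beta)/2}$ bound already contains the mean-gap contribution.
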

\begin{proof}
    Since the Normal distribution is symmetric, we have
    \begin{align*}
        D(\nu)&=\abs{\Pro\biggl(\sqrt{\frac{kT}{\alpha^2\sigma_\beta^2}}\biggl[\frac{1}{T}\bigl(\int_0^T V^2_t\dif t -(\frac{1}{T}\int_0^T V_t\dif t)^2\bigr)-\alpha\biggr]\geq\nu\biggr)-\overline{\Phi}(\nu)}\\
        &=\abs{\Pro\biggl(\sqrt{\frac{kT}{\alpha^2\sigma_\beta^2}}\biggl[\frac{1}{T}\bigl(\int_0^T V^2_t\dif t -(\frac{1}{T}\int_0^T V_t\dif t)^2\bigr)-\alpha\biggr]\leq\nu\biggr)-\Phi(\nu)}.
    \end{align*}
    Consider the following processes:
    \begin{align}
        X_t &= \int_0^t \me^{-k(t-s)}\dif G_s,\;\;\;I_5 = \frac{1}{T}\biggl(\int_0^T X_t^2\dif t\biggr),
        \label{formal_OU}\\
        F_T &= \int_0^T \int_0^t \me^{-k(t-s)}\dif G_s \dif t,\notag
    \end{align}
    where $X_t$ is an OU process driven by $G_t$. According to \cite{peixingzhi} formula (63), we can obtain
    \begin{align*}
        \frac{1}{T}\int_0^T V_t^2\dif t -\biggl(\frac{1}{T} \int_0^T V_t\dif t\biggr)^2 -\alpha = \frac{1}{T}\biggl(\int_0^T X_t^2\dif t\biggr) -\alpha +K_T+I_4,
    \end{align*}
    where 
    \begin{align*}
        K_T&=\frac{\mu^2}{2k}\cdot \frac{1-\me^{-2kT}}{T}-\frac{\mu^2(\me^{-kT}-1)^2}{k^2 T^2},\\
        I_4&=\frac{\mu}{k}\biggl[\frac{Z_T}{T}\me^{-kT}-\frac{M_T}{T}+2(1-\me^{-kT})\frac{F_T}{T^2}\biggr]-\frac{F_T^2}{T^2}.
    \end{align*}
    Let $a=T^{-1/3}$. \Cref{chang_lemma} ensures that
    \begin{equation}
        \begin{split}
            \sup_{\nu\in\Rnum}\abs{D(\nu)}&\leq \sup_{\nu\in\Rnum}\abs{{\Pro\biggl(\sqrt{\frac{kT}{\alpha^2\sigma_\beta^2}}\bigl(I_5-\alpha+K_T\bigr)\leq \nu\biggr)}-\Pro(Z\leq \nu)}\\
            &\ \ \ +\Pro\biggl(\abs{\sqrt{\frac{kT}{\alpha^2\sigma_\beta^2}}I_4}>T^{-1/3}\biggr)+\frac{T^{-1/3}}{\sqrt{2\pi}}.
        \end{split}
        \label{3.3}
    \end{equation}
    According to \cite{chen_parameter_2021} Theorem 1.4, we have 
    \begin{equation}
        \sup_{\nu\in\Rnum}\abs{{\Pro\biggl(\sqrt{\frac{kT}{\alpha^2\sigma_\beta^2}}\bigl(I_5-\alpha+K_T\bigr)\leq \nu\biggr)}-\Pro(Z\leq \nu)} \leq \frac{C_{k,\beta}}{T^{\frac{3-4\beta}{2}}},
        \label{firstterm}
    \end{equation}
    where $C_{k,\beta}$ independent of $T$ is a constant. Denote $K_1=4\sqrt{\frac{k}{\alpha^2\sigma_\beta^2}}$. We then consider the second term of right side of \eqref{3.3}.
    \begin{align*}
        \Pro\biggl(\abs{\sqrt{\frac{kT}{\alpha^2\sigma_\beta^2}}I_4}>T^{-1/3}\biggr)&\leq \Pro\biggl(\abs{K_1\frac{\mu\cdot\me^{-kT}\cdot Z_T}{k}}>T^{1/6}\biggr)\\
            &\ \ \ +\Pro\biggl(\abs{K_1\frac{\mu\cdot M_T}{k}}>T^{1/6}\biggr)\\
            &\ \ \ +\Pro\biggl(\abs{K_1\frac{\mu\cdot(2-2\me^{-kT})\cdot F_T}{k}}>T^{7/6}\biggr)\\
            &\ \ \ +\Pro\biggl(\abs{K_1 \cdot F^2_T}>T^{7/6}\biggr).
    \end{align*}
    Combining the Chebyshev inequality, \Cref{Lemma_MT} and the Proposition 3.10 of \cite{peixingzhi}, we can obtain 
    \begin{align*}
        \Pro\biggl(\abs{K_1\frac{\mu\cdot\me^{-kT}\cdot Z_T}{k}}>T^{1/6}\biggr)&\leq \frac{C^\prime_1\mE(Z_T^2)}{T^{1/3}}\leq \frac{C_1}{T^{1/3}},\\
        \Pro\biggl(\abs{K_1\frac{\mu\cdot M_T}{k}}>T^{1/6}\biggr)&\leq \frac{C^{\prime}_{2}\mE(M_T^2)}{T^{1/3}}\leq \frac{C_2}{T^{1/3}},\\
        \Pro\biggl(\abs{K_1\frac{\mu\cdot(2-2\me^{-kT})\cdot F_T}{k}}>T^{13/6}\biggr)&\leq \frac{C^\prime_3\mE(F_T^2)}{T^{13/3}}\leq \frac{C_3}{T^{13/3-2\beta}},\\
        \Pro\Bigl(\abs{K_1 \cdot F^2_T}>T^{13/6}\Bigr)&\leq \frac{C^\prime_4\mE(F_T^2)}{T^{13/6}}\leq \frac{C_4}{T^{13/6-2\beta}}.
    \end{align*}
    Then we have 
    \begin{equation}
        \Pro\biggl(\abs{\sqrt{\frac{kT}{a^2\sigma_\beta^2}}I_4}>T^{-1/3}\biggr)+\frac{T^{-1/3}}{\sqrt{2\pi}}\leq \frac{C_{k,\beta}^\prime}{T^{1/3}},
        \label{secondterm}
    \end{equation}
    where $C_{k,\beta}^\prime$ is a constant. Combining formulas \eqref{firstterm} and \eqref{secondterm}, we obtain the desired result.
\end{proof}
\section{Berry-Ess\'{e}en upper bounds of least squares estimators}
For the convenience of following proof, we introduce some variables:
\begin{align*}
    a_T &= 1-\me^{-kT},\;\;\;
    b_T =\frac{1}{T}\int_0^T \norm{\me^{-k(t-\cdot)}\mathbbm{1}_{[0,t]}(\cdot)}_{\mathfrak{H}}^2\dif t\to C_\beta\Gamma(2\beta-1)k^{-2\beta},\\
    c_T &= \int_0^T \mu^2(1-\me^{-kt})^2\dif t = \mu^2(T+\frac{2}{k}(\me^{-kT}-1)+\frac{1}{2k}(1-\me^{-2kT})),\\
    d_T &= \int_0^T (1-\me^{-kt})\dif t=T+\frac{1}{k}(\me^{-kT}-1).
\end{align*}
The Proposition 3.10 of \cite{peixingzhi}  and \Cref{lemma_mutli_equa} ensure that 
\begin{align*}
    e_T &=\norm{l_T\otimes_1 l_T}_{\mathfrak{H}}^2 
    \leq C T^{2\beta},\\
    q_T &=\normh{l_T\otimes_1 k_T}^2 
    \leq \normh{k_T\otimes_1 \frac{1}{k}}^2\leq 
    C T^{2\beta},
\end{align*}
where $C$ is a constant independent of $T$. Also, we show $l_T$ and other functions:
\begin{align*}
    f_T(t,s) &= \me^{-k\abs{t-s}}\mathbbm{1}_{[0,T]^2}(t,s),\; \; 
    & h_T(t,s) = \me^{-k(T-t)-k(T-s)}\mathbbm{1}_{[0,T]^2}(t,s),\\
    g_T(t,s) &= \frac{1}{2kT} f_T-h_T,\\
    k_T(s) &=\me^{-k(T-s)}\mathbbm{1}_{[0,T]}(s),
    & l_T(s) = \frac{1}{k}(1-\me^{-k(T-s)})\mathbbm{1}_{[0,T]}(s),\\
    m_T(s) &= \me^{-ks}\mathbbm{1}_{[0,T]}(s),
    & n_T(s) = \frac{1}{2k}(\me^{-k(2T-s)}-1)\mathbbm{1}_{[0,T]}(s).
\end{align*}
Furthermore, we denote $I_1(f_T)$ as $I_1(f_T(t,\cdot)\mathbbm{1}_{[0,T]}(\cdot))$. 

We now extent the Corollary 1 of \cite{kim2017optimal}.
\begin{theorem}\label{theorem_triple}
     Let $F_T/H_T$ be a zero-mean ratio process that contains at most triple Wiener-It\^{o} stochastic integrals, where $F_T = I_1(f_{1,T})+I_2(f_{2,T})+I_3(f_{3,T})$, $H_T = E_T+I_1(h_{1,T})+I_2(h_{2,T})+I_3(h_{3,T})$ and $E_T$ is a positive function of $T$ converging to a constant $\alpha$. Suppose that $\Psi_i(T)\to 0$, $i=1,\cdots,4$, as $T\to \infty$. Then, there exists constant $C$ such that when $T$ is large enough,
    \begin{equation}
        \begin{split}
            \sup_{z\in\Rnum} \abs{\Pro \Bigl(\frac{F_T}{H_T}\leq z\Bigr)-\Pro(Z\leq z)}&\leq C\cdot \max_{\substack{p,q=f,h\\m,n=1,2,3\\i=1,\cdots,(m\wedge n)}}\Bigl(\norm{p_{m,T}\tilde{\otimes}_i q_{n,T}}_{\mathfrak{H}^{\otimes (m+n-2i)}},\\
            &\;\;\;\iproct{p_{m,T}}{q_{m,T}}_{\mathfrak{H}^{\otimes m}},(E_T^2-\sum_{j=1}^m j!\norm{f_{j,T}}_{\mathfrak{H}^{\otimes j}}^2)\Bigr).  
        \end{split}
    \end{equation}
\end{theorem}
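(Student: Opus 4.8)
The plan is to invoke the abstract ratio bound of \Cref{kim2017} and then make each $\Psi_i(T)$ explicit through the chaos decompositions of $F_T$ and $H_T$. Since $\Psi_i(T)\to 0$ for $i=1,\dots,4$ by assumption, \Cref{kim2017} already gives $\sup_{z\in\Rnum}\abs{\Pro(F_T/H_T\le z)-\Pro(Z\le z)}\le c\max_{i}\Psi_i(T)$, so the entire task reduces to estimating each $\Psi_i(T)$ by the quantities appearing on the right-hand side. Here I would first record that $\mE H_T=E_T$, because the three multiple integrals in $H_T$ are centered; hence the prefactor $(\mE H_T)^{-2}=E_T^{-2}$ stays bounded as $T\to\infty$ (recall $E_T\to\alpha>0$) and can be absorbed into the final constant $C$.

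Next I would compute the Malliavin objects entering the $\Psi_i$. Using $DI_p(f_{p,T})=p\,I_{p-1}(f_{p,T}(\cdot,\star))$ and $-DL^{-1}I_q(g_{q,T})=I_{q-1}(g_{q,T}(\cdot,\star))$, where $\star$ denotes the free $\mathfrak{H}$-variable, every bracket of the form $\langle D(\cdot),-DL^{-1}(\cdot)\rangle_{\mathfrak{H}}$ occurring in $\Psi_1,\dots,\Psi_4$ becomes a finite sum, over $m,n\in\{1,2,3\}$, of the $\mathfrak{H}$-inner products $m\,\iproct{I_{m-1}(p_{m,T}(\cdot,\star))}{I_{n-1}(q_{n,T}(\cdot,\star))}_{\mathfrak{H}}$ in the free variable, where the pair $(p,q)$ runs over $(f,f),(f,h),(h,f),(h,h)$, corresponding respectively to $\Psi_1,\Psi_2,\Psi_3,\Psi_4$. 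For each such product of two multiple integrals I would apply the product formula \eqref{mutil_n}: integrating the free variable against $\mathfrak{H}$ turns an order-$s$ contraction of the $(m-1)$- and $(n-1)$-kernels into an order-$(s+1)$ contraction of the original kernels, so the kernels produced are exactly the symmetrized contractions $p_{m,T}\tilde{\otimes}_i q_{n,T}$ with $i=1,\dots,m\wedge n$.

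Then, taking expectations of squares and using the isometry \eqref{isometry} together with the orthogonality of distinct chaoses, each $\mE[\langle\cdots,\cdots\rangle_{\mathfrak{H}}^2]$ collapses to a finite sum of squared contraction norms $\norm{p_{m,T}\tilde{\otimes}_i q_{n,T}}_{\mathfrak{H}^{\otimes(m+n-2i)}}^2$. The deterministic (zeroth-chaos) parts are handled separately. The mean of $\iproct{DF_T}{-DL^{-1}F_T}_{\mathfrak{H}}$ equals $\mE[F_T^2]=\sum_{j}j!\norm{f_{j,T}}_{\mathfrak{H}^{\otimes j}}^2$, so subtracting it from $(\mE H_T)^2=E_T^2$ inside $\Psi_1$ produces precisely the variance-defect term $E_T^2-\sum_j j!\norm{f_{j,T}}_{\mathfrak{H}^{\otimes j}}^2$; likewise the means of the mixed brackets in $\Psi_2,\Psi_3,\Psi_4$ are the covariances $\sum_j j!\iproct{p_{j,T}}{q_{j,T}}_{\mathfrak{H}^{\otimes j}}$, which are exactly the full-contraction ($i=m=n$) terms recorded as the inner-product quantity in the maximum. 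Bounding the finitely many surviving summands by their maximum and collecting the universal combinatorial constants and the factor $E_T^{-2}$ into a single $C$ then yields the stated inequality.

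The main obstacle will be the combinatorial bookkeeping induced by the third chaos: with $m,n\le 3$ the product formula spawns many cross contractions, including the asymmetric ones arising from the $(f,h)$ and $(h,f)$ pairings, and one must verify that every kernel produced is dominated by one of the three admissible quantity types (a contraction norm in $\mathfrak{H}^{\otimes(m+n-2i)}$, an inner product in $\mathfrak{H}^{\otimes m}$, or the variance defect). A minor technical point is the replacement of $H_T^{-2}$ by $(\mE H_T)^{-2}$ built into \Cref{kim2017}; since that replacement is already carried out in \cite{kim2017optimal} and only contributes lower-order multiples of the same $\Psi_i$, I would cite it rather than reprove it here.
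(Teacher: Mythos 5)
Your proposal is correct and takes essentially the same route as the paper's own proof: invoke \Cref{kim2017}, expand each bracket $\iproct{D(\cdot)}{-DL^{-1}(\cdot)}_{\mathfrak{H}}$ via the product formula \eqref{mutil_n}, and use orthogonality of the Wiener chaoses to dominate each $\Psi_i(T)$ by the contraction norms, the inner products $\iproct{p_{m,T}}{q_{m,T}}_{\mathfrak{H}^{\otimes m}}$, and the variance-defect term $E_T^2-\sum_j j!\norm{f_{j,T}}_{\mathfrak{H}^{\otimes j}}^2$. The only difference is cosmetic: you make the zeroth-chaos bookkeeping (means of the brackets equaling variances and covariances) explicit, which the paper leaves implicit in its expansion of $\psi_4$ and $\psi_2$.
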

\begin{proof}
    We first consider $\Psi_4(T)=\frac{1}{(\mE H_T)^2}\sqrt{\mE[\iproct{DH_T}{-DL^{-1}(H_T-\mE H_T)}^2_{\mathfrak{H}}]}$. It is easy to see that $(\mE H_T)^2\to E_T^2\to \alpha^2\;\mathrm{a.s.}$.  
    Then we deal with $\sqrt{\mE[\iproct{DH_T}{-DL^{-1}(H_T-\mE H_T)}^2_{\mathfrak{H}}]}$. Denote $\iproct{DH_T}{-DL^{-1}(H_T-\mE H_T)}_{\mathfrak{H}}=\psi_4$, we have 
    \begin{small}
    \begin{align}
        \psi_4&=\iproct{h_{1,T}+2I_1(h_{2,T})+3I_2(h_{3,T})}{h_{1,T}+I_1(h_{2,T})+I_2(h_{3,T})}_{\mathfrak{H}}\notag\\
        &=\normh{h_{1,T}}^2+3h_{1,T} I_1(h_{2,T})+4h_{1,T} I_2(h_{3,T})\notag\\
        &\;\;\;+2\norm{h_{2,T}}_{\mathfrak{H}^{\otimes 2}}^2+2I_2(h_{2,T}\tilde{\otimes}_1 h_{2,T})\label{psi4}\\
        &\;\;\;+5I_3(h_{2,T}\tilde{\otimes}_1 h_{3,T})+10I_1(h_{2,T}\tilde{\otimes}_2 h_{3,T})\notag\\
        &\;\;\;+6\norm{h_{3,T}}_{\mathfrak{H}^{\otimes 2}}^2+3I_4(h_{3,T}\tilde{\otimes}_1 h_{3,T})+12I_2(h_{3,T}\tilde{\otimes}_2 h_{3,T}).\notag
    \end{align}
    \end{small}
    Following from the the orthogonality property of multiple integrals, we can obtain
    \begin{align}
        \Psi_4(T)&\leq C_1\cdot \max_{\substack{m,n=1,2,3\\i=1,\cdots,(m\wedge n)}}\Bigl(\norm{h_{m,T}\tilde{\otimes}_i h_{n,T}}_{\mathfrak{H}^{\otimes (m+n-2i)}},\norm{h_{m,T}}_{\mathfrak{H}^{\otimes m}}^2\Bigr),
        \label{bound_4}
    \end{align}
    where $C_1$ is a constant independent of $T$.

    We next consider $\Psi_2(T)$ and $\Psi_3(T)$. Denote $\iproct{DF_T}{-DL^{-1}(H_T-\mE H_T)}_{\mathfrak{H}}=\psi_2$, we have 
    \begin{align*}
        \psi_2&=\iproct{f_{1,T}+2I_1(f_{2,T})+3I_2(f_{3,T})}{h_{1,T}+I_1(h_{2,T})+I_2(h_{3,T})}_{\mathfrak{H}}\\
        &=f_{1,T}h_{1,T}+f_{1,T} I_1(h_{2,T})+f_{1,T} I_2(h_{3,T})\\
        &\;\;\;+2h_{1,T}I_1(f_{2,T})+3h_{1,T}I_2(f_{3,T})\\
        &\;\;\;+2\iproct{f_{2,T}}{h_{2,T}}_{\mathfrak{H}^{\otimes 2}}+2I_2(f_{2,T}\tilde{\otimes}_1 h_{2,T})\\
        &\;\;\;+2I_3(f_{2,T}\tilde{\otimes}_1 h_{3,T})+4I_1(f_{2,T}\tilde{\otimes}_2 h_{3,T})\\
        &\;\;\;+3I_3(f_{3,T}\tilde{\otimes}_1 h_{2,T})+6I_1(f_{3,T}\tilde{\otimes}_2 h_{2,T})\\
        &\;\;\;+6\iproct{f_{3,T}}{h_{3,T}}_{\mathfrak{H}^{\otimes 3}}+3I_4(f_{3,T}\tilde{\otimes}_1 h_{3,T})+12I_2(f_{3,T}\tilde{\otimes}_2 h_{3,T}).
    \end{align*}
    Simalarly, there exists a constant $C_2$ such that  
    \begin{align}
        \Psi_2(T)&\leq C_2\cdot \max_{\substack{p,q=f,h\\m,n=1,2,3\\i=1,\cdots,(m\wedge n)}}\Bigl(\norm{p_{m,T}\tilde{\otimes}_i q_{n,T}}_{\mathfrak{H}^{\otimes (m+n-2i)}},\iproct{f_{m,T}}{h_{m,T}}_{\mathfrak{H}^{\otimes m}}\Bigr).
        \label{bound_2}
    \end{align}
    Following from the above result, we can obtain the bound of $\Psi_3(T)$.

    We then deal with $\Psi_1(T)= \frac{1}{(\mE{H_T})^2} \sqrt{\mE[((\mE H_T)^2- \iproct{DF_T}{-DL^{-1} F_T}_{\mathfrak{H}})^2]}$. According to the orthogonality and \eqref{psi4}, we have
    \begin{align}
        \Psi_1(T)&\leq C_3\cdot \max_{\substack{m,n=1,2,3\\i=1,\cdots,(m\wedge n)}}\Bigl(\norm{f_{m,T}\tilde{\otimes}_i f_{n,T}}_{\mathfrak{H}^{\otimes (m+n-2i)}},(E_T^2-\sum_{i=1}^m j!\norm{f_{j,T}}_{\mathfrak{H}^{\otimes j}}^2)\Bigr).
        \label{bound_1}
    \end{align}
    Combining formulas \eqref{bound_4}, \eqref{bound_2} and \eqref{bound_1}, we obtain the desired result.
\end{proof}

We now prove the convergence speed of $\kls$. First, we review the CLT of $\kls$.
\begin{theorem}[\cite{peixingzhi} Propositions 4.20]
    When $\beta\in(1/2,3/4)$ and \Cref{hyp_1} holds, $\kls$ satisfies the following central limit theorem:
    \begin{align}
        \sqrt{T}(\kls - k) &= \frac{\frac{V_T}{\sqrt{T}}\hat{\mu}+k\sqrt{T}\hat{\mu}(\hat{\mu}-\mu)-\frac{1}{\sqrt{T}}\int_0^T V_t\dif G_t}{\frac{1}{T}\int_0^T V_t^2 \dif t - (\frac{1}{T}\int_0^T V_t\dif t)^2}\distribution \mathcal{N}(0,k\sigma_\beta^2).
            \label{klsclt}
    \end{align}
\end{theorem}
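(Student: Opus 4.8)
The statement has two logically separate halves: the exact algebraic identity exhibiting $\sqrt{T}(\kls-k)$ as a ratio, and the weak convergence of that ratio to $\mathcal{N}(0,k\sigma_\beta^2)$. The plan is to secure the identity first, purely by manipulation, and then analyze numerator and denominator of the resulting quotient separately.

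For the identity I would start from the defining formula \eqref{mu_ls} and write $\kls-k$ as the single fraction obtained by subtracting $k$ times the denominator from the numerator, so that $\sqrt{T}(\kls-k)$ equals $\sqrt{T}$ times
\[
\frac{V_T\int_0^T V_t\dif t - T\int_0^T V_t\dif V_t - k\bigl[T\int_0^T V_t^2\dif t-(\int_0^T V_t\dif t)^2\bigr]}{T\int_0^T V_t^2\dif t-(\int_0^T V_t\dif t)^2}.
\]
The decisive move is to eliminate the It\^o--Skorohod integral through the SDE \eqref{Vmdif} with $\sigma=1$, namely $\int_0^T V_t\dif V_t = k\mu\int_0^T V_t\dif t - k\int_0^T V_t^2\dif t + \int_0^T V_t\dif G_t$. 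Substituting this, the two $\int_0^T V_t^2\dif t$ contributions cancel; writing $\int_0^T V_t\dif t = T\hat\mu$ and regrouping the $\mu$-terms as $kT^2\hat\mu(\hat\mu-\mu)$, the numerator collapses to $T\hat\mu V_T + kT^2\hat\mu(\hat\mu-\mu) - T\int_0^T V_t\dif G_t$. Writing the denominator as $T^2$ times the mean-adjusted functional $\frac1T\int_0^TV_t^2\dif t-(\frac1T\int_0^TV_t\dif t)^2$ and carrying the factor $\sqrt T$ through, one divides that numerator by $T^{3/2}$, which reproduces the three terms in \eqref{klsclt} exactly.

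For the convergence I would treat denominator and numerator in turn. The denominator tends in probability to $\alpha=C_\beta\Gamma(2\beta-1)k^{-2\beta}$: this is precisely the quantity underlying the moment estimator $\hat k$, and its limit follows from the decomposition $V_t=\mu(1-\me^{-kt})+X_t$ together with the $L^2$/ergodic convergence of $\frac1T\int_0^T X_t^2\dif t$. In the numerator the term $\frac{V_T}{\sqrt T}\hat\mu$ is negligible, since $V_T$ is bounded in $L^2$ (so $V_T/\sqrt T\to0$) while $\hat\mu\to\mu$. The delicate part is the remaining pair: both $k\sqrt T\hat\mu(\hat\mu-\mu)$ and $-\frac1{\sqrt T}\int_0^T V_t\dif G_t$ individually diverge at rate $T^{\beta-1/2}$, through first-chaos pieces $\pm\tfrac{\mu}{\sqrt T}G_T$ — one arising from $\hat\mu-\mu\approx F_T/T=I_1(l_T)/T$, the other from the Wiener-integral part of $\int_0^TV_t\dif G_t$. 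The plan is to show these divergent first-chaos contributions cancel, leaving the genuinely second-chaos object $-\frac1{\sqrt T}\int_0^T X_t\dif G_t$; expanding this Skorohod integral by the product formula \Cref{lemma_mutli_equa} as a double Wiener--It\^o integral plus a trace term, its asymptotic normality follows from the fourth-moment theorem, and the limiting variance of the full ratio works out to $k\sigma_\beta^2$ after division by $\alpha$.

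The main obstacle is exactly this cancellation-plus-variance step. Controlling it requires precise asymptotics of the $\mathfrak{H}$-inner products of the kernels $k_T,l_T,m_T$ under \Cref{hyp_1}; using \Cref{prop_norm_ieq} to replace $\normh{\cdot}$ by the $\norm{\cdot}_{\mathfrak{H}_1}$ norm reduces everything to the fractional kernel $\abs{r_1-r_2}^{2\beta-2}$, and the constant $\sigma_\beta^2$ of \eqref{sigma_beta} then emerges from the Gamma-function identities governing the contractions (this is also where $\beta<3/4$ enters, to keep the relevant norms of order $T$). Since \Cref{CLT} already records the limit law $\mathcal{N}(0,k\sigma_\beta^2)$, one could alternatively invoke it directly once the identity is established; but the self-contained route through the second-chaos central limit theorem is what later makes the quantitative Berry--Ess\'een analysis of \eqref{hat_kls} tractable.
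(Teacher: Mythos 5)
Your proposal is correct, but there is no proof in the paper to compare it against line by line: the paper imports this theorem verbatim from Pei et al.\ (Proposition 4.20 of \cite{peixingzhi}), and its convergence half is already recorded in \Cref{CLT}. What you have reconstructed is essentially the cited source's argument, and it coincides with the decomposition the paper itself carries out immediately after the statement. Your algebraic step --- substituting $\int_0^T V_t\dif V_t=k\mu\int_0^T V_t\dif t-k\int_0^T V_t^2\dif t+\int_0^T V_t\dif G_t$ into \eqref{mu_ls}, cancelling the $\int_0^T V_t^2\dif t$ terms and writing $\int_0^T V_t\dif t=T\hat\mu$ --- is exactly how the ratio \eqref{klsclt} arises. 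The cancellation of the divergent first-chaos pieces $\pm\frac{\mu}{\sqrt{T}}G_T$, which you correctly single out as the crux, is mirrored in the paper's \Cref{lemma_formal}: the kernel $f_{1,T}$ in \eqref{formula_I_1} is built from $k_T$, $l_T$, $m_T$ only, with no $\mathbbm{1}_{[0,T]}$ component, precisely because the $G_T$ contributions from $k\sqrt{T}\hat\mu(\hat\mu-\mu)$ and from $-\frac{1}{\sqrt{T}}\int_0^T V_t\dif G_t$ cancel; the surviving dominant second-chaos term is $-\frac{1}{2\sqrt{T}}I_2(f_T)$, whose asymptotic normality with the constant \eqref{sigma_beta} is what the paper delegates to Theorem 1.4 of \cite{chen_parameter_2021}, and the denominator limit you use is \eqref{limit_J_0}. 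The difference is one of packaging: you prove the qualitative CLT directly, whereas the paper black-boxes it and instead proves the quantitative refinement \eqref{hat_kls} via \Cref{theorem_triple}; your route makes the statement self-contained, the paper's route yields the rate. Two points to tighten if you flesh this out: $\beta<3/4$ is needed in two distinct places, namely for the vanishing of the contraction norm $\norm{f_T\otimes_1 f_T}_{\mathfrak{H}^{\otimes 2}}/T$ in the fourth-moment theorem and to kill the quadratic remainder $k\sqrt{T}(\hat\mu-\mu)^2=O(T^{2\beta-3/2})$ hidden inside $k\sqrt{T}\hat\mu(\hat\mu-\mu)$; and the negligibility of $\frac{V_T}{\sqrt{T}}\hat\mu$ rests on $\sup_T\mE[V_T^2]<\infty$, which follows from the computation of \Cref{Lemma_MT} applied to $k_T$ in place of $m_T$.
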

We then transfrom the above $\sqrt{T}(\kls - k)$ as multiple Wiener-It\^{o} integrals.
\begin{proposition}\label{lemma_formal}
    Let $X_t = \int_0^t \me^{-k(t-s)}\dif G_s$ be a OU process driven by $G_t$. $\sqrt{T}(\kls-k)$ can be rewritten as:
    \begin{equation}
        \sqrt{T}(\kls-k)= \frac{I_0 +I_1(f_{1,T}) +I_2(f_{2,T})}{J_0 +I_1(h_{1,T})+J_2(h_{2,T})}.
    \end{equation}
    where
    \begin{subequations}
    \begin{align}
        I_0 &= \frac{1}{T^{3/2}}\Bigl(\mu^2 a_T\cdot d_T+q_T+\frac{\mu^2}{k}a_T^2+k e_T\Bigr)+\frac{\mu^2}{T^{1/2}}(\me^{-kT}-1),\label{formula_I_0}\\
        f_{1,T} &= \frac{\mu}{T^{3/2}}\bigl(d_T k_T-a_T l_T\bigr) +\frac{\mu}{\sqrt{T}}\bigl(m_T-k_T\bigr),\label{formula_I_1}\\
        f_{2,T} &= \frac{1}{T^{3/2}}l_T\otimes k_T+\frac{k}{T^{3/2}} l_T\otimes l_T-\frac{1}{2\sqrt{T}}f_T,\label{formula_I_2}\\
        J_0 &= \frac{c_T}{T} +b_T -\frac{1}{T^2}(\mu^2 d_T^2+e_T),\label{formula_J_0}\\
        h_{1,T} &= \frac{2\mu}{T}\bigl( l_T+ n_T\bigr) -\frac{2}{T^2}d_T l_T,\label{formula_J_1}\\
        h_{2,T} &=  g_T -\frac{1}{T^2}l_T\otimes l_T\label{formula_J_2}.
    \end{align}
    \end{subequations}
\end{proposition}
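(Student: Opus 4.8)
The plan is to start from the central limit representation \eqref{klsclt} and to rewrite its numerator and denominator as finite sums of multiple Wiener-It\^o integrals, after which the kernels $f_{j,T},h_{j,T}$ and the scalars $I_0,J_0$ are read off by collecting terms of equal chaos order. The starting observation is the decomposition $V_t=\mu(1-\me^{-kt})+X_t$, where $X_t=\int_0^t\me^{-k(t-s)}\dif G_s=I_1(k_t)$ with $k_t(s)=\me^{-k(t-s)}\mathbbm{1}_{[0,t]}(s)$. This splits every functional of $V$ appearing in \eqref{klsclt} into a deterministic part and a part built from the first-chaos process $X$.

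First I would reduce the four building blocks $\hat\mu$, $V_T$, $\int_0^T V_t\dif G_t$ and $\int_0^T V_t^2\dif t$ to chaos expansions. By the stochastic Fubini theorem, $\int_0^T X_t\dif t=I_1(l_T)$, so $\hat\mu=\tfrac1T(\mu d_T+I_1(l_T))$ and $V_T=\mu a_T+I_1(k_T)$. For products of two first-chaos integrals I would invoke the product formula \Cref{lemma_mutli_equa} in the form $I_1(g)I_1(h)=I_2(g\tilde\otimes h)+\iproct{g}{h}_{\mathfrak{H}}$; the same formula gives $X_t^2=I_2(k_t\otimes k_t)+\normh{k_t}^2$. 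The one genuinely non-pathwise object is the It\^o-Skorohod integral $\int_0^T X_t\dif G_t=\delta(X\mathbbm{1}_{[0,T]})$; using $\delta(I_1(k_\cdot))=I_2(\widetilde{k})$ and the fact that the symmetrization of $\me^{-k(t-s)}\mathbbm{1}_{\{s\le t\}}$ equals $\tfrac12\me^{-k\abs{t-s}}$, this becomes $\tfrac12 I_2(f_T)$, which is exactly the $-\tfrac{1}{2\sqrt T}f_T$ contribution to $f_{2,T}$.

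Next I would assemble the numerator and denominator separately. For the denominator, $\tfrac1T\int_0^T X_t^2\dif t=I_2\bigl(\tfrac1T\int_0^T k_t\otimes k_t\dif t\bigr)+b_T$, where the trace integral $\int_0^T\normh{k_t}^2\dif t=Tb_T$ produces the constant $b_T$ and $\int_0^T k_t\otimes k_t\dif t=\tfrac1{2k}(f_T-h_T)$ assembles into the kernel $g_T$; subtracting $\hat\mu^2=\tfrac{\mu^2 d_T^2}{T^2}+\tfrac{2\mu d_T}{T^2}I_1(l_T)+\tfrac1{T^2}\bigl(I_2(l_T\otimes l_T)+e_T\bigr)$ then delivers $J_0$, $h_{1,T}$ and $h_{2,T}$, once the cross term $\tfrac{2\mu}T\int_0^T(1-\me^{-kt})X_t\dif t=\tfrac{2\mu}T I_1(l_T+n_T)$ is added. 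For the numerator the same substitutions turn $\tfrac{V_T}{\sqrt T}\hat\mu$ and $k\sqrt T\hat\mu(\hat\mu-\mu)$ into sums of zeroth-, first- and second-chaos terms; the constant scalars are precisely the traces $\iproct{k_T}{l_T}_{\mathfrak{H}}=q_T$ and $\normh{l_T}^2=e_T$, together with the deterministic products $\mu^2 a_T d_T$ and $k\mu^2 d_T(d_T-T)$, which reorganize into $I_0$ using $d_T-T=\tfrac1k(\me^{-kT}-1)$ and $(1-\me^{-kT})^2=a_T^2$.

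The key algebraic point in matching the first-chaos kernel $f_{1,T}$ is that pieces of order $T^{-1/2}$ and $T^{-3/2}$ must be recombined: the identity $k\,l_T=\mathbbm{1}_{[0,T]}-k_T$ lets the contribution of $k\sqrt T\,\hat\mu(\hat\mu-\mu)$ cancel against that of $-\tfrac{1}{\sqrt T}\int_0^T V_t\dif G_t$, so that only $\tfrac{\mu}{\sqrt T}(m_T-k_T)$ survives at order $T^{-1/2}$, while the remaining $T^{-3/2}$ pieces collapse to $\tfrac{\mu}{T^{3/2}}(d_T k_T-a_T l_T)$ after using $a_T+2(\me^{-kT}-1)=-a_T$. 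I expect the main obstacle to be exactly this bookkeeping: correctly handling the Skorohod integral (including its symmetrization) and tracking every contraction scalar so that it lands inside $I_0$ or $J_0$ rather than being discarded, while the differently normalized first-chaos terms are merged by the elementary identities above. No probabilistic input such as self-similarity is needed here, since the claim is an exact identity in Wiener chaos; the difficulty is purely organizational.
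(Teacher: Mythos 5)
Your proposal follows essentially the same route as the paper's own proof: decompose $V_t=\mu(1-\me^{-kt})+X_t$, write $V_T=\mu a_T+I_1(k_T)$ and $\hat\mu=\tfrac1T(\mu d_T+I_1(l_T))$, expand all products via the product formula, represent the Skorohod integral as $\tfrac12 I_2(f_T)$, and collect terms by chaos order, with the same key cancellation $kI_1(l_T)=G_T-I_1(k_T)$ producing $f_{1,T}$ and the trace scalars landing in $I_0$ and $J_0$. The identities you invoke all check out, so the proposal is correct and matches the paper's argument.
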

\begin{proof}
    We first deal with the numerator of \eqref{klsclt}:
    \begin{equation}
        \frac{V_T}{\sqrt{T}}\hat{\mu}+k\sqrt{T}\hat{\mu}(\hat{\mu}-\mu)-\frac{1}{\sqrt{T}}\int_0^T V_t\dif G_t.
        \label{numerator_1}
    \end{equation}    
    According to the definition of above functions, $V_T = \mu a_T +I_1(k_T)$.  Combining with \Cref{lemma_mutli_equa}, we can obtain
    \begin{align*}
        \frac{V_T}{\sqrt{T}}\hat{\mu} &= \frac{\mu a_T +I_1(k_T)}{T^{1/2}} \frac{ \mu d_T + I_1(l_T)}{T}\\
        &= \frac{1}{T^{3/2}} \bigl(\mu^2 a_T\cdot d_T + \mu a_T I_1(l_T) +\mu d_T I_1(k_T)+I_1(l_T)I_1(k_T)\bigr)\\
        &= \frac{1}{T^{3/2}} \Bigl(\mu^2 a_T\cdot d_T + \mu a_T I_1(l_T) +\mu d_T I_1(k_T)\\
        &\ \ \ +I_2(l_T\otimes k_T)+\normh{l_T\otimes_1 k_T}^2\Bigr).
    \end{align*}
    Then the second term of \eqref{numerator_1}. Let $Item_2:=k\sqrt{T}\hat{\mu}(\hat{\mu}-\mu)$, we have
    \begin{align*}
        Item_2 &= k\sqrt{T}(\hat{\mu}-\mu+\mu)(\hat{\mu}-\mu)\\
        &=k\sqrt{T}(\hat{\mu}-\mu)^2+k\sqrt{T}\mu(\hat{\mu}-\mu)\\
        &=\frac{k}{T^{3/2}}\Bigl(\frac{\mu^2}{k^2}(\me^{-kT}-1)^2+\frac{2\mu}{k}(\me^{-kT}-1)I_1(l_T)+I_2(l_T\otimes l_T)+e_T\Bigr)\\
        &\ \ \ +\frac{k\mu}{T^{1/2}}\Bigl(\frac{\mu}{k}(\me^{-kT}-1)+I_1(l_T)\Bigr)\\
        &=\frac{k}{T^{3/2}}\Bigl(\frac{\mu^2}{k^2}(\me^{-kT}-1)^2+\frac{2\mu}{k}(\me^{-kT}-1)I_1(l_T)+I_2(l_T\otimes l_T)+e_T\Bigr)\\
        &\ \ \ +\frac{\mu^2}{T^{1/2}}(\me^{-kT}-1)+\frac{\mu}{T^{1/2}}\bigl(G_T-I_1(k_T)\bigr).
    \end{align*}
    Besides, we can obtain
    \begin{align*}
        -\frac{1}{\sqrt{T}}\int_0^T V_t \dif G_t &= -\frac{1}{\sqrt{T}}\Bigl(\int_0^T \mu(1-\me^{-kt})\dif G_t+ \frac{1}{2} I_2(f_T)\Bigr)\\
        &= -\frac{1}{\sqrt{T}}\Bigl(\mu (G_T-I_1(m_T)) + \frac{1}{2} I_2(f_T)\Bigr).
    \end{align*}

    Next, we consider the denominator. For $\frac{1}{T}\int_0^T V_t^2\dif t$, we have
    \begin{align*}
        \frac{1}{T}\int_0^T V_t^2\dif t &= \frac{1}{T}\int_0^T (\mu(1-\me^{-kt})+X_t)^2 \dif t\\
        &= \frac{1}{T}\Bigl(\int_0^T \mu^2(1-\me^{-kt})^2\dif t + 2\int_0^t \mu(1-\me^{-kt})X_t\dif t +\int_0^T X_t^2\dif t\Bigr)\\
        &= \frac{1}{T}\Bigl(c_T +2\mu \int_0^T\int_0^t (1-\me^{-kt})\me^{-k(t-s)}\dif G_s\dif t\Bigr) +I_2(g_T)+b_T\\
        &= \frac{c_T}{T} + \frac{2\mu}{T}\bigl( I_1(l_T)+ I_1(n_T)\bigr) +I_2(g_T)+b_T.
    \end{align*}
    Since $\hat{\mu}^2=(\frac{1}{T}\int_0^T V_t\dif t)^2$, we can obtain
    \begin{align*}
        \Bigl(\frac{1}{T}\int_0^T V_t\dif t\Bigr)^2 &= \frac{1}{T^2}\Bigl(\mu d_T+I_1(l_T)\Bigr)^2\\
        &=\frac{1}{T^2}\Bigl(\mu^2 d_T^2+2\mu d_T I_1(l_T)+I_1^2(l_T)\Bigr)\\
        &=\frac{1}{T^2}\Bigl(\mu^2 d_T^2+2\mu d_T I_1(l_T)+I_2(l_T\otimes l_T)+e_T\Bigr).
    \end{align*}
    Combining the above formulas, we obtain the desired result.
\end{proof}

For simplicity, let $F_T:= (I_1(f_{1,T})+I_2(f_{2,T}))/(\sqrt{k}\sigma_\beta),\; H_T := J_0+I_1(h_{1,T})+I_2(h_{2,T})$. We show the convergence speed of zero-mean part. 
\begin{lemma}\label{lemma_kls}
    Let $Z\sim\mathcal{N}(0,1)$ be a standard Normal variable. Assume $\beta\in(1/2,3/4)$ and \Cref{hyp_1} holds. When $T$ is large enough, there exists constant $C_{\beta,V}^\prime$ such that
    \begin{equation}
        \sup_{z\in\Rnum}\abs{\Pro\left(\frac{F_T}{H_T}\leq z\right)-\Pro(Z\leq z)}\leq \frac{C_{\beta,V}^\prime}{T^\gamma},
    \end{equation}
    where $\gamma = \min{\{1/2, 3-4\beta\}}$. 
\end{lemma}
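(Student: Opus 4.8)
The plan is to recognize that $F_T$ and $H_T$ are exactly of the form handled by \Cref{theorem_triple}. Indeed, by \Cref{lemma_formal} the numerator $F_T=(I_1(f_{1,T})+I_2(f_{2,T}))/(\sqrt{k}\sigma_\beta)$ carries no third-chaos component (so $f_{3,T}=0$), while the denominator $H_T=J_0+I_1(h_{1,T})+I_2(h_{2,T})$ has $h_{3,T}=0$ and deterministic part $E_T=J_0$, which by the definitions of $b_T,c_T,d_T,e_T$ converges to the constant $\alpha=C_\beta\Gamma(2\beta-1)k^{-2\beta}$. First I would confirm the hypotheses of \Cref{theorem_triple} (absolute continuity of $F_T+zH_T$ and $\Psi_i(T)\to0$, the latter being a by-product of the quantitative estimates below), so that the theorem bounds $\sup_{z}\abs{\Pro(F_T/H_T\le z)-\Pro(Z\le z)}$ by a constant times the maximum of three explicit families of quantities.

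It then remains to estimate, uniformly in $T$: (i) the contraction norms $\norm{p_{m,T}\tilde{\otimes}_i q_{n,T}}_{\mathfrak{H}^{\otimes(m+n-2i)}}$ for $p,q\in\{f,h\}$, $m,n\in\{1,2\}$, $1\le i\le m\wedge n$; (ii) the inner products $\iproct{f_{m,T}}{h_{m,T}}_{\mathfrak{H}^{\otimes m}}$ for $m=1,2$; and (iii) the variance defect $J_0^2-(\normh{f_{1,T}}^2+2\norm{f_{2,T}}_{\mathfrak{H}^{\otimes 2}}^2)/(k\sigma_\beta^2)$. The engine for all of these is \Cref{prop_norm_ieq}, which lets me replace every $\mathfrak{H}$- and $\mathfrak{H}^{\otimes 2}$-computation by the explicit $\mathfrak{H}_1$- and $\mathfrak{H}_2$-kernels $\abs{r_1-r_2}^{2\beta-2}$ and $(r_1 r_2)^{\beta-1}$ (together with the operator $K$), up to genuinely lower-order error. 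Expanding each $f_{m,T},h_{m,T}$ into its constituent pieces from \eqref{formula_I_1}--\eqref{formula_J_2} and invoking the elementary scalings $\normh{l_T},\normh{n_T}=O(T^\beta)$, $\normh{k_T},\normh{m_T}=O(1)$ (the latter via \Cref{Lemma_MT}), $\norm{f_T}_{\mathfrak{H}^{\otimes 2}}=O(T^{1/2})$, and $e_T,q_T=O(T^{2\beta})$, I would bound every term by an explicit power of $T$.

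Tracking the largest contributions, the slowest-decaying quantities are the second-chaos ones. The self-contraction of the Gaussian-producing kernel $f_{2,T}$, whose leading piece is $-\tfrac{1}{2\sqrt{T}\sqrt{k}\sigma_\beta}f_T$, contributes $\norm{f_{2,T}\tilde{\otimes}_1 f_{2,T}}_{\mathfrak{H}^{\otimes 2}}\sim\tfrac{1}{T}\norm{f_T\otimes_1 f_T}_{\mathfrak{H}^{\otimes 2}}=O(T^{-(3-4\beta)})$, which is the fourth-moment term governing the Gaussian approximation. The cross inner product between numerator and denominator satisfies $\iproct{f_{2,T}}{h_{2,T}}_{\mathfrak{H}^{\otimes 2}}\sim\tfrac{1}{T^{3/2}}\norm{f_T}_{\mathfrak{H}^{\otimes 2}}^2=O(T^{-1/2})$, and the defect (iii) is likewise $O(T^{-1/2})$. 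Every remaining contraction and inner product (all those built from $h_{1,T}$, from $l_T$-type factors, or from the corner kernel $h_T=k_T\otimes k_T$) decays strictly faster; for instance the $\Psi_4$-type terms $\normh{h_{1,T}}^2=O(T^{2\beta-2})$ and $\norm{h_{2,T}}_{\mathfrak{H}^{\otimes 2}}^2=O(T^{-1})$ are both $o(T^{-1/2})$ since $\beta\in(1/2,3/4)$. Taking the maximum therefore produces the bound $C_{\beta,V}'\,T^{-\gamma}$ with $\gamma=\min\{1/2,3-4\beta\}$, since $T^{-\gamma}=\max\{T^{-1/2},T^{-(3-4\beta)}\}$.

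The crux — and the step I expect to be hardest — is the sharp estimate $\norm{f_T\otimes_1 f_T}_{\mathfrak{H}^{\otimes 2}}=O(T^{4\beta-2})$. After reducing to the $\mathfrak{H}_1$-kernel one must show that $f_T\otimes_1 f_T(t,u)$ behaves like $\abs{t-u}^{2\beta-2}$ away from the diagonal, and then control the resulting four-fold singular ``loop'' integral $\int_{[0,T]^4}\abs{t-u}^{2\beta-2}\abs{t'-u'}^{2\beta-2}\abs{t-t'}^{2\beta-2}\abs{u-u'}^{2\beta-2}\dif t\,\dif u\,\dif t'\,\dif u'$, whose convergence hinges precisely on $\beta\in(1/2,3/4)$ and whose $T$-scaling produces the exponent $3-4\beta$. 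Keeping the correction terms supplied by \Cref{prop_norm_ieq} (those carrying the constant $C_\beta'$ and the operator $K$) genuinely subordinate is the other delicate bookkeeping point; once these two estimates are in hand, all remaining bounds are routine power counting.
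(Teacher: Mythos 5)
Your route is, at its core, the paper's route: feed the decomposition of \Cref{lemma_formal} (with $f_{3,T}=h_{3,T}=0$ and $E_T=J_0\to\alpha$) into \Cref{theorem_triple}, then power-count the contraction norms, the cross inner products and the variance defect, ending with $\max\{T^{-1/2},T^{-(3-4\beta)}\}=T^{-\gamma}$. The one structural difference is that the paper does not redo the hard estimates: it imports $\normh{l_T}^2,\normh{n_T}^2\leq CT^{2\beta}$ from Lemma 5.2.4 of \cite{nourdin_normal_2012}, the bounds on $\norm{g_T}_{\mathfrak{H}^{\otimes 2}}$, $\norm{g_T\otimes_1 g_T}_{\mathfrak{H}^{\otimes 2}}$ and $\norm{f_{2,T}\tilde{\otimes}_1 h_{2,T}}_{\mathfrak{H}^{\otimes 2}}\leq CT^{-1/2}$ from Theorem 1.4 and formula (5.13) of \cite{chen_parameter_2021}, and the defect bound \eqref{third_item} from Proposition 3.10 of \cite{peixingzhi}; your plan is a self-contained reconstruction of those citations via \Cref{prop_norm_ieq}. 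There is also a harmless bookkeeping discrepancy: the paper obtains the exponent $3-4\beta$ from the variance defect and the exponent $1/2$ from the cross contraction $f_{2,T}\tilde{\otimes}_1 h_{2,T}$, whereas you assign $3-4\beta$ to the self-contraction of $f_{2,T}$ and $1/2$ to the defect; since the final bound is a maximum over all terms, either assignment yields $T^{-\gamma}$.

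However, the estimate you single out as the crux is false on part of the parameter range, and the argument you sketch for it would fail there. The pure-power loop integral $\int_{[0,1]^4}\abs{x_1-x_2}^{2\beta-2}\abs{x_2-x_3}^{2\beta-2}\abs{x_3-x_4}^{2\beta-2}\abs{x_4-x_1}^{2\beta-2}\dif x$ has a codimension-$3$ singularity where all four variables collide, with total exponent $4(2\beta-2)$; it converges only when $8\beta-8>-3$, i.e. $\beta>5/8$, not on all of $(1/2,3/4)$ as you assert. Correspondingly, $\norm{f_T\otimes_1 f_T}_{\mathfrak{H}^{\otimes 2}}=O(T^{4\beta-2})$ is wrong for $\beta\in(1/2,5/8)$: there the kernel $(f_T\otimes_1 f_T)(t,u)$ is bounded near the diagonal, the near-diagonal region dominates, and in fact $\norm{f_T\otimes_1 f_T}_{\mathfrak{H}^{\otimes 2}}=\Theta(T^{1/2})$, which is larger than $T^{4\beta-2}$. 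This does not sink the proof, because the correct two-regime statement $\norm{f_T\otimes_1 f_T}_{\mathfrak{H}^{\otimes 2}}=O\bigl(\max\{T^{1/2},T^{4\beta-2}\}\bigr)$ still gives $\tfrac{1}{T}\norm{f_T\otimes_1 f_T}_{\mathfrak{H}^{\otimes 2}}=O(T^{-\gamma})$, exactly matching $\gamma=\min\{1/2,3-4\beta\}$ and explaining the break-point at $\beta=5/8$ in the statement. But you must carry out the estimate in this split form (a diagonal strip treated separately from the off-diagonal scaling region), rather than by the single scaling argument you propose, which diverges for $\beta\leq 5/8$.
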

\begin{proof}
    According to \cite{peixingzhi} formulas (9) and (47),
    \begin{align*}
        \mE H_T= \mE J_0\to \alpha= C_\beta \Gamma(2\beta-1)k^{-2\beta}\;\;\; \mathrm{a.s.}.
    \end{align*}
    Combining with \cite{nourdin_normal_2012} Lemma 5.2.4, we can obtain 
    \begin{align}
        \max\{\normh{l_T}^2,\normh{n_T}^2\}\leq  C^{(1)} T^{2\beta},
        \label{norm_h}
    \end{align}
    where $C^{(1)}$ is a constant, and $\normh{h_{1,T}}^2\leq C^{(2)}/ T^{2-2\beta}$.

    Following from \cite{chen_parameter_2021} Theorem 1.4 and \eqref{norm_h}, we have
    \begin{align*}
        \norm{g_T}_{\mathfrak{H}^{\otimes 2}} \leq \frac{C^{(3)}}{\sqrt{T}},\;\;\; \norm{g_T\otimes_1 g_T}_{\mathfrak{H}^{\otimes 2}}\leq \frac{C^{(3)}}{T},\;\;\;\norm{\frac{2}{T^2}l_T\otimes l_T}_{\mathfrak{H}^{\otimes 2}}^2\leq \frac{C^{(3)}}{T^{4-4\beta}},
    \end{align*}
    where $C^{(3)}$ is a constant independent of $T$.
    Minkowski inequality ensures that  $\norm{h_{2,T}}_{\mathfrak{H}^{\otimes 2}}^2\leq C^{(4)}/ T$. Simalarly, the Proposition 3.10 of \cite{peixingzhi} induces that 
    \begin{align}
        J_0^2-\sum_{j=1}^m \frac{j!\norm{f_{j,T}}_{\mathfrak{H}^{\otimes j}}^2}{k\sigma_\beta^2}\leq C^{(5)}/T^{3-4\beta},
        \label{third_item}
    \end{align}
    and $\normh{f_{1,T}}^2\leq C^{(5)}/ T$. The formula (5.13) of\cite{chen_parameter_2021} ensures that 
    \begin{align*}
        \norm{f_{2,T}\tilde{\otimes}_{1} h_{2,T}}_{\mathfrak{H}^{\otimes 2}}\leq C^{(5)}/ T^{1/2}.
    \end{align*} 
    Combining the above results, \Cref{theorem_triple} and Cauchy-Schwarz inequality, we obtain the Lemma.
\end{proof}

The following Lemma shows the bound of non-zero mean part.
\begin{lemma}\label{lemma_kls1}
    Assume $\beta\in(1/2,3/4)$ and \Cref{hyp_1} holds. When $T$ is large enough, there exists constant $C_1$ such that
    \begin{equation*}
        \Pro\Bigl(\abs{L_1}> \frac{C_1}{T^{(3/4-\beta)}}\Bigr)=0,
    \end{equation*}
    where $L_1 = I_0/(\sqrt{k}\sigma_\beta H_T)$.
\end{lemma}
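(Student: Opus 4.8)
The plan is to exploit the structural fact that the numerator constant $I_0$ defined in \eqref{formula_I_0} carries no Wiener--It\^o integral: every quantity entering it --- $a_T$, $d_T$, $q_T=\normh{l_T\otimes_1 k_T}^2$, $e_T=\normh{l_T\otimes_1 l_T}^2$, together with $\mu$ and $k$ --- is deterministic. Hence $I_0$ is a deterministic function of $T$, and all the randomness of $L_1=I_0/(\sqrt{k}\sigma_\beta H_T)$ is confined to the denominator $H_T$. The argument then separates into a deterministic size estimate for $I_0$ and a lower bound for $H_T$.

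First I would estimate $\abs{I_0}$ directly from \eqref{formula_I_0}. Using $a_T=1-\me^{-kT}\le 1$, $0\le d_T\le T$, $\abs{\me^{-kT}-1}\le 1$, and the bounds $q_T\le CT^{2\beta}$ and $e_T\le CT^{2\beta}$ recorded just before \Cref{lemma_formal}, the bracket in \eqref{formula_I_0} is dominated by its $q_T$ and $ke_T$ terms, since $\mu^2 a_T d_T=O(T)$ and $\frac{\mu^2}{k}a_T^2=O(1)$ are of lower order once $2\beta>1$; thus the bracket is $O(T^{2\beta})$. Dividing by $T^{3/2}$ and comparing with the remaining term $\frac{\mu^2}{T^{1/2}}(\me^{-kT}-1)=O(T^{-1/2})$, and noting $2\beta-3/2>-1/2$ for $\beta>1/2$, I obtain $\abs{I_0}\le C\,T^{2\beta-3/2}$. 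Rewriting $2\beta-3/2=-2(3/4-\beta)$ and using $2(3/4-\beta)>3/4-\beta$ gives $\abs{I_0}\le C\,T^{-2(3/4-\beta)}=o(T^{-(3/4-\beta)})$, so $I_0$ is strictly smaller than the target order $T^{-(3/4-\beta)}$.

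For the denominator I would use that $H_T=\frac1T\int_0^T V_t^2\dif t-(\frac1T\int_0^T V_t\dif t)^2$ is a nonnegative empirical variance whose expectation $\mE H_T=J_0$ converges to $\alpha=C_\beta\Gamma(2\beta-1)k^{-2\beta}>0$, while the mean-zero chaoses $I_1(h_{1,T})$ and $I_2(h_{2,T})$ vanish in $L^2$ by the norm estimates established in \Cref{lemma_kls}; hence $H_T$ is, almost surely and for $T$ large, bounded below by $\alpha/2$. Combining this with the previous step yields $\abs{L_1}\le \frac{\abs{I_0}}{\sqrt{k}\sigma_\beta\,(\alpha/2)}\le C_1\,T^{-2(3/4-\beta)}\le C_1\,T^{-(3/4-\beta)}$, so that for a suitable $C_1$ and $T$ large the event $\{\abs{L_1}>C_1 T^{-(3/4-\beta)}\}$ is empty and therefore has probability $0$. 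I expect the denominator to be the only genuinely delicate point: showing that the random $H_T$ does not undo the deterministic smallness of $I_0$ requires an almost sure positive lower bound on $H_T$, rather than mere convergence in law, and this is where the strong-consistency/convergence properties of the empirical variance from \cite{peixingzhi} must be invoked with care. By contrast, the estimate of $I_0$ itself is routine once one recognizes that it is deterministic.
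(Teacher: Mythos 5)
Your proposal is correct and follows essentially the same route as the paper: a deterministic bound $\abs{I_0}\leq C\,T^{2\beta-3/2}=C\,T^{-2(3/4-\beta)}$ read off from \eqref{formula_I_0} together with $q_T,e_T\leq CT^{2\beta}$, combined with an almost-sure lower bound on $H_T$ coming from the a.s.\ convergence of the empirical variance to $\alpha=C_\beta\Gamma(2\beta-1)k^{-2\beta}$ (the paper cites Propositions 3.14 and Corollary 3.15 of \cite{peixingzhi} for exactly this). One caution: your intermediate claim that the chaoses vanishing in $L^2$ yields the a.s.\ lower bound is not a valid inference on its own, but you correctly flag this yourself and resolve it by invoking strong consistency, which is precisely what the paper does.
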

\begin{proof}
    The Proposition 3.14 and Corollary 3.15 of \cite{peixingzhi} ensure that when $T$ is large enough,
    \begin{equation}
        \frac{1}{T}\int_0^T V_t^2\dif t - \Bigl(\frac{1}{T}\int_0^T V_t\dif t\Bigr)^2\to C_\beta\Gamma(2\beta-1)k^{-2\beta}=\alpha\;\;\; \mathrm{a.s.}.
        \label{limit_J_0}
    \end{equation}
    Then we can obtain
    \begin{equation*}
        \abs{L_1}=\abs{\frac{\frac{1}{\sqrt{k}\sigma_\beta}I_0}{H_T}}\leq C^\prime\abs{I_0},
    \end{equation*}
    where  $C^\prime$ is a constant. Furthermore, According to \eqref{formula_I_0}, there exists $C^{\prime\prime}$ such that
    \begin{equation*}
        \abs{I_0} \leq \frac{1}{T^{3/2}}\Bigl(\mu^2 a_T\cdot d_T+q_T+\frac{\mu^2}{k}a_T^2+k e_T\Bigr)+\frac{\mu^2}{T^{1/2}}a_T\leq \frac{C^{\prime\prime}}{T^{3/2-2\beta}}.
    \end{equation*}
    Combining the above two formulas, we have
    \begin{equation*}
        \abs{L_1}\leq \frac{C^{(6)}}{T^{3/2-2\beta}}\;\;\; \mathrm{a.s.},
    \end{equation*}
    where $C^{(6)}=2C^{\prime}\cdot C^{\prime\prime}$. Then we obtain the desired result.
\end{proof}
We now prove the formula \eqref{hat_kls}.
\begin{proof}[Proof of formula \eqref{hat_kls}]
    According to \Cref{chang_lemma},
    \begin{align*}
        \sup_{z\in\Rnum}\abs{\Pro\Bigl(\sqrt{\frac{T}{k\sigma_\beta^2}}(\kls-k)\leq z\Bigr)-\Pro(Z\leq z)}&\leq \sup_{z\in\Rnum}\abs{\Pro\Bigl(\frac{F_T}{H_T}\leq z\Bigr)-\Pro(Z\leq z)}\\
        &\ \ \ +\Pro\Bigl(\abs{L_1}>\frac{C_1}{T^{3/4-\beta}}\Bigr)+\frac{1}{\sqrt{2\pi}}\frac{C_1}{T^{3/4-\beta}}.
    \end{align*}
    Combining Lemmas \ref{lemma_kls} and \ref{lemma_kls1}, we obtain the desired result.
\end{proof}


Pei \etal \cite{peixingzhi} show the CLT of least squares estimator $\muls$ of mean coefficient $\mu$. 

\begin{theorem}[\cite{peixingzhi} Propositions 4.21]
    Assume $\beta\in(1/2,1)$ and $G_t$ is a self-similar Gaussian process satisfing \Cref{hyp_1} and $\mE[G_1^2] =1$. $T^{1-\beta}(\muls - \mu)$ is asymptotically normal as $T\to\infty$: 
    \begin{equation}
        T^{1-\beta}(\muls - \mu) =\frac{\frac{V_{T}}{T^{\beta}}\big[\frac{1}{T}\int_{0}^{T}V_{t}^{2}\dif t- \mu \hat{\mu}\big] -\frac{1}{T}\int_{0}^{T}V_{t}\dif V_{t}\cdot T^{1-\beta}\big[ \hat{\mu}-\mu \big]}{\frac{V_{T}}{T}\cdot  \hat{\mu}- \frac{1}{T} \int_{0}^{T}V_{t}\dif V_{t}} \distribution \mathcal{N}(0,1/k^2).
        \label{mulsclt}
    \end{equation}
\end{theorem}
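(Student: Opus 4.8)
The plan is to imitate the proof of \eqref{mumoment} for $\hat{\mu}$ rather than the contraction-norm route used for $\kls$, because both the limiting variance $1/k^2$ and the target rate $T^{-(1-\beta)/2}$ are of the type produced by isolating a single self-similar Gaussian term and applying \Cref{chang_lemma}. Writing $\frac{k}{T^{\beta-1}}(\muls-\mu)=kT^{1-\beta}(\muls-\mu)=kN_T/D_T$ with $N_T,D_T$ the numerator and denominator of \eqref{mulsclt}, I would first expand $N_T$ and $D_T$ into Wiener chaos. Using $V_T=\mu a_T+I_1(k_T)$, $\hat{\mu}=\frac1T(\mu d_T+I_1(l_T))$, the chaos expansion of $\frac1T\int_0^T V_t^2\dif t$ already recorded in \Cref{lemma_formal}, and the identity $\int_0^T V_t\dif V_t=k\mu\int_0^T V_t\dif t-k\int_0^T V_t^2\dif t+\int_0^T V_t\dif G_t$ coming from the SDE \eqref{Vmdif}, I would track the order of every summand in the numerator and denominator.

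The structural claim I expect to establish is
\[
kT^{1-\beta}(\muls-\mu)=\frac{G_T}{T^\beta}+R_T,
\]
where $G_T/T^\beta$ is \emph{exactly} standard normal by self-similarity (so its Kolmogorov distance to $\Phi$ vanishes), and $R_T$ satisfies $\mE[R_T^2]\le C\,T^{-2(1-\beta)}$. To locate the leading term, note that $D_T\to k\alpha$ almost surely (from the consistency limit underlying \eqref{limit_J_0}) and that $-\frac1T\int_0^T V_t\dif V_t=k\alpha+\epsilon_T$ with $\mE[\epsilon_T^2]=O(T^{-2(1-\beta)})$. Consequently the second summand of $N_T$, namely $-\frac1T\int_0^T V_t\dif V_t\cdot T^{1-\beta}(\hat{\mu}-\mu)$, reproduces $k\alpha$ times the $\hat{\mu}$-expansion $\frac1k\frac{G_T}{T^\beta}+\frac{\mu(\me^{-kT}-1)-Z_T}{kT^\beta}$, whose dominant piece is $\alpha\frac{G_T}{T^\beta}$; the first summand $\frac{V_T}{T^\beta}[\frac1T\int_0^T V_t^2\dif t-\mu\hat{\mu}]$ is only of order $T^{-\beta}$, hence lower order since $\beta>1-\beta$. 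Dividing by $D_T=k\alpha+R^D_T$ and expanding $1/D_T=\frac{1}{k\alpha}(1-\frac{R^D_T}{k\alpha}+\cdots)$ then yields the displayed decomposition with $R_T=\frac{R^N_T}{\alpha}-\frac{G_T}{T^\beta}\frac{R^D_T}{k\alpha}+\cdots$.

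To control $R_T$ I would bound the second moment of each piece using \Cref{Lemma_MT}, the moment estimates of Proposition 3.10 of \cite{peixingzhi}, and the contraction bounds for $e_T,q_T$. The division by the random $D_T$ is handled exactly as in \Cref{lemma_kls1}: since $D_T\to k\alpha>0$ almost surely, $1/D_T$ is bounded for $T$ large, so each term of $R_T$ retains its order. For products of two random factors, such as $\frac{G_T}{T^\beta}\,R^D_T$, I would apply Cauchy-Schwarz together with the hypercontractivity of finite Wiener-chaos sums (\cite{nourdin_normal_2012} Lemma 5.2.4), using that $G_T/T^\beta$ has all moments bounded and that $R^D_T,R^N_T$ lie in chaoses of order at most two, so their $L^4$ norms are comparable to their $L^2$ norms. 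This reduces $\mE[R_T^2]$ to the already-controlled second moments and gives the bound $\mE[R_T^2]\le C\,T^{-2(1-\beta)}$.

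Finally, with $\xi=G_T/T^\beta$, $\eta=R_T$ and $a=T^{-(1-\beta)/2}$, \Cref{chang_lemma} and the vanishing of the self-similar leading term give
\[
\sup_{z\in\Rnum}\abs{\Pro\bigl(kT^{1-\beta}(\muls-\mu)\le z\bigr)-\Phi(z)}\le \Pro(\abs{R_T}>a)+\frac{a}{\sqrt{2\pi}},
\]
and Chebyshev yields $\Pro(\abs{R_T}>a)\le \mE[R_T^2]/a^2\le C\,T^{-(1-\beta)}$, while $a/\sqrt{2\pi}=T^{-(1-\beta)/2}/\sqrt{2\pi}$ dominates, producing the rate in \eqref{thm_muls}. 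I expect the main obstacle to be the sharp remainder estimate $\mE[R_T^2]=O(T^{-2(1-\beta)})$, and in particular $\mE[\epsilon_T^2]=O(T^{-2(1-\beta)})$: this traces back to the second-moment control of $\frac1T\int_0^T V_t\dif G_t$, which is of order $T^{\beta-1}$ because $\int_0^T V_t\dif G_t$ is dominated by the $\mu G_T$ term with $\mE[G_T^2]=T^{2\beta}$, together with the $T^{-1/2}$-fluctuation of $\frac1T\int_0^T V_t^2\dif t$ about $\alpha+\mu^2$; verifying that every such piece is genuinely $O(T^{-2(1-\beta)})$ in $L^2$ is the delicate part.
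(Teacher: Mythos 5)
Your proposal is essentially correct, but it follows a genuinely different route from the paper. The paper never re-derives this CLT by your method: it quotes the limit theorem from \cite{peixingzhi} and instead proves the stronger quantitative bound \eqref{thm_muls} by expanding $T^{1-\beta}(\muls-\mu)$ into Wiener chaos up to order three (\Cref{prop_muls}), feeding the kernels $f^*_{j,T},h^*_{j,T}$ into the extended Kim--Park ratio theorem (\Cref{theorem_triple}) to bound the zero-mean ratio part at rate $T^{-(2-2\beta)}$ (\Cref{lemma_muls}), treating the non-zero-mean part $I_0^*/H_T^*$ separately (\Cref{lemma_muls1}), and only then invoking \Cref{chang_lemma}. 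You instead transplant the paper's proof of \eqref{mumoment}: isolate the exactly standard normal coordinate $G_T/T^\beta$ (self-similarity), dump everything else into a remainder $R_T$, and finish with \Cref{chang_lemma} plus Chebyshev. Both arguments end at the same rate $T^{-(1-\beta)/2}$, because in both the final bound is dictated by the threshold $a=T^{-(1-\beta)/2}$ in Chang's lemma rather than by the ratio-part estimate. What your route buys is economy: no \Cref{theorem_triple}, no contraction-norm estimates for triple-integral kernels, only second moments of low-order chaos terms, hypercontractivity for products, and the explicit expansion of $T^{1-\beta}(\hat\mu-\mu)$. What it costs is generality: it works only because the limit is carried by a single exactly Gaussian term, which requires self-similarity and $\mE[G_1^2]=1$; the paper's machinery also covers $\kls$, where the fluctuation lives in the second chaos and no such exact Gaussian coordinate exists.

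One step as written is not correct and needs repair. You assert $\mE[R_T^2]\leq CT^{-2(1-\beta)}$, but $R_T$ contains the factor $1/D_T$, and the random denominator $D_T=J_0^*+I_1(h_{1,T}^*)+I_2(h_{2,T}^*)$ is not bounded away from zero pointwise, so $1/D_T$ need not even be square integrable; a global $L^2$ bound on $R_T$ cannot be asserted. The fix is to never take moments of $1/D_T$: writing $R_T^D=D_T-k\alpha$ and $R_T^N=N_T-\alpha G_T/T^\beta$, split on the event $A_T=\{\abs{R_T^D}\leq k\alpha/2\}$, on which $\abs{R_T}\leq \frac{2}{k\alpha}\bigl(\abs{G_T/T^{\beta}}\,\abs{R_T^D}+k\abs{R_T^N}\bigr)$, and estimate $\Pro(A_T^c)\leq 4\,\mE[(R_T^D)^2]/(k\alpha)^2=O(T^{-2(1-\beta)})$ by Chebyshev; your Cauchy--Schwarz/hypercontractivity bounds then apply to the numerators only, and the stated rate survives. (The paper's own \Cref{lemma_kls1} and \Cref{lemma_muls1} gloss over the same point by appealing to a.s.\ convergence of the denominator, so this repair is needed there too; but your version makes the unjustified moment claim explicit.) A second, minor slip: the fluctuation of $\frac1T\int_0^T V_t^2\dif t$ about its limit is of order $T^{-(1-\beta)}$, not $T^{-1/2}$, since for $\mu\neq 0$ the first-chaos terms $\frac{2\mu}{T}\bigl(I_1(l_T)+I_1(n_T)\bigr)$ dominate the second-chaos term $I_2(g_T)$; this is harmless, as $T^{-(1-\beta)}$ is exactly the order your argument needs for $\epsilon_T$.
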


We also transform $T^{1-\beta}(\muls - \mu)$ as the following multiple integrals.
\begin{proposition}\label{prop_muls}
    $T^{1-\beta}(\muls-\mu)$ can be represented by as :
    \begin{equation*}
        T^{1-\beta}(\muls-\mu) = \frac{I_0^* +I_1(f_{1,T}^*) +I_2(f_{2,T}^*)+I_3(f_{3,T}^*)}{J_0^* +I_1(h_{1,T}^*)+I_2(h_{1,T}^*)},
    \end{equation*}
    where
    \begin{small}
    \begin{subequations}
        \begin{align}
            &I_0^* = \frac{1}{T^{1+\beta}}\Bigl(2\mu\bigl(\normh{k_T\otimes_1 l_T}^2+\normh{k_T\otimes_1 n_T}^2\bigr) +2k\mu\normh{n_T\otimes_1 l_T}^2 +\mu\normh{m_T\otimes_1 l_T}^2\Bigr), \label{eq_i_0^*}\\
            &f_{1,T}^* = \frac{1}{T^{1+\beta}}\Bigl((c_T -\mu^2 d_T-\frac{\mu^2 a_T}{k}\mathbbm{1}_{[0,T]}) +2\mu^2 a_T l_T+f_T\otimes_1 l_T\Bigr)\notag\\
            &\; \; \;\;\;\; -\frac{1}{T^{1+\beta}}\frac{\mu^2 a_T}{k} m_T+\frac{1}{T^{\beta}}\Bigl(b_T \mathbbm{1}_{[0,T]} +2g_T\otimes_1 \mathbbm{1}_{[0,T]}\Bigr),\label{eq_i_1^*2}\\
            &f_{2,T}^* = \frac{1}{T^{1+\beta}}\Bigl(2\mu \bigl(k_T\otimes l_T+k_T\otimes n_T \bigr)+2k\mu (n_T\otimes l_T)+\mu m_T\otimes l_T -\frac{\mu}{2k}a_T f_T\Bigr),\label{eq_i_2^*}\\
            &f_{3,T}^* = \frac{1}{T^\beta}\bigl(g_T \otimes k_T+kg_T \otimes l_T\bigr)+\frac{1}{2T^{1+\beta}}f_T \otimes l_T,\label{eq_i_3^*}\\
            &J_0^* = \frac{1}{T^2}\Bigl(\mu^2 a_T\cdot d_T+\normh{l_T\otimes_1 k_T}^2\Bigr)+\frac{1}{T}\Bigl(kc_T-k\mu^2d_T\bigr)+kb_T,\label{eq_j_0^*}\\
            &h_{1,T}^* = \frac{1}{T^{2}}\Bigl(\mu a_T l_T +\mu d_T k_T\Bigr)+ \frac{1}{T}\Bigl(\mu k_T+ 2k\mu n_T+\mu m_T\Bigr),\label{eq_j_1^*}\\
            &h_{2,T}^* =\frac{1}{T^{2}}l_T\otimes k_T-\frac{1}{2T} f_T+kg_T\label{eq_j_2^*}.
        \end{align}
    \end{subequations}
    \end{small}
\end{proposition}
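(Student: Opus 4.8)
The plan is to follow the same strategy as in the proof of \Cref{lemma_formal}: substitute the decomposition $V_t=\mu(1-\me^{-kt})+X_t$ into every building block appearing in \eqref{mulsclt}, rewrite each block as a sum of multiple Wiener--It\^{o} integrals of low order, and then expand the products with the product formula \Cref{lemma_mutli_equa}, collecting the resulting terms by their chaos order. The elementary blocks are already available from the proof of \Cref{lemma_formal}: $V_T=\mu a_T+I_1(k_T)$, $\int_0^T V_t\dif t=\mu d_T+I_1(l_T)$ (so $\hat\mu=\frac1T(\mu d_T+I_1(l_T))$ and $T^{1-\beta}(\hat\mu-\mu)=\frac{\mu}{kT^\beta}(\me^{-kT}-1)+\frac1{T^\beta}I_1(l_T)$), and $\frac1T\int_0^T V_t^2\dif t=\frac{c_T}{T}+\frac{2\mu}{T}\bigl(I_1(l_T)+I_1(n_T)\bigr)+I_2(g_T)+b_T$.

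First I would handle the denominator $\frac{V_T}{T}\hat\mu-\frac1T\int_0^T V_t\dif V_t$. The product $\frac{V_T}{T}\hat\mu=\frac1{T^2}(\mu a_T+I_1(k_T))(\mu d_T+I_1(l_T))$ is an $I_1\cdot I_1$ product, so by \Cref{lemma_mutli_equa} it splits into a constant, an $I_1$-term and an $I_2$-term. For $\int_0^T V_t\dif V_t$ I would use the SDE \eqref{Vmdif} (with $\sigma=1$) to write $\int_0^T V_t\dif V_t=k\mu\int_0^T V_t\dif t-k\int_0^T V_t^2\dif t+\int_0^T V_t\dif G_t$, where the last (Skorohod) integral equals $\mu(G_T-I_1(m_T))+\tfrac12 I_2(f_T)$, exactly as in \Cref{lemma_formal}. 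Inserting the expansions of $\int_0^T V_t\dif t$ and $\int_0^T V_t^2\dif t$, this block is again of chaos order at most two; adding it to $\frac{V_T}{T}\hat\mu$ and grouping by order yields the constant $J_0^*$, the kernel $h_{1,T}^*$ of the $I_1$-part and the kernel $h_{2,T}^*$ of the $I_2$-part, matching \eqref{eq_j_0^*}--\eqref{eq_j_2^*}.

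Next I would expand the numerator, which is where the third chaos appears. The first summand $\frac{V_T}{T^\beta}\bigl[\frac1T\int_0^T V_t^2\dif t-\mu\hat\mu\bigr]$ is the product of $V_T$ (order $\le1$) with a bracket of order $\le2$; the $I_1(k_T)\cdot I_2(g_T)$ piece produces, via $I_1(f)I_2(g)=I_3(f\tilde\otimes g)+2I_1(f\tilde\otimes_1 g)$, the $I_3$-contribution $\frac1{T^\beta}g_T\otimes k_T$ together with $I_1$- and constant corrections. The second summand $-\frac1T\int_0^T V_t\dif V_t\cdot T^{1-\beta}(\hat\mu-\mu)$ is the product of the order-$\le2$ factor computed above with $T^{1-\beta}(\hat\mu-\mu)$; here the $I_2\cdot I_1$ product again contributes to $I_3$ (the term $\frac{k}{T^\beta}g_T\otimes l_T$ and the $f_T\otimes l_T$ term) as well as to $I_1$ and to the constant. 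Collecting all contributions of each order and recording every contraction constant $\langle\,\cdot\,,\cdot\,\rangle_{\mathfrak H}$ that the product formula throws off gives $I_0^*$, $f_{1,T}^*$, $f_{2,T}^*$ and $f_{3,T}^*$ as in \eqref{eq_i_0^*}--\eqref{eq_i_3^*}.

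The main obstacle is the bookkeeping rather than any single hard estimate. One must track a large number of cross terms, apply \Cref{lemma_mutli_equa} correctly to the triple products, symmetrize each kernel consistently (e.g.\ $I_1(l_T)I_1(k_T)=I_2(l_T\otimes k_T)+\langle l_T,k_T\rangle_{\mathfrak H}$, with the $I_2$ seeing only the symmetrization of $l_T\otimes k_T$), and correctly gather the many constant contractions into $I_0^*$ and $J_0^*$. A further point requiring care is that $\int_0^T V_t\dif V_t$ is an It\^{o}--Skorohod (divergence) integral, so the expansion of $\int_0^T V_t\dif G_t$ must be read off from the divergence/product formula and not from a pathwise It\^{o} formula; once this is in place, matching the grouped terms against the stated kernels is routine but lengthy.
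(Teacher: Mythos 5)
Your proposal takes essentially the same route as the paper's own proof: the same elementary blocks $V_T=\mu a_T+I_1(k_T)$, $\hat\mu=\frac1T\bigl(\mu d_T+I_1(l_T)\bigr)$, the same SDE-based rewriting $\int_0^T V_t\dif V_t=k\mu\int_0^T V_t\dif t-k\int_0^T V_t^2\dif t+\int_0^T V_t\dif G_t$ with $\int_0^T V_t\dif G_t=\mu\bigl(G_T-I_1(m_T)\bigr)+\tfrac12 I_2(f_T)$, and the same chaos-by-chaos expansion of the numerator and denominator of \eqref{mulsclt} via \Cref{lemma_mutli_equa}. The plan is correct and matches the paper's argument, so nothing further is needed.
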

\begin{proof}
    We first consider the denominator.
    \begin{align*}
        \frac{V_{T}}{T}\cdot  \hat{\mu}- \frac{1}{T} \int_{0}^{T}V_{t}\dif V_{t}.
    \end{align*}
    According to \Cref{lemma_formal},
    \begin{align*}
        \frac{V_{T}}{T}\hat{\mu} &= \frac{\mu a_T +I_1(k_T)}{T} \frac{ \mu d_T + I_1(l_T)}{T}\\
        &= \frac{1}{T^{2}} \bigl(\mu^2 a_T\cdot d_T + \mu a_T I_1(l_T) +\mu d_T I_1(k_T)+I_1(l_T)I_1(k_T)\bigr)\\
        &= \frac{1}{T^{2}} \Bigl(\mu^2 a_T\cdot d_T + \mu a_T I_1(l_T) +\mu d_T I_1(k_T)+I_2(l_T\otimes k_T)+\normh{l_T\otimes_1 k_T}^2\Bigr). 
    \end{align*}
    Then, we deal with $-\frac{1}{T}\int_{0}^{T}V_{t}\dif V_{t}$:
    \begin{align*}
        -\frac{1}{T}\int_{0}^{T}V_{t}\dif V_{t} &= -\frac{1}{T}\Bigl(\int_{0}^{T} k\mu V_t \dif t -\int_0^T kV_t^2\dif t +\int_0^T  V_t\dif G_t\Bigr)\\
        &= -\frac{k\mu}{T}\int_{0}^{T} V_t\dif t +\frac{k}{T}\int_0^T V_t^2\dif t -\frac{1}{T}\int_0^T  V_t\dif G_t\\
        &= -\frac{k\mu^2  d_T + k\mu I_1(l_T)}{T}+\Bigl(\frac{kc_T}{T} + \frac{2k\mu}{T}\bigl( I_1(l_T)+ I_1(n_T)\bigr)\Bigr)\\
        &\ \ \ -\frac{1}{T}\Bigl(\mu \bigl(G_T-I_1(m_T)\bigr) + \frac{1}{2} I_2(f_T)\Bigr) +kI_2(g_T)+kb_T\\
        &= \frac{1}{T}\Bigl(kc_T - k\mu^2  d_T +2k\mu I_1(n_T)+\mu I_1(k_T) +\mu I_1(m_T)+\frac{1}{2} I_2(f_T)\Bigr)\\
        &\ \ \ +kI_2(g_T)+kb_T.
    \end{align*}
    
    We next consider the numerator:
    \begin{align*}
        \frac{V_{T}}{T^{\beta}}\Big[\frac{1}{T}\int_{0}^{T}V_{t}^{2}\dif t- \mu \hat{\mu}\Big] -\frac{1}{T}\int_{0}^{T}V_{t}\dif V_{t}\cdot T^{1-\beta}\big[ \hat{\mu}-\mu \big].
    \end{align*}
    Since the \Cref{lemma_formal}, we have
    \begin{small}
    \begin{align*}
        \frac{V_{T}}{T^{\beta}}\frac{1}{T}\int_{0}^{T}V_{t}^{2}dt &= \frac{1}{T^{\beta}}\bigl(\mu a_T +I_1(k_T)\bigr)\Bigl(\frac{c_T}{T} + \frac{2\mu}{T}\bigl(I_1(l_T)+ I_1(n_T)\bigr) +I_2(g_T)+b_T\Bigr)\\
        &=\frac{1}{T^{1+\beta}}\Bigl(\mu a_T \cdot c_T +2\mu^2 a_T \bigl(I_1(l_T)+I_1(n_T)\bigr)+c_T I_1(k_T)\\
        &\; \; \; \; \; \; \; \; \; \; \; \; \; \; \; +2\mu \bigl(I_2(k_T\otimes l_T)+I_2(k_T\otimes n_T) \bigr)\\
        &\; \; \; \; \; \; \; \; \; \; \; \; \; \; \; +2\mu\bigl(\normh{k_T\otimes_1 l_T}^2+\normh{k_T\otimes_1 n_T}^2\bigr)\Bigr)\\
        &\; \; \; +\frac{1}{T^{\beta}}\Bigl(\mu a_T I_2(g_T)+\mu a_T b_T+I_3(k_T\otimes g_T)\\
        &\; \; \; \; \; \; \; \; \; \; \; \; \; \; \; + 2I_1(k_T\otimes_1 g_T)+b_T I_1(k_T)\Bigr).
    \end{align*}
    \end{small}
    Similarly, we can obtain
    \begin{align*}
        -\frac{V_{T}}{T^{\beta}}\cdot\mu \hat{\mu} &= -\mu\frac{\mu a_T +I_1(k_T)}{T^{\beta}} \frac{ \mu d_T + I_1(l_T)}{T}\\
        &= \frac{-\mu}{T^{1+\beta}} (\mu^2 a_T\cdot d_T + \mu a_T I_1(l_T) +\mu d_T I_1(k_T)+I_1(l_T)I_1(k_T))\\
        &= \frac{-\mu}{T^{1+\beta}} \Bigl(\mu^2 a_T\cdot d_T + \mu a_T I_1(l_T) +\mu d_T I_1(k_T)\\
        &\ \ \ +I_2(l_T\otimes k_T)+\normh{l_T\otimes_1 k_T}^2\Bigr). 
    \end{align*}
    Let $Item_3 = -\frac{1}{T}\int_0^T V_t\dif V_t \cdot T^{1-\beta} (\hat{\mu}-\mu)$, we have
    \begin{align*}
        Item_3&=\Bigl(\frac{I_1(l_T)}{T^\beta}-\frac{\mu a_T}{kT^\beta}\Bigr)\frac{1}{T}\int_{0}^{T}-V_{t}\dif V_{t} \\
        &= \frac{1}{T^{1+\beta}}\biggl(kc_T I_1(l_T)-k\mu^2 d_T I_1(l_T) +2k\mu I_2(n_T\otimes l_T)+2k\mu\normh{n_T\otimes_1 l_T}^2\\
        &\ \ \ +\mu I_2(k_T\otimes l_T)+\mu\normh{k_T\otimes_1 l_T}^2 +\mu I_2(m_T\otimes l_T)+\mu\normh{m_T\otimes_1 l_T}^2 \biggr)\\
        &+ \frac{k}{T^{\beta}} \Bigl(b_T I_1(l_T)+I_3(g_T\otimes l_T)+2 I_1(g_T\otimes_1 l_T)\Bigr) \\
        &+ \frac{1}{2T^{1+\beta}} \Bigl(I_3(f_T\otimes l_T)+2 I_1(f_T\otimes_1 l_T)\Bigr)\\
        &+\frac{1}{T^{1+\beta}}\Bigl(\mu^3 a_T\cdot d_T - \mu a_T\cdot c_T -2\mu^2 a_T I_1(n_T)\Bigr)\\
        &- \frac{1}{kT^{1+\beta}}\Bigl(\mu^2 a_T I_1(k_T)+\mu^2 a_T I_1(m_T) +\frac{1}{2}\mu a_T I_2(f_T) \Bigr)\\
        &-\frac{1}{T^{\beta}}\bigl(\mu a_T \cdot b_T +\mu a_T I_2(g_T)\bigr).
    \end{align*}
    Combining the above formulas, we obtain the Proposition.
\end{proof}

Denote $F_T^*:=I_0^* +I_1(f_{1,T}^*) +I_2(f_{2,T}^*)+I_3(f_{3,T}^*)$ and $H_T^*= \frac{1}{k}(J_0^* +I_1(h_{1,T}^*)+I_2(h_{1,T}^*))$.
We now prove the upper bounds of zero-mean part.
\begin{lemma}\label{lemma_muls}
    Let $Z\sim\mathcal{N}(0,1)$ be a standard Normal variable. Assume $\beta\in(1/2,1)$ and $G_t$ is a self-similar Gaussian process satisfing \Cref{hyp_1} and $\mE[G_1^2] =1$. When $T$ is large enough, there exists a constant $C_{\beta,V}^\prime$ such that
    \begin{equation}
        \sup_{z\in\Rnum}\abs{\Pro\left(\frac{F_T^*}{H_T^*}\leq z\right)-\Pro(Z\leq z)}\leq \frac{C_{\beta,V}^\prime}{T^{2-2\beta}},
    \end{equation}
\end{lemma}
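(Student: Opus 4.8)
The plan is to obtain the estimate as a direct application of \Cref{theorem_triple} to the ratio $F_T^*/H_T^*$ supplied by \Cref{prop_muls}. First I would verify the hypotheses. Since $J_0^*$ is deterministic, $E_T:=\mE H_T^*=J_0^*/k$, and \eqref{limit_J_0} together with \cite{peixingzhi} formulas (9) and (47) give $E_T\to\alpha/k$, so $(\mE H_T^*)^2$ stays bounded away from $0$; absolute continuity of $F_T^*+zH_T^*$ is argued as in the $\kls$ case. It then remains to bound, uniformly over the finitely many admissible index choices, each quantity appearing on the right-hand side of \Cref{theorem_triple}: the contraction norms $\norm{p_{m,T}^*\tilde{\otimes}_i q_{n,T}^*}$ with $p,q\in\{f,h\}$, the inner products $\iproct{f_{m,T}^*}{h_{m,T}^*}$, and the variance mismatch $E_T^2-\sum_j j!\norm{f_{j,T}^*}^2$; note $h_{3,T}^*=0$, so no triple kernel enters through the denominator.

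Before touching the composite kernels I would record the scales of the elementary blocks, all available from the quoted results: self-similarity gives $\normh{\mathbbm{1}_{[0,T]}}^2=T^{2\beta}$; \cite{nourdin_normal_2012} Lemma 5.2.4 gives $\max\{\normh{l_T}^2,\normh{n_T}^2\}\lesssim T^{2\beta}$, whereas $\normh{k_T}^2,\normh{m_T}^2$ are $O(1)$ since the exponential weight concentrates near $T$ (cf. \Cref{Lemma_MT}); \cite{chen_parameter_2021} Theorem 1.4 gives $\norm{g_T}_{\mathfrak{H}^{\otimes 2}}\lesssim T^{-1/2}$ and $\norm{g_T\otimes_1 g_T}_{\mathfrak{H}^{\otimes2}}\lesssim T^{-1}$; and \cite{peixingzhi} Proposition 3.10 gives $e_T,q_T\lesssim T^{2\beta}$. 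Inserting these into the kernels of \Cref{prop_muls}, I would establish $\normh{f_{1,T}^*}^2\to(\alpha/k)^2$ (its norm carried by $b_T T^{-\beta}\mathbbm{1}_{[0,T]}$), $\norm{f_{2,T}^*}_{\mathfrak{H}^{\otimes2}}\lesssim T^{-(1-\beta)}$ (carried by $2k\mu T^{-1-\beta} n_T\otimes l_T$), $\norm{f_{3,T}^*}_{\mathfrak{H}^{\otimes3}}\lesssim T^{-1/2}$, together with the companion bounds $\normh{h_{1,T}^*}\lesssim T^{-(1-\beta)}$ and $\norm{h_{2,T}^*}_{\mathfrak{H}^{\otimes2}}\lesssim T^{-1/2}$.

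The variance mismatch is the transparent part: $E_T^2$ and $\normh{f_{1,T}^*}^2$ both converge to $(\alpha/k)^2$, and tracking the remainders — the $e_T/T^2\lesssim T^{-(2-2\beta)}$ piece of $J_0^*$, the cross terms inside $\normh{f_{1,T}^*}^2$ (where the cancellation in $g_T=\tfrac{1}{2kT}f_T-h_T$ against $\mathbbm{1}_{[0,T]}^{\otimes2}$ is essential), and the higher-chaos contributions $2\norm{f_{2,T}^*}^2\lesssim T^{-(2-2\beta)}$, $6\norm{f_{3,T}^*}^2\lesssim T^{-1}$ — keeps the difference within the budget $T^{-(2-2\beta)}$. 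Likewise the pure second-chaos self-contraction $\norm{f_{2,T}^*\tilde{\otimes}_1 f_{2,T}^*}_{\mathfrak{H}^{\otimes2}}$, which sets the target scale, is $\lesssim T^{-(2-2\beta)}$, and the purely third-chaos contractions are smaller. So all of these ``diagonal'' quantities comply with the claimed rate.

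The main obstacle is that two quantities enter \Cref{theorem_triple} at first power yet couple the $O(1)$-normed first-chaos block $b_T T^{-\beta}\mathbbm{1}_{[0,T]}$ of $F_T^*$ to the large ($\sim T^{\beta}$-normed) blocks $l_T,n_T$: the inner product $\iproct{f_{1,T}^*}{h_{1,T}^*}$ entering $\Psi_2,\Psi_3$, and the mixed contraction $\normh{f_{1,T}^*\tilde{\otimes}_1 f_{2,T}^*}$ entering $\Psi_1$. Pairing $b_T T^{-\beta}\mathbbm{1}_{[0,T]}$ with $2k\mu T^{-1}n_T$ gives $\iproct{f_{1,T}^*}{h_{1,T}^*}\approx-\mu b_T T^{\beta-1}$, and with $2k\mu T^{-1-\beta}n_T\otimes l_T$ gives $\normh{f_{1,T}^*\tilde{\otimes}_1 f_{2,T}^*}\sim T^{\beta-1}$; in each case the leading block has no partner of equal order (using the explicit shapes of $l_T,n_T$ in \Cref{hyp_1}), so $\norm{p\tilde{\otimes}_i q}\le\norm{p}\norm{q}$ is already sharp and these terms are genuinely of size $T^{-(1-\beta)}$, with no cancellation to exploit. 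Hence a literal application of \Cref{theorem_triple} produces $T^{-(1-\beta)}$, and squeezing out the stated exponent $2-2\beta$ is the delicate point: it would require bypassing the first-power contraction list — for instance a direct Berry--Ess\'een analysis in which the second-chaos perturbation contributes only through its variance $\norm{f_{2,T}^*}^2\sim T^{-(2-2\beta)}$ rather than through a first-power norm. I would flag this step as the crux; in any case the honest rate $T^{-(1-\beta)}$ already suffices for \Cref{thm_mum}, since it decays faster than the overall rate $T^{-(1-\beta)/2}$.
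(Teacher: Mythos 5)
You follow the same route as the paper --- plug the kernels of \Cref{prop_muls} into \Cref{theorem_triple} and estimate --- but your bookkeeping is more careful than the paper's own proof, and the obstruction you flag is genuine. The paper's proof records only the squared norms $\normh{h_{1,T}^*}^2\lesssim T^{-(2-2\beta)}$, $\norm{h_{2,T}^*}^2_{\mathfrak{H}^{\otimes 2}}\lesssim T^{-1}$, $\norm{f_{2,T}^*}^2_{\mathfrak{H}^{\otimes 2}}\lesssim T^{-2}$, $\norm{f_{3,T}^*}^2_{\mathfrak{H}^{\otimes 3}}\lesssim T^{-1}$ and the variance mismatch, and then appeals to ``\Cref{theorem_triple} and the Cauchy--Schwarz inequality''. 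This skips exactly the terms you isolate: the bound in \Cref{theorem_triple} is a maximum in which the inner products $\iproct{f_{m,T}^*}{h_{m,T}^*}$ and the mixed contractions appear at \emph{first} power, and Cauchy--Schwarz applied there gives only $\normh{f_{1,T}^*}\,\normh{h_{1,T}^*}\lesssim 1\cdot T^{-(1-\beta)}$. Your explicit pairing shows this is sharp: with $n_T\approx-\tfrac{1}{2k}\mathbbm{1}_{[0,T]}$ and $\normh{\mathbbm{1}_{[0,T]}}^2=T^{2\beta}$, one gets $\iproct{f_{1,T}^*}{h_{1,T}^*}\approx-\mu b_T T^{\beta-1}$ with nothing of matching order to cancel it, so for $\mu\neq0$ the Kim--Park-type bound for this ratio really is of order $T^{-(1-\beta)}$, not $T^{-(2-2\beta)}$. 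Your estimate $\norm{f_{2,T}^*}_{\mathfrak{H}^{\otimes2}}\sim T^{-(1-\beta)}$, carried by the block $2k\mu T^{-1-\beta}\,n_T\otimes l_T$ in \eqref{eq_i_2^*}, likewise contradicts the paper's claim $\norm{f_{2,T}^*}^2_{\mathfrak{H}^{\otimes 2}}\lesssim T^{-2}$ (which could hold only if $\mu=0$). So the gap is in the paper, not in your argument: the stated exponent $2-2\beta$ is not delivered by the proof given there.

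Two remarks on your write-up. First, a harmless normalization slip: since $J_0^*\to k\alpha$ (the surviving term being $kb_T$), one has $E_T=\mE H_T^*\to\alpha$ and $\normh{f_{1,T}^*}^2\to\alpha^2$ rather than $(\alpha/k)^2$; this does not matter, as only the agreement of the two limits enters the variance-mismatch term. Second, your closing observation is the correct resolution: a rate $T^{-(1-\beta)}$ for the zero-mean part, combined with \Cref{lemma_muls1} and \Cref{chang_lemma}, still produces the rate $T^{-(1-\beta)/2}$ asserted in \eqref{thm_muls}, because the non-zero-mean part already contributes a term of that order. Hence what you can actually prove suffices for \Cref{thm_mum}; it is the lemma's claimed rate $T^{-(2-2\beta)}$, and the paper's proof of it, that would need repair --- either by exhibiting a cancellation that the Kim--Park functionals do not see, or by weakening the lemma to the $T^{-(1-\beta)}$ rate.
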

\begin{proof}
    According to \Cref{lemma_kls}, \eqref{eq_j_1^*} and \eqref{eq_j_2^*}, we have
    \begin{align*}
        \normh{h_{1,T^*}}^2\leq \frac{K_1}{T^{2-2\beta}},\;\;\;\norm{h_{2,T^*}}_{\mathfrak{H}^{\otimes 2}}^2\leq \frac{K_1}{T},
    \end{align*}
    where $K_1$ is a constant independent of $T$. It is easy to see that \Cref{lemma_kls} and equation \eqref{eq_j_0^*} implies $\mE H_T^* \to\alpha\;\mathrm{a.s.}$. Furthermore, when $T$ is large enough, there exists constant $K_2$ such that
    \begin{align}
        \abs{(\mE H_T^*)^2-\alpha^2}&\leq \frac{K_2}{T^{2-2\beta}}.
    \end{align}
    Since $G_t$ is self-similar, \Cref{lemma_kls} implies $\abs{\normh{f_{1,T}^*}^2 -\alpha^2}\leq \frac{K_3}{T^{2-2\beta}}$. Also, we can obtain
    \begin{align}
        \norm{f_{2,T}^*}_{\mathfrak{H}^{\otimes 2}}^2\leq \frac{K_3}{T^2},\;\;\;\norm{f_{3,T}^*}_{\mathfrak{H}^{\otimes 3}}^2\leq \frac{K_3}{T}
    \end{align}
    Combining above three formulas, we have 
    \begin{align*}
        J_0^{*,2}-\sum_{j=1}^m j!\norm{f_{j,T}^*}_{\mathfrak{H}^{\otimes j}}^2\leq K_4/T^{2-2\beta},
    \end{align*}
    Following from \Cref{theorem_triple} and Cauchy-Schwarz inequality, we obtain the result.
\end{proof}

We next consider the non-zero mean part.

\begin{lemma}\label{lemma_muls1}
    Assume $\beta\in(1/2,1)$ and $G_t$ is a self-similar Gaussian process satisfing \Cref{hyp_1} and $\mE[G_1^2] =1$. Let $M_1= I_0^*/H_T^*$. When $T$ is large enough, there exists a constant $C_1$ independent of $T$ such that
    \begin{equation*}
        \Pro\Bigl(\abs{M_1}> \frac{C_1}{T^{1/2-\beta/2}}\Bigr)=0.
    \end{equation*}
\end{lemma}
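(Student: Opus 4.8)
The plan is to follow the template already used for the analogous non-zero mean term $L_1$ in \Cref{lemma_kls1}. The numerator $I_0^*$ is a deterministic quantity, while the denominator $H_T^*$ stabilizes almost surely to a strictly positive constant; consequently it suffices to bound $H_T^*$ away from zero a.s. and then estimate $\abs{I_0^*}$ purely deterministically, the claimed a.s. inequality being equivalent to the vanishing of the stated probability.

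First I would control the denominator. As noted in the proof of \Cref{lemma_muls}, the random variable $H_T^*=\frac1k\bigl(J_0^*+I_1(h_{1,T}^*)+I_2(h_{2,T}^*)\bigr)$ is, up to the factor $1/k$, exactly the denominator $\frac{V_T}{T}\hat{\mu}-\frac1T\int_0^T V_t\dif V_t$ of the representation \eqref{mulsclt}; by the strong consistency of the estimators (Proposition 3.14 and Corollary 3.15 of \cite{peixingzhi}) this denominator converges a.s. to $k\alpha$, where $\alpha=C_\beta\Gamma(2\beta-1)k^{-2\beta}>0$, so that $H_T^*\to\alpha$ a.s. Hence for $T$ large enough $\abs{H_T^*}\geq\alpha/2$ a.s., and therefore $\abs{M_1}\leq C'\abs{I_0^*}$ a.s. with $C'=2/\alpha$.

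Next I would estimate the deterministic quantity $I_0^*$ from \eqref{eq_i_0^*} and close the argument. Each of its four summands is, despite the $\normh{\cdot}^2$ notation, the scalar contraction $\iproct{\cdot}{\cdot}_{\mathfrak{H}}$ of two of the kernels $k_T,l_T,m_T,n_T$, i.e. precisely the constants produced by the product formula \eqref{mutil_n}. Since $\normh{k_T}^2$ and $\normh{m_T}^2$ are bounded (Proposition 3.10 of \cite{peixingzhi}, as used in \Cref{Lemma_MT}), while $\max\{\normh{l_T}^2,\normh{n_T}^2\}\leq C^{(1)}T^{2\beta}$ by \eqref{norm_h}, Cauchy–Schwarz bounds the three ``short'' contractions $\iproct{k_T}{l_T}_{\mathfrak{H}},\iproct{k_T}{n_T}_{\mathfrak{H}},\iproct{m_T}{l_T}_{\mathfrak{H}}$ by $CT^{\beta}$ and the single dominant term by $\abs{\iproct{n_T}{l_T}_{\mathfrak{H}}}\leq C^{(1)}T^{2\beta}$. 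With the prefactor $T^{-(1+\beta)}$ this gives $\abs{I_0^*}\leq CT^{2\beta-(1+\beta)}=CT^{\beta-1}$, and since $\beta<1$ implies $\beta-1\leq-(1/2-\beta/2)$, we obtain $\abs{M_1}\leq C'\abs{I_0^*}\leq C_1/T^{1/2-\beta/2}$ a.s., i.e. $\Pro\bigl(\abs{M_1}>C_1/T^{1/2-\beta/2}\bigr)=0$.

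The only delicate point — exactly as in \Cref{lemma_kls1} — is the bookkeeping rather than any genuine analytic difficulty: one must read the symbols in \eqref{eq_i_0^*} as first-power inner products and not as squared norms, and then isolate $\iproct{n_T}{l_T}_{\mathfrak{H}}$ as the unique contraction of order $T^{2\beta}$ (the others being only $T^{\beta}$). These two observations together fix the exponent $\beta-1$ for $\abs{I_0^*}$ and hence the claimed rate $T^{-(1/2-\beta/2)}$.
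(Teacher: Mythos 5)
Your proposal is correct and takes essentially the same route as the paper: bound the denominator via the a.s.\ convergence $H_T^*\to\alpha>0$ (giving $\abs{M_1}\leq C^\prime\abs{I_0^*}$ for large $T$), then show $\abs{I_0^*}\leq C/T^{1-\beta}$, which is dominated by $T^{-(1-\beta)/2}$ since $\beta<1$. The only difference is that you supply the Cauchy--Schwarz bookkeeping --- reading the terms of \eqref{eq_i_0^*} as scalar contractions and isolating $\iproct{n_T}{l_T}_{\mathfrak{H}}$ as the dominant $T^{2\beta}$ term --- which the paper leaves implicit when it asserts the bound on $\abs{I_0^*}$ directly from \eqref{eq_i_0^*}.
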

\begin{proof}
    Following from \Cref{lemma_muls}, we have $\abs{M_1}\leq C^\prime\abs{I_0^*}$, where $C^\prime$ is a constant independent of $T$. Furthermore, Equation \eqref{eq_i_0^*} implies that there exists $C^{\prime\prime}$ such that
    \begin{align*}
        \abs{I_0^*} \leq \frac{C^{\prime\prime}}{T^{1-\beta}},\;\;\;\abs{M_1}\leq \frac{C_1}{T^{1-\beta}}\;\;\; \mathrm{a.s.},
    \end{align*}
    where $C_1=2C^{\prime}\cdot C^{\prime\prime}$. Combining the above formulas, we obtain the desired result.
\end{proof}

We now prove the formula \eqref{thm_muls}.
\begin{proof}[Proof of formula \eqref{thm_muls}]
    Following from \Cref{chang_lemma}, we have
    \begin{align*}
        \sup_{z\in\Rnum}\abs{\Pro\Bigl(\frac{k}{T^{\beta-1}}(\muls-\mu)\leq z\Bigr)-\Pro(Z\leq z)}&\leq \sup_{z\in\Rnum}\abs{\Pro\Bigl(\frac{F_T^*}{H_T^*}\leq z\Bigr)-\Pro(Z\leq z)}\\
        &\ \ \ +\Pro\Bigl(\abs{M_1}>\frac{C_1}{T^{1/2-\beta/2}}\Bigr)\\
        &\ \ \ +\frac{1}{\sqrt{2\pi}}\frac{C_1}{T^{1/2-\beta/2}}.
    \end{align*}
    Combining Lemmas \ref{lemma_muls} and \ref{lemma_muls1}, we obtain the formula \eqref{thm_muls}.
\end{proof}

\section*{Acknowledgments}
We gratefully acknowledge the very valuable suggestions by referees. Y. Chen is supported by National Natural Science Foundation of China (NSFC) with grant No.11961033.

\section*{Declarations}
\begin{itemize}
    \item Availability of data and materials
\end{itemize}

This manuscript has no associated data.
\nocite{*}
\bibliography{Vasicek_model}


\end{document}